\definecolor{cof}{RGB}{219,144,71}
\definecolor{pur}{RGB}{186,146,162}
\definecolor{greeo}{RGB}{91,173,69}
\definecolor{greet}{RGB}{52,111,72}
\newcolumntype{L}[1]{>{\raggedright\let\newline\\\arraybackslash\hspace{0pt}}m{#1}}
\newcolumntype{C}[1]{>{\centering\let\newline\\\arraybackslash\hspace{0pt}}m{#1}}
\newcolumntype{R}[1]{>{\raggedleft\let\newline\\\arraybackslash\hspace{0pt}}m{#1}}
\newtheorem{teo}{Theorem}[section]
\newtheorem{lem}[teo]{Lemma}
\newtheorem{pro}[teo]{Proposition}
\newtheorem{defi}[teo]{Definition}
\newtheorem*{notation}{Notation}
\newtheorem*{claim}{Claim}
\newtheorem*{conj}{Conjecture}
\newtheorem*{Corollary}{Corollary}
\newcommand{\rat}{\mathbb{Q}}
\newcommand{\nat}{\mathbb{N}}
\newcommand{\com}{\mathbb{C}}
\newcommand{\inte}{\mathbb{Z}}
\newcommand{\field}{\mathbf{k}}
\newcommand{\pfield}{\mathbb{F}_p}
\newcommand{\Zp}{\nicefrac{\inte}{p\inte}}
\newcommand{\Gl}[2]{\operatorname{GL}_{#1}\left(#2\right)}
\newcommand{\pseudo}[1]{\operatorname{Pseudo}\left(#1\right)}
\newcommand{\Ab}{\mathbf{Ab}}
\newcommand{\Var}[1]{\mathbf{Var_{#1}}}
\newcommand{\Stck}[1]{\mathbf{Stack_{#1}}}
\newcommand{\hei}[1]{H_{#1}}
\newcommand{\var}[1]{\nicefrac{\mathbb{P}^{#1-1}}{A_{#1}}}
\newcommand{\varns}[1]{X_{#1}}
\newcommand{\open}[1]{U_{#1}}
\newcommand{\centro}{\operatorname{Z}\hei{p}}
\newcommand{\resolutionname}{f}
\newcommand{\representationname}{\rho}
\newcommand{\quotientname}{\pi}
\newcommand{\res}{\stackrel{\resolutionname}{\rightarrow}}
\newcommand{\repr}{\stackrel{\representationname}{\rightarrow}}
\newcommand{\Stab}[2]{\operatorname{Stab}_{#1}(#2)}
\newcommand{\barB}{\widehat{B}}
\newcommand{\Sp}{S_p}%\pi^{-1}(singular point of Y_p) 
\newcommand{\Singu}[1]{\operatorname{\operatorname{Sing}}\left(#1\right)}%\singular point 
\newcommand{\Hom}[3]{\operatorname{H}^{#1}\left(#2; #3\right)}
\newcommand{\Hrel}[4]{\operatorname{H}^{#1}_{#3}\left(#2; #4\right)}
\newcommand{\Homred}[2]{\operatorname{H}^{#1}\left(#2\right)}
\newcommand{\grot}[1]{K_0(#1)}
\newcommand{\grotcom}[1]{\widehat{K_0}(#1)}
\newcommand{\Lo}[1]{L_0(#1)}
\newcommand{\Lclass}{\mathbb{L}}
\newcommand{\eke}[2]{\operatorname{e}_{#1}\left(#2\right)}
\newcommand{\B}[1]{\operatorname{\mathcal{B}} #1}
\newcommand{\cl}{\{\B{G}\}}
\newcommand{\M}{\cellcolor[gray]{0.8}}
\title{The Ekedahl Invariants for finite groups}
\author{Ivan Martino}
\begin{document}
\maketitle

\begin{abstract}
  In 2009 Ekedahl introduced certain cohomological invariants of finite groups
  which are naturally related to the Noether Problem.
  We show that these invariants are trivial for every finite group in $\Gl{3}{\field}$ and for the fifth discrete Heisenberg group $\hei{5}$.
  Moreover in the case of finite linear groups with abelian projective reduction, these invariants satisfy a recurrence relation in a certain Grothendieck group for abelian groups. 
\end{abstract}

% \newpage
% \vspace{5.0cm}
% \textbf{Acknowledgements}\\
% I would like to thank Anders Bj\"orner for being, unexpectedly, my supervisor: without the deep gratitude and consideration I have for him, this manuscript wouldn't exist.
% %
% This manuscript wouldn't be as it is without the incredible and priceless help of Angelo Vistoli.
% 
% Let me thank the friends Bruno Benedetti, Boris Shapiro, Mats Boji and Ralf Fr\"oberg for being close to me during a sad period. 
% 
% Finally, I want to thank Torsten Ekedahl: he has been always an inspiration for me.

%\newpage
%\tableofcontents

%===========================================================
%===========================================================
%\newpage

%Let $V$ be a finite dimension faithful linear representation of a finite group $G$ over an algebraically closed field $\field$ of characteristic zero.

Let $V$ be a finite dimension faithful linear representation of a finite group $G$ over a field $\field$ of characteristic prime to the order of $G$.
Inspired by a work of Bergstr\"om \cite{Bergstrom-pointcount}, Ekedahl in \cite{Ekedahl-inv} and \cite{EkedahlStack} investigated a motivic version of point counting over finite fields.
One application of Ekedahl's results is to study when the equality 
\begin{equation}\label{eq-equality}
    \{\nicefrac{\operatorname{GL}(V)}{G}\}=\{\operatorname{GL}(V)\}
\end{equation}
%$\{\nicefrac{\operatorname{GL}(V)}{G}\}=\{\operatorname{GL}(V)\}$ 
holds in the Kontsevich value ring $\grotcom{\Var{\field}}$ of algebraic $\field$-varieties.
%
%One of the motivations for Ekedahl investigations into $\grotcom{\Var{\field}}$ was to make Motivic versions of point counting over finite fields as for instance can be found in \cite{Bergstrom-pointcount}.

All the known cases, where this equality fails are counterexamples to the Noether Problem.
In the beginning of the last century, Noether \cite{Noether1917} wondered about the rationality of the field extension $\field(V)^G/\field$ for any finite group $G$ and any field $\field$, where $\field(V)^G$ are the invariants of the field of rational functions $\field(V)$ over the regular representation $V$ of $G$. (The Noether Problem can be stated for any arbitrary field, but we will not need the full generality.)

\noindent
The first counterexample, $\rat(V)^{\nicefrac{\inte}{47\inte}}/\rat$, was given by Swan in \cite{Swan1969} and it appeared during 1969.

\noindent
In the 1980s more counterexamples were found: for every prime $p$ Saltman \cite{Saltman1984} and Bogomolov \cite{Bogomolov1988} showed that there exists a group of order $p^9$ and, respectively, of order $p^6$ such that the extension $\com(V)^{G}/\com$ is not rational.  

\noindent
Saltman used the second unramified cohomology group of the field $\com(V)^{G}$, $\operatorname{H}_{nr}^{2}(\com(V)^{G}, \nicefrac{\rat}{\inte})$, as a cohomological obstruction to rationality.
Later, Bogomolov found a group cohomology expression for $\operatorname{H}_{nr}^{2}(\com(V)^{G}, \nicefrac{\rat}{\inte})$ which now bears his name and is denoted by $B_0(G)$.

\noindent
Recently Hoshi, Kang and Kunyavskii investigateed the case where $|G|=p^5$.
They showed that $B_0(G)\neq 0$ if and only if $G$ belongs to the isoclinism family $\phi_{10}$; see \cite{HKM-noether}. 
%
%(We refer to their paper for more details.) 

%It is still not clear if THIS implies THAT as partially conjectured by Ekedahl in ... 
%*DA AGGIUNGERE: congettura di torsten, larsen and luntz question.

In 2009, Ekedahl \cite{EkedahlStack} defines, for every integer $k$, a \textit{cohomological} map
\[
    \operatorname{\mathcal{H}}^k: \grotcom{\Var{\field}}\rightarrow \Lo{\Ab},
\]
where $\Lo{\Ab}$ is the group generated by the isomorphism classes 
$\{G\}$ of finitely generated abelian groups $G$ under the relation $\{A\oplus B\}=\{A\}+\{B\}$. 

\noindent
Let $\mathbb{L}^i$ be the class of the affine space $\mathbb{A}^i_{\field}$ in $\grotcom{\Var{\field}}$. In particular, $\mathbb{L}^0$ is the class of a point $\{*\}=\{\operatorname{Spec}(\field)\}$.
We observe that $\mathbb{L}^i$ is invertible in $\grotcom{\Var{\field}}$.
To define $\operatorname{\mathcal{H}}^k$ on $\grotcom{\Var{\field}}$ is enough to set $\operatorname{\mathcal{H}}^k(\nicefrac{\{X\}}{\mathbb{L}^m})=\{\Hom{k+2m}{X}{\mathbb{Z}}\}$ for every smooth and proper $\field$-variety $X$ (for more details see Section 3 in \cite{Intro-Ekedahl-Invariants}).

The class $\{\B{G}\}$ of the classifying stack of $G$ is an element of $\grotcom{\Var{\field}}$ (see Proposition 2.5.b in \cite{Intro-Ekedahl-Invariants}) and so one can define:
\setcounter{section}{1}\setcounter{teo}{1}
\begin{defi}
  For every integer $i$, the $i$-th \emph{Ekedahl invariant} $\eke{i}{G}$ of the group $G$ is $\operatorname{\mathcal{H}}^{-i}(\{\B{G}\})$ in $\Lo{\Ab}$.
  We say that the Ekedahl invariants of $G$ are \emph{trivial} if $\eke{i}{G}=0$ for all integer $i\neq 0$.
  %(When $\eke{0}{G}=\{\inte\}$ and $\eke{i\neq 0}{G}=0$ we say that the Ekedahl invariants of the group are \emph{trivial}.)
\end{defi}

\noindent
In Proposition 2.5.a of \cite{Intro-Ekedahl-Invariants}, the author rephrases the equality (\ref{eq-equality}) in terms of algebraic stacks, using the expression
\[
  \{\B{G}\}=\frac{\{\nicefrac{\operatorname{GL}(V)}{G}\}}{\{\operatorname{GL}(V)\}}\in\grotcom{\Var{\field}}.
\]
%$\{\B{G}\}=\nicefrac{\{\nicefrac{\operatorname{GL}(V)}{G}\}}{\{\operatorname{GL}(V)\}}$ in $\grotcom{\Var{\field}}$ 
%
Since $\{\operatorname{GL}(V)\}$ is invertible in $\grotcom{\Var{\field}}$, the equation (\ref{eq-equality}) holds if and only if $\{\B{G}\}=\{*\}$ and, if this is the case, then the Ekedahl invariants of $G$ are \emph{trivial}, because $\operatorname{\mathcal{H}}^0(\{*\})=\{\inte\}$ and $\operatorname{\mathcal{H}}^k(\{*\})=0$ for $k\neq 0$.

These new invariants seem a natural generalization of the Bogomolov multiplier $B_0(G)$ because of the following result.
\begin{teo}[Thm 5.1 of \cite{Ekedahl-inv}]\label{thm-ekedahl}
  Assume $\operatorname{char}(\field)=0$.
  If $G$ is a finite group, then $\eke{i}{G}=0$ for every $i<0$, $\eke{0}{G}=\{\mathbb{Z}\}$, $\eke{1}{G}=0$ and $\eke{2}{G}=\{B_0(G)^{\vee}\}$, where $B_0(G)^{\vee}$ is the pontryagin dual of the Bogomolov multiplier of the group $G$.
  
  \noindent
  Moreover, for $i>0$, the invariant $e_i(G)$ is an integer linear combination of classes of finite abelian groups.
\end{teo}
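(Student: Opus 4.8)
The plan is to compute $\operatorname{\mathcal{H}}^{-i}(\{\B{G}\})$ directly from the presentation $\{\B{G}\}=\{\operatorname{GL}(V)/G\}/\{\operatorname{GL}(V)\}$ together with the defining rule $\operatorname{\mathcal{H}}^{k}(\{X\}/\Lclass^{m})=\{\Hom{k+2m}{X}{\inte}\}$ on smooth proper $X$. First I would exploit that $\operatorname{GL}(V)$ is a \emph{special} group: over a field of characteristic zero its complex points are homotopy equivalent to a compact Lie group whose integral cohomology is a torsion-free exterior algebra, and $\{\operatorname{GL}(V)\}$ is a product of factors $(\Lclass^{j}-1)$. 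Dividing by $\{\operatorname{GL}(V)\}$ in $\grotcom{\Var{\field}}$ should therefore strip off exactly these exterior generators, so that $\operatorname{\mathcal{H}}^{-i}(\{\B{G}\})$ is governed by the integral cohomology of the free quotient $G\backslash\operatorname{GL}(V)$ with the exterior contribution removed. The technical heart of this reduction is that $\operatorname{\mathcal{H}}^{k}$ is well defined on all of $\grotcom{\Var{\field}}$ via smooth proper models (using resolution of singularities and the blow-up relations for cohomology), so I must rewrite the above quotient as a $\inte[\Lclass^{-1}]$-combination of classes of smooth proper varieties before reading off cohomology.

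Next I would analyse the Cartan--Leray spectral sequence of the covering $\operatorname{GL}(V)\to G\backslash\operatorname{GL}(V)$. Since $\operatorname{GL}(V)$ is connected, $G$ acts trivially on $\Hom{q}{\operatorname{GL}(V)}{\inte}$, so the spectral sequence reads $\Hom{p}{G}{\inte}\otimes\Hom{q}{\operatorname{GL}(V)}{\inte}\Rightarrow\Hom{p+q}{G\backslash\operatorname{GL}(V)}{\inte}$. After cancelling the torsion-free exterior factor coming from $\operatorname{GL}(V)$, the surviving contribution in total degree $p$ is the group cohomology $\Hom{p}{G}{\inte}$. This immediately yields the low-degree values: $\Hom{0}{G}{\inte}=\inte$ gives $\eke{0}{G}=\{\inte\}$; $\Hom{1}{G}{\inte}=\operatorname{Hom}(G,\inte)=0$ gives $\eke{1}{G}=0$; and, because integral group cohomology of a finite group is concentrated in non-negative degrees and is finite in every positive degree, one gets $\eke{i}{G}=0$ for $i<0$ and, for $i>0$, an integer linear combination of classes of finite abelian groups. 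The vanishing for $i<0$ is the statement that $\operatorname{\mathcal{H}}^{k}$ applied to this class vanishes for $k>0$, which follows from the amplitude of the class once the normalization $\operatorname{\mathcal{H}}^{0}(\{*\})=\{\inte\}$ is matched.

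The delicate point, and the one I would spend the most effort on, is the value in degree two. Here $\eke{2}{G}=\operatorname{\mathcal{H}}^{-2}(\{\B{G}\})$ corresponds to the torsion group $\Hom{2}{G}{\rat/\inte}\cong\operatorname{H}_{2}(G;\inte)^{\vee}$, the Pontryagin dual of the full Schur multiplier, whereas the theorem asserts that only the \emph{Bogomolov multiplier} $B_0(G)^{\vee}$ survives. The reason is that $\operatorname{\mathcal{H}}^{k}$ is computed on smooth \emph{proper} models, and the passage from the quasi-affine quotient to a smooth proper model retains only the \emph{unramified} part of the cohomology: classes pulled back from abelian subgroups of $G$ are realized on rational loci and are destroyed in the resolution and compactification process. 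Concretely, I would invoke Bogomolov's identification $B_0(G)=\operatorname{H}^{2}_{nr}(\com(V)^{G},\rat/\inte)$ together with the birational invariance of unramified cohomology and the fact that, for a smooth proper model $Y$ of $V/G$, the relevant torsion subgroup of $\operatorname{H}^{3}(Y;\inte)$ equals the unramified group $\operatorname{H}^{2}_{nr}$. Matching this with the spectral-sequence computation then replaces $\operatorname{H}_{2}(G;\inte)^{\vee}$ by its unramified quotient $B_0(G)^{\vee}$.

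The main obstacle is precisely this last identification: one must show that the torsion class produced by the Cartan--Leray spectral sequence, after resolution to a smooth proper model, lands in the unramified subgroup and that nothing outside $B_0(G)$ survives. This requires controlling how the ramified classes (those restricting nontrivially to abelian subgroups) are trivialized when one compactifies $G\backslash\operatorname{GL}(V)$, i.e.\ a careful comparison between the coniveau and torsion filtrations on the cohomology of a smooth proper model and the unramified cohomology of the function field $\com(V)^{G}$; this is where properness is indispensable and where the quasi-affine computation alone is insufficient. The finiteness assertion for $i>0$ is comparatively routine, following from finiteness of $\Hom{p}{G}{\inte}$ for $p>0$ and torsion-freeness of $\Hom{q}{\operatorname{GL}(V)}{\inte}$.
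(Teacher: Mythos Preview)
The paper does not contain a proof of this statement at all: Theorem~\ref{thm-ekedahl} is quoted from Ekedahl's original article \cite{Ekedahl-inv} (as Theorem~5.1 there) and is used here as a black box, so there is no in-paper argument to compare your proposal against.

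That said, your sketch has the right overall shape but a genuine gap in the step you yourself flag as ``delicate.'' Your Cartan--Leray computation on the quasi-affine quotient $G\backslash\operatorname{GL}(V)$ produces the full Schur multiplier $H_2(G;\inte)^{\vee}$ in degree two, not $B_0(G)^{\vee}$, and your proposed mechanism for cutting it down---that ``ramified classes are trivialized when one compactifies''---is not an argument but a restatement of what needs to be shown. The issue is that $\mathcal{H}^k$ is only defined on classes presented via smooth proper varieties, and the passage from $\{G\backslash\operatorname{GL}(V)\}/\{\operatorname{GL}(V)\}$ to such a presentation in $\grotcom{\Var{\field}}$ does not simply ``strip off an exterior algebra'': you must actually exhibit $\{\B{G}\}$ as an explicit $\inte[\Lclass^{-1}]$-combination of smooth proper classes and track what $\mathcal{H}^{-2}$ does to each piece. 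Ekedahl's proof in \cite{Ekedahl-inv} goes through a compactification of $V/G$ (or equivalently $\mathbb{P}(V\oplus\mathbf{1})/G$), resolves singularities, and then identifies the torsion in $H^3$ of the resulting smooth proper model with $B_0(G)^{\vee}$ via the Artin--Mumford/Bogomolov description of unramified $H^2$; the cancellation of the ramified part of the Schur multiplier happens precisely in the blow-up relations that go into the Bittner presentation, and this is where the real work lies. Your proposal gestures at this but does not supply it.
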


\noindent
% Using that $\eke{2}{G}=\{B_0(G)^{\vee}\}$, one proves that, for the Saltman and Bogomolov counterexamples and for the group of order $p^5$ in the isoclinism family $\phi_{10}$, the Ekedahl invariants are non-trivial and so $\{\B{G}\}\neq \{*\}$.

Using that $\eke{2}{G}=\{B_0(G)^{\vee}\}$, one finds some groups with non-trivial Ekedahl invariants (and so $\{\B{G}\}\neq \{*\}$).

\begin{Corollary}
If $G$ is one of the group of order $p^9$ defined in \cite{Saltman1984}, of order $p^6$ defined in \cite{Bogomolov1988} or of order $p^5$ belonging to the isoclinism family $\phi_{10}$ (see \cite{HKM-noether}), then the second Ekedahl invariant is non-zero and so $\{\B{G}\}\neq \{*\}$.
\end{Corollary}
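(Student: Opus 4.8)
The plan is to deduce the statement directly from Theorem~\ref{thm-ekedahl} together with the non-vanishing of the Bogomolov multiplier established in the cited references. Working over a field $\field$ of characteristic zero (so that Theorem~\ref{thm-ekedahl} applies), the crucial input is the identity $\eke{2}{G}=\{B_0(G)^{\vee}\}$ in $\Lo{\Ab}$. Thus the whole argument reduces to showing that $\{B_0(G)^{\vee}\}$ is a nonzero element of $\Lo{\Ab}$ for each of the three families of groups, and then invoking the contrapositive of the implication ``$\{\B{G}\}=\{*\}\Rightarrow$ trivial invariants'' recorded just before Theorem~\ref{thm-ekedahl}.

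First I would collect the group-theoretic fact that $B_0(G)\neq 0$ for each listed group: for the order-$p^9$ group this is Saltman's computation in \cite{Saltman1984} (after reinterpreting his unramified cohomology obstruction through Bogomolov's group-cohomology formula for $B_0$), for the order-$p^6$ group it is \cite{Bogomolov1988}, and for the order-$p^5$ groups in the isoclinism family $\phi_{10}$ it is \cite{HKM-noether}. Since $B_0(G)$ is a finite abelian group, Pontryagin duality gives $B_0(G)^{\vee}\cong B_0(G)$, so $B_0(G)^{\vee}\neq 0$ as well. It then remains to promote this non-triviality to the statement $\{B_0(G)^{\vee}\}\neq 0$ in $\Lo{\Ab}$. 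By the structure theorem for finitely generated abelian groups together with the Krull--Schmidt uniqueness of the decomposition into indecomposable summands $\inte$ and $\nicefrac{\inte}{p^k\inte}$, the group $\Lo{\Ab}$ is free abelian on the classes of these indecomposables; hence the class $\{H\}$ of any finite abelian group $H$ is the sum of the classes of its (cyclic, indecomposable) direct summands, and $\{H\}=0$ forces $H=0$. Applying this to $H=B_0(G)^{\vee}\neq 0$ yields $\eke{2}{G}=\{B_0(G)^{\vee}\}\neq 0$.

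Finally, since $2\neq 0$, the existence of a nonzero invariant $\eke{2}{G}$ shows that the Ekedahl invariants of $G$ are not all trivial. By the implication recorded just before Theorem~\ref{thm-ekedahl} (namely that $\{\B{G}\}=\{*\}$ forces $\eke{i}{G}=0$ for every $i\neq 0$), its contrapositive gives $\{\B{G}\}\neq\{*\}$. I do not expect a genuine obstacle here, since the substantive work is already carried out in Theorem~\ref{thm-ekedahl} and in the cited non-vanishing results; the only point demanding care is the injectivity statement ``$\{H\}=0\Leftrightarrow H=0$'' in $\Lo{\Ab}$, which is where I would be most explicit, as it is exactly what converts the non-triviality of the dual Bogomolov multiplier into the needed inequality of classes.
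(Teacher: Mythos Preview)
Your proposal is correct and follows exactly the route the paper intends: the corollary is stated there as an immediate consequence of Theorem~\ref{thm-ekedahl} together with the cited non-vanishing results for $B_0(G)$, without further proof. Your only addition is making explicit why $\{H\}=0$ forces $H=0$ in $\Lo{\Ab}$, a point the paper leaves implicit.
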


\noindent
It is not clear if higher Ekedahl invariants are obstructions to the rationality of the extension $\field(V)^G/\field$.

% \begin{Question} 
% Does, for $i>2$, $\eke{i}{G}\neq 0$ imply a negative answer to the Noether problem? In other words, are the Ekedahl invariants obstructions to the rationality of the extension $\field(V)^G/\field$?
% \end{Question}

% In Corollary 5.8 of \cite{Ekedahl-inv}, it is also proved that $\{ \B{\nicefrac{\inte}{47\inte}} \}\neq \{*\}\in \grotcom{\Var{\rat}}$. Therefore we could also ask:
% 
% \begin{Question} 
% Does, $\{\B{G}\}\neq \{*\}$ imply a negative answer to the Noether problem?
% \end{Question}

In Corollary 5.8 of \cite{Ekedahl-inv}, it is also proved that $\{ \B{\nicefrac{\inte}{47\inte}} \}\neq \{*\}\in \grotcom{\Var{\rat}}$, but it is unknown if $\{\B{G}\}\neq \{*\}$ implies a negative answer to the Noether problem.

To the author's knowledge, there are no examples of finite group $G$ such that $B_0(G)=0$ (i.e. $\eke{2}{G}=0$) and $\eke{3}{G}\neq 0$. 
It is worth mentioning that Peyre \cite{Peyre-3-th-unramified} showed a class of finite groups having $B_0(G)=0$, but non-trivial third unramified cohomology: it is not clear to the author if this is connected to the non-triviality of higher Ekedahl invariants.

There are a lot of classes of finite groups that have trivial Ekedahl invariants (see Section 4 of \cite{Intro-Ekedahl-Invariants}).
We want to mention one example relevant for this work: if $G\subset GL_1(\field)$, then $\{\B{G}\}=\{*\}\in \grotcom{\Var{\field}}$, under the condition that $\field$ contains a primitive root of unity of degree the exponent of $G$ \cite[Proposition 3.2]{Ekedahl-inv}.
Since, every abelian group $A$ is a direct product of cyclic groups, $\{\B{\mu_n}\}=\{*\}$ implies that $\{\B{A}\}=\{*\}$: this will be useful in the proof of Theorem \ref{thm-abelian-P-H-X}.

\vspace{0.3cm}

In this paper we first generalize the just mention result.

%In this paper we work over the complex numbers and we first generalize the result in item \textbf{2)} of Proposition \ref{pro-stateofart}.
%
\setcounter{section}{2}\setcounter{teo}{3}
\begin{teo}
  Assume $\operatorname{char}(\field)=0$ and assume that $\field$ contains a primitive root of unity of degree the exponent of $G$.
  If $G$ is a finite subgroup of $\Gl{3}{\field}$, then $\cl=\{*\}$ in $\grotcom{\Var{\field}}$ and the Ekedahl invariants of $G$ are trivial.
\end{teo}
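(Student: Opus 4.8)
The plan is to reduce the assertion $\cl=\{*\}$ to a single cut-and-paste identity for a quotient surface, by projecting to $\operatorname{PGL}_3$ and running an induction on the dimension of the representation, with finite subgroups of $\operatorname{GL}_1$ and $\operatorname{GL}_2$ as base cases. Write $G\subset\Gl{3}{\field}=\operatorname{GL}(V)$ with $V=\field^{3}$. If $V$ has a trivial $G$-summand then $G$ embeds in the $\operatorname{GL}_2$ acting on the complement and we are done by a lower-dimensional case, so I assume $V^{G}=0$. Let $Z=G\cap\field^{*}\operatorname{Id}$ be the (cyclic, hence abelian) scalar subgroup and $\bar G=G/Z\subset\operatorname{PGL}_3(\field)$ the projective reduction. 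The engine is a stabilizer stratification together with additivity of the stack class: since $[V/G]\to\B{G}$ is a rank-$3$ vector bundle, $\cl\,\Lclass^{3}=\{[V/G]\}$, and decomposing $V=\bigsqcup_{(H)}V_{(H)}$ into the locally closed loci with stabilizer conjugate to $H$ gives
\[
  \cl\,\Lclass^{3}=\sum_{(H)}\{[V_{(H)}/G]\}=\sum_{(H)}\{\B{H}\}\,\{V_{(H)}/G\},
\]
the last equality being the technical gerbe-class lemma. Every proper stabilizer $H$ fixes a nonzero vector, hence is conjugate to a finite subgroup of $\operatorname{GL}_2$, while the origin is the only full-stabilizer point and contributes the term $\cl$.

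Next I would invoke the inductive hypothesis $\{\B{H}\}=\{*\}$ for every finite $H\subset\operatorname{GL}_{\le 2}$: the $\operatorname{GL}_1$ case is the abelian result quoted in the introduction, and the $\operatorname{GL}_2$ case is the same argument, but strictly easier, because there the analogous crux is $\{\mathbb{P}^{1}/\bar G\}=\{\mathbb{P}^{1}\}$, which always holds since $\mathbb{P}^{1}/\bar G\cong\mathbb{P}^{1}$. With this hypothesis each proper-stabilizer stratum contributes exactly its coarse quotient, and the formula collapses, using additivity on $V/G$, to
\[
  (\Lclass^{3}-1)\,\cl=\{(V\setminus\{0\})/G\}.
\]
Analyzing the induced $\mathbb{G}_m$-fibration $(V\setminus\{0\})/G\to\mathbb{P}^{2}/\bar G$ (stratifying over the branch locus and again using $\{\B{H}\}=\{*\}$ for the fibrewise stabilizers) gives $\{(V\setminus\{0\})/G\}=(\Lclass-1)\{\mathbb{P}^{2}/\bar G\}$, so that $\cl=\{*\}$ becomes equivalent to the single identity $\{\mathbb{P}^{2}/\bar G\}=\{\mathbb{P}^{2}\}$ in $\grotcom{\Var{\field}}$.

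To settle this I would pass to the algebraic closure and use the classification of finite subgroups of $\operatorname{PGL}_3(\mathbb{C})$ (Blichfeldt--Miller--Dickson). When $\bar G$ is intransitive or imprimitive it stabilizes a point, a line, or a triangle, and $\mathbb{P}^{2}/\bar G$ is then described as an explicit fibration over $\mathbb{P}^{1}$ (or a quotient thereof) whose class is computed directly to be $\{\mathbb{P}^{2}\}$; these are exactly the cases of solvable, in particular abelian, projective reduction, matching the recurrence relation of the abstract.

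The main obstacle is the finite list of \emph{primitive} groups: the Hessian group of order $216$ with its subgroups of orders $36$ and $72$, the icosahedral group $A_{5}$, the Klein group $\operatorname{PSL}_2(7)$ of order $168$, and the Valentiner group $A_{6}$ of order $360$. For these the induction gives no leverage and one must establish $\{\mathbb{P}^{2}/\bar G\}=\{\mathbb{P}^{2}\}$ directly. The rationality of each quotient surface $\mathbb{P}^{2}/\bar G$ is classically known, but upgrading birational rationality to the \emph{exact} class equality in the Kontsevich ring is the delicate point, since in general only equality modulo $\Lclass$ is automatic. I would therefore exhibit, group by group, an explicit cellular decomposition of $\mathbb{P}^{2}/\bar G$ (or a chain of blow-ups and blow-downs linking it to $\mathbb{P}^{2}$) that realizes the class identity; this case analysis for the six primitive groups is where essentially all of the genuine work resides.
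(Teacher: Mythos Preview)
Your reduction to the identity $\{\mathbb{P}^{2}/\bar G\}=\{\mathbb{P}^{2}\}$ is correct and essentially matches the paper's; the paper runs it through the projective stack quotient rather than the affine cone, obtaining $\{\mathcal{B}G\}\{\mathbb{P}^{2}\}=\{[\mathbb{P}^{2}/H]\}$ and then $\{[\mathbb{P}^{2}/H]\}=\{\mathbb{P}^{2}/H\}$ by the same $\operatorname{GL}_{\le 2}$ induction on point- and line-stabilizers that you sketch.

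The divergence is entirely at the crux, and here there is a genuine gap in your proposal. You invoke the Blichfeldt--Miller--Dickson classification and propose to treat the six primitive groups by hand, conceding that this case analysis ``is where essentially all of the genuine work resides'' without carrying it out. The paper avoids this entirely with a \emph{uniform} argument valid for every finite $H\subset\operatorname{PGL}_3$. Choose a resolution $\pi\colon X\to\mathbb{P}^{2}/H$; since two-dimensional quotient singularities are rational, each exceptional fiber $D_y$ is a tree of $\mathbb{P}^{1}$'s, so $\{D_y\}=n_y\mathbb{L}+1$ and hence $\{\mathbb{P}^{2}/H\}=\{X\}-n\,\mathbb{L}$ with $n=\sum_y n_y$. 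Now $X$ is unirational, hence rational by Castelnuovo (this is where $\operatorname{char}(\field)=0$ enters), so there is also a birational morphism $\pi'\colon X\to\mathbb{P}^{2}$ with, say, $m$ exceptional components, giving $\{\mathbb{P}^{2}\}=\{X\}-m\,\mathbb{L}$. Finally $m=n$ because the Leray spectral sequence for either $\pi$ or $\pi'$ computes $\operatorname{H}^{2}(X,\mathbb{Q})\cong\mathbb{Q}^{n+1}\cong\mathbb{Q}^{m+1}$.

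So the classification is never invoked: rationality of the resolution plus a Betti-number count replaces your entire proposed case analysis. Your route is plausible in principle but unfinished, and this uniform mechanism is the missing idea.
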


\noindent
The case when $G$ is a finite subgroup of $\Gl{4}{\field}$ is more complicated because it involves the study of the class in $\grotcom{\Var{\field}}$ of resolution of singularities of the affine varieties $\nicefrac{\mathbb{A}^3}{A}$, for a finite group $A\subset \Gl{3}{\field}$.

Nevertheless, we can still tackle this problem if we assume the ground field to be the complex field (and we use the toric varieties tools). 
Indeed, in higher dimension we are able to prove that the Ekedahl invariants satisfy a kind of recurrence equation in $\Lo{\Ab}$.

\setcounter{section}{3}\setcounter{teo}{1}
\begin{teo}
Let $G$ be a finite subgroup of $\Gl{n}{\com}$ and let $H$ be the image of $G$ under the canonical projection $\Gl{n}{\com}\rightarrow \operatorname{PGL}_n(\com)$.

  If $H$ is abelian and if $\nicefrac{\mathbb{P}^{n-1}_{\mathbb{C}}}{H}$ has only zero dimensional singularities, then each singularity is toroidal and for every integer $k$
  \begin{equation*}
    \eke{k}{G}+\eke{k+2}{G}+\dots+\eke{k+2(n-1)}{G}=\{\Hom{-k}{X}{\inte}\},
  \end{equation*}
  where $X\rightarrow\nicefrac{\mathbb{P}^{n-1}}{H}$ is a smooth and proper resolution with normal crossing toric exceptional divisors.
\end{teo}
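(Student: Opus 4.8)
The plan is to first collapse the sum of Ekedahl invariants into a single value of $\operatorname{\mathcal{H}}$, and then to evaluate that value geometrically. For the collapse I would exploit that multiplication by $\mathbb{L}$ shifts the cohomological operators: from the defining rule $\operatorname{\mathcal{H}}^{k}(\{Y\}/\mathbb{L}^{m})=\{\Hom{k+2m}{Y}{\inte}\}$ one reads off $\operatorname{\mathcal{H}}^{k}(\mathbb{L}^{j}\alpha)=\operatorname{\mathcal{H}}^{k-2j}(\alpha)$ for every class $\alpha$, so that $\eke{k+2j}{G}=\operatorname{\mathcal{H}}^{-k-2j}(\cl)=\operatorname{\mathcal{H}}^{-k}(\mathbb{L}^{j}\cl)$. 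Summing over $j=0,\dots,n-1$ and using $1+\mathbb{L}+\dots+\mathbb{L}^{n-1}=\{\mathbb{P}^{n-1}\}$ yields
\begin{equation*}
  \eke{k}{G}+\eke{k+2}{G}+\dots+\eke{k+2(n-1)}{G}=\operatorname{\mathcal{H}}^{-k}\bigl(\{\mathbb{P}^{n-1}\}\cdot\cl\bigr).
\end{equation*}
Thus the theorem is reduced to the single identity $\operatorname{\mathcal{H}}^{-k}(\{\mathbb{P}^{n-1}\}\cdot\cl)=\{\Hom{-k}{X}{\inte}\}$, and since $X$ is smooth and proper the right-hand side is exactly $\operatorname{\mathcal{H}}^{-k}(\{X\})$.

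Next I would compute the class $\{\mathbb{P}^{n-1}\}\cdot\cl$. Writing $V=\com^{n}$ and combining $\cl=\{\nicefrac{\operatorname{GL}(V)}{G}\}/\{\operatorname{GL}(V)\}$ with the vector-bundle relation $\{[V/G]\}=\mathbb{L}^{n}\cl$ over the classifying stack, the scissor relation for $V=\{0\}\sqcup(V\setminus\{0\})$ gives $\{[(V\setminus\{0\})/G]\}=(\mathbb{L}^{n}-1)\cl$, whence $\{\mathbb{P}^{n-1}\}\cdot\cl=\frac{1}{\mathbb{L}-1}\{[(V\setminus\{0\})/G]\}$. Letting $\Gamma=\mathbb{G}_m\cdot G\subset\operatorname{GL}(V)$, the subgroup $G$ is normal with $\Gamma/G\cong\mathbb{G}_m$, and dividing $V\setminus\{0\}$ by the free scaling action identifies $[(V\setminus\{0\})/\Gamma]$ with $[\mathbb{P}^{n-1}/H]$. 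The key simplification is that every $G$-stabilizer of a point of $V\setminus\{0\}$ injects into the $H$-stabilizer of the corresponding line, hence is abelian; invoking the cited vanishing $\{\B{A}\}=\{*\}$ for finite abelian $A$, each stacky stratum contributes its coarse class, so that $[(V\setminus\{0\})/G]$ and $[\mathbb{P}^{n-1}/H]$ may be replaced throughout by the coarse quotient varieties. This pins down $\{\mathbb{P}^{n-1}\}\cdot\cl=\{\mathbb{P}^{n-1}/H\}$, and over the free locus it exhibits $(V\setminus\{0\})/G\to\mathbb{P}^{n-1}/H$ as a Zariski-locally trivial $\mathbb{G}_m$-bundle.

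It remains to evaluate $\operatorname{\mathcal{H}}^{-k}$ on the class of the \emph{singular} variety $\mathbb{P}^{n-1}/H$, and this is where the resolution enters. First I would settle the toroidal claim: because the singularities are isolated, at a singular image $P$ the stabilizer $H_{0}\subseteq H$ is finite abelian and acts linearly, hence diagonalizably, on the tangent space, so the germ of $\mathbb{P}^{n-1}/H$ at $P$ is the affine toric variety $\com^{n-1}/H_{0}$ of a simplicial cone $\sigma$ — a toroidal singularity. One then takes $X$ to be the toric resolution obtained from a regular subdivision of each such $\sigma$, so the exceptional divisors are toric with normal crossings. To apply $\operatorname{\mathcal{H}}^{-k}$ one rewrites $\{\mathbb{P}^{n-1}/H\}$ through $X$: stratifying by the finite set $\Sigma$ of singular points, the bundle relation disposes of $\mathbb{P}^{n-1}/H\setminus\Sigma$, and at each $P$ the local contribution of the fan subdivision producing $E_{P}\subset X$ must be matched, cone by cone, with the integral cohomology of the smooth complete toric pieces of $X$.

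The main obstacle is precisely this local toric bookkeeping at the points of $\Sigma$: one must show that, after applying $\operatorname{\mathcal{H}}^{-k}$, the contributions of the new cones of every regular subdivision assemble exactly into $\{\Hom{-k}{X}{\inte}\}$, i.e. that the combinatorics of the subdivisions reproduces the cohomology of $X$ and that the exceptional contributions are accounted for correctly rather than double-counted. I expect this matching to be the heart of the proof, and it is here that both hypotheses are used essentially: \emph{$H$ abelian} forces each local stabilizer to be diagonalizable (making the local models genuinely toric and licensing the reduction $\{\B{A}\}=\{*\}$), while the \emph{zero-dimensionality of the singular locus} keeps $\Sigma$ finite, so that the exceptional loci $E_{P}$ are disjoint and the entire computation localizes at one cone at a time.
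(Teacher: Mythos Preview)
Your first two moves coincide with the paper's: the collapse of the sum to $\mathcal{H}^{-k}(\{\mathbb{P}^{n-1}\}\cdot\cl)$ is exactly Lemma~3.3\textbf{i)}, and your identification $\{\mathbb{P}^{n-1}\}\cdot\cl=\{\mathbb{P}^{n-1}/H\}$ is the content of Lemma~2.1 together with Proposition~2.2 and the stratum-by-stratum replacement $\{\B{A}\}=\{*\}$ for abelian $A$ (the paper routes this through the line bundle $V\setminus\{0\}\to\mathbb{P}(V)$ rather than through $\Gamma=\mathbb{G}_m\cdot G$, but the substance is identical). Your toroidal claim is likewise the paper's Lemma~3.1.

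The gap is in the final step, and it is not mere bookkeeping. You phrase it as showing $\mathcal{H}^{-k}(\{\mathbb{P}^{n-1}/H\})=\mathcal{H}^{-k}(\{X\})$, which amounts to the vanishing of $\sum_y\mathcal{H}^{-k}(\{D_y\}-\{y\})$. For $-k$ odd or out of range $[0,2(n-2)]$ this is fine (toric varieties have no odd cohomology; dimension bounds the rest). But for even $-k\in[2,2(n-2)]$ the virtual Betti numbers of the exceptional pieces are genuinely positive---already for a single-blowup with $D_y=\mathbb{P}^{n-2}$ one has $\beta^{2}_{\mathrm{virt}}(D_y)=1$---so a cone-by-cone cancellation of the kind you anticipate simply does not happen. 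The paper does \emph{not} attempt it. Instead it writes $\{\mathbb{P}^{n-1}/H\}=\{X\}-\sum_y(\{D_y\}-\{y\})$ by inclusion--exclusion into smooth proper toric strata, then separates free and torsion parts in $L_0(\Ab)$ and matches them using three inputs you have not named: (a) Ekedahl's structure Theorem~1.3, which forces the left side to be $\{\mathbb{Z}\}$ plus a $\mathbb{Z}$-combination of classes of \emph{finite} groups (so its free rank is exactly $1$ whenever the index $0$ appears in the sum); (b) the torsion-freeness of the integral cohomology of smooth toric varieties, so that after inclusion--exclusion the only possible torsion on the right is $\{\operatorname{tor}\Hom{-k}{X}{\mathbb{Z}}\}$; and (c) the identity $\Hom{*}{\mathbb{P}^{n-1}/H}{\mathbb{Q}}=\Hom{*}{\mathbb{P}^{n-1}}{\mathbb{Q}}$ (because $H$ comes from $\operatorname{GL}_n$ and hence acts trivially on $\Hom{*}{\mathbb{P}^{n-1}}{\mathbb{Q}}$), which via the virtual Poincar\'e polynomial gives the Betti identity (Lemma~3.3\textbf{iii)}) needed to match the free parts. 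It is the combination of (a) with (b)--(c), not local fan combinatorics, that closes the even case.
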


We focus, then, on the $p$-discrete Heisenberg group $\hei{p}$, where we only deal with cyclic quotient singularities. 
This is an interesting candidate for the study of the Ekedahl invariants, because $B_0(\hei{p})=0$ (using Lemma 4.9 in \cite{Bogomolov1988}) and so the first unknown Ekedahl invariant is $\eke{3}{\hei{p}}$. 

\setcounter{section}{4}\setcounter{teo}{3}
\begin{teo}
  The Ekedahl invariants of the fifth discrete Heisenberg group $\hei{5}$ are trivial.
\end{teo}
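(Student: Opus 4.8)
The plan is to put $\hei{5}$ in the setting of Theorem \ref{thm-abelian-P-H-X} and then reduce the whole computation to the integral cohomology of one toric resolution. First I would realize $\hei{5}$ through a $5$-dimensional faithful irreducible representation $V$, so that $\hei{5}\subset\Gl{5}{\com}$; since $\{\B{\hei{5}}\}$ is independent of the chosen faithful representation, this loses nothing. In this representation the centre $Z(\hei{5})$ is generated by the commutator of the two standard generators and acts by scalars, so the image $H$ of $\hei{5}$ in $\operatorname{PGL}_5(\com)$ is $\hei{5}/Z(\hei{5})\cong(\nicefrac{\inte}{5\inte})^2$, an abelian group. Thus $n=5$ and $H$ is abelian; to invoke Theorem \ref{thm-abelian-P-H-X} it remains to check that $\nicefrac{\mathbb{P}^4}{H}$ has only zero-dimensional singularities.

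The second step is the fixed-point analysis of the $H$-action on $\mathbb{P}^4$. Writing $H$ with the ``clock and shift'' generators, a non-identity element lifts to a monomial matrix which is a single $5$-cycle of determinant $1$; hence it has five distinct fifth roots of unity as eigenvalues and exactly five isolated fixed lines in $\mathbb{P}^4$. Irreducibility of $V$ forbids a common eigenline, so the fixed loci of the six cyclic subgroups of order $5$ in $H$ are pairwise disjoint, and the complementary generator cyclically permutes the five fixed points of a given subgroup. Therefore the thirty fixed points form six $H$-orbits, $\nicefrac{\mathbb{P}^4}{H}$ has exactly six singular points, and the local model at each is the isolated cyclic quotient singularity $\com^{4}/(\nicefrac{\inte}{5\inte})$ with weights $(1,2,3,4)$ (the four nonzero residues). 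All singularities being zero-dimensional and toroidal, Theorem \ref{thm-abelian-P-H-X} yields, for every integer $k$,
\[
\eke{k}{\hei{5}}+\eke{k+2}{\hei{5}}+\eke{k+4}{\hei{5}}+\eke{k+6}{\hei{5}}+\eke{k+8}{\hei{5}}=\{\Hom{-k}{X}{\inte}\},
\]
where $X\to\nicefrac{\mathbb{P}^4}{H}$ is a smooth proper resolution with normal crossing toric exceptional divisors.

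The decisive point is then the integral cohomology of $X$. I would build $X$ explicitly by a toric (star-)subdivision of the six simplicial cones modelling the singular points and compute $\Hom{\ast}{X}{\inte}$ from this combinatorial data. The property I actually need is that $\Hom{\ast}{X}{\inte}$ is \emph{torsion-free} (and, for the odd invariants, concentrated in even degrees). This is the step I expect to be the main obstacle: although smooth complete toric varieties have torsion-free cohomology, here $X$ is only toroidal and glued from six local toric resolutions, so controlling the \emph{integral} cohomology, and in particular excluding torsion, is the heart of the matter --- any torsion in $\Hom{\ast}{X}{\inte}$ would immediately produce a nonzero finite class on the right-hand side of the recurrence.

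Finally I would read off the invariants from the recurrence. The nine instances $k=-8,-7,\dots,0$ determine $\eke{0}{\hei{5}},\dots,\eke{8}{\hei{5}}$ recursively, using $\eke{i}{\hei{5}}=0$ for $i<0$ from Theorem \ref{thm-ekedahl}; the instances $k\ge 1$ then determine all higher invariants. Each invariant thereby becomes an explicit integer combination of the classes $\{\Hom{m}{X}{\inte}\}$, so torsion-freeness of $\Hom{\ast}{X}{\inte}$ makes every $\eke{i}{\hei{5}}$ an integer multiple of $\{\inte\}$. On the other hand Theorem \ref{thm-ekedahl} guarantees that for $i>0$ the class $\eke{i}{\hei{5}}$ is an integer combination of classes of finite abelian groups --- note that $\eke{2}{\hei{5}}=\{B_0(\hei{5})^{\vee}\}=0$ because $B_0(\hei{5})=0$ by Lemma~4.9 of \cite{Bogomolov1988}. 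Since $\{\inte\}$ is linearly independent from the classes of finite abelian groups in $\Lo{\Ab}$, an element that is simultaneously a multiple of $\{\inte\}$ and a combination of finite groups must vanish; hence $\eke{i}{\hei{5}}=0$ for every $i>0$, while $\eke{0}{\hei{5}}=\{\inte\}$. Together with $\eke{i}{\hei{5}}=0$ for $i<0$ this is exactly the triviality of the Ekedahl invariants of $\hei{5}$.
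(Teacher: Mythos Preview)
Your outline is correct and aligns with the paper's own strategy: embed $\hei{5}\subset\Gl{5}{\com}$ via a faithful irreducible representation so that the projective image $H\cong(\nicefrac{\inte}{5\inte})^2$ is abelian, check that $\nicefrac{\mathbb{P}^4}{H}$ has six isolated toroidal singular points, apply Theorem~\ref{thm-abelian-P-H-X}, and reduce to the torsion in $\Hom{\ast}{\varns{5}}{\inte}$. Your final deduction (each $\eke{i}{\hei{5}}$ becomes an integer multiple of $\{\inte\}$, hence must vanish for $i>0$ by the second part of Theorem~\ref{thm-ekedahl}) is a clean repackaging of the paper's degree-by-degree bookkeeping.

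The paper is slightly more economical at the last step: using $\eke{1}{\hei{5}}=\eke{2}{\hei{5}}=0$ from Theorem~\ref{thm-ekedahl} together with Poincar\'e duality on the $8$-manifold $\varns{5}$, it reduces the entire claim to the \emph{single} vanishing $\operatorname{tor}\Hom{5}{\varns{5}}{\inte}=0$, rather than torsion-freeness in all degrees. The substantive content --- which you rightly flag as the obstacle and leave unproved --- is then carried out explicitly: a Cartan--Leray spectral sequence for the free quotient $U\to\open{5}$ gives $\Hom{odd}{\open{5}}{\inte}=0$ (Theorem~\ref{teo-cohomology-open-set-articolo}), and an explicit analysis of the fan $\Delta_5$ via a Gysin-type spectral sequence and an intersection-theory matrix yields $\Hrel{5}{\varns{5}}{E}{\inte}=0$ (Theorem~\ref{thm-rational-cohomology-exceptional-divisor-articolo}); the long exact sequence of the pair $(\varns{5},E)$ then finishes. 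Since $\varns{5}$ is only toroidal, not globally toric, there is no shortcut here, and your ``compute $\Hom{\ast}{X}{\inte}$ from the combinatorial data'' would have to pass through a computation of this kind.

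One small slip in your fixed-point analysis: not every nontrivial element of $H$ lifts to a monomial matrix with underlying $5$-cycle (the diagonal generator does not), but the conclusion --- five distinct eigenlines per nontrivial element, six singular points of type $\frac{1}{5}(1,2,3,4)$ --- is correct and matches Theorems~\ref{theo-singular-point-var-p} and~\ref{theo-toroidal-singularities-articolo}.
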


\noindent
Kang has proved that the Noether problem for the Heisenberg group $\hei{p}$ has a positive answer. Indeed using Theorem 1.9 of \cite{Kang-Rationality} one shows that the extension $\com(x_g, g\in \hei{p})^{\hei{p}}/\com$ is rational.

We show a general approach for the study of the Ekedahl invariants of $\hei{p}$, but we narrow down our investigation to $p=5$ because of the difficulties to extend the technical result in Theorem \ref{thm-rational-cohomology-exceptional-divisor-articolo}.
Therefore, it is natural to conjecture that:
\begin{conj}
  The Ekedahl invariants of the Heisenberg group $H_p$ of order $p^3$ are trivial.
\end{conj}

After a preliminary section where we review the theory of the Ekedahl invariants, in 
Section \ref{sec:GL23} we prove that these are trivial for all finite subgroups of $\Gl{3}{\field}$.
Then, in Section \ref{sec-abelian-subgroups}, we study the finite linear groups with abelian projective quotient and in the last section we deal with the fifth Heisenberg group. 

\begin{notation}\em
  Along the whole work, we consider finite linear groups.
  In Section \ref{sec:GL23} we also assume that $\field$ contains a primitive root of unity of degree the exponent of $G$.
  In Section \ref{sec-abelian-subgroups} and Section \ref{sec:h-5} the ground field is $\com$.

  We set $*=\operatorname{Spec}(\field)$.
  
  Finally, if $X$ is a scheme with a $G$-action, then we denote by $\nicefrac{X}{G}$ the schematic quotient and by $[\nicefrac{X}{G}]$ the stack quotient. 
  %
% %   
% %   \noindent
% %   Moreover every cohomology group (if not explicitly expressed differently) is the singular cohomology group with integer coefficients, that is $\Homred{k}{-}=\Hom{k}{-}{\inte}$.
% %   
% %   Similarly for the cohomology of a finite group $G$, $\Homred{k}{G}=\Hom{k}{G}{\inte}$.
% %   
% %   \noindent
% %   Given an $n$-dimensional vector space $V$ we use for simplicity $\mathbb{P}^{n-1}$ for the projective space $\mathbb{P}(V)$.
% %   
% %   We also denote by $\operatorname{Id}_n$ the identity element of $\operatorname{GL}_n$.
\end{notation}

\setcounter{section}{0}\setcounter{teo}{0}
\section{Preliminaries}\label{sec-preliminary}
In this section we review the background and the definition of the Ekedahl invariants. 
A more complete and self contained introduction to these new invariants can be found in \cite{Intro-Ekedahl-Invariants}.
In this section we consider finite linear group and we work over a field $\field$ of characteristic prime to the order of $G$.

The Grothendieck ring of algebraic varieties $\grot{\Var{\field}}$ is the group generated by the isomorphism classes $\{X\}$ of algebraic $\field$-varieties $X$, subject to the relation $\{X\}=\{Z\}+\{X\setminus Z\}$, for all closed subvarieties $Z$ of $X$. The group $\grot{\Var{\field}}$ has a ring structure given by $\{X\}\cdot \{Y\}=\{X\times Y\}$.
Let $\mathbb{L}$ be the class of the affine line.
The completion of $\grot{\Var{\field}}[\mathbb{L}^{-1}]$ with respect to the dimension filtration %$Fil^{n}\left(\grot{\Var{\field}}[\mathbb{L}^{-1}]\right)=\{\nicefrac{\{X\}}{\mathbb{L}^i}: \dim{X}-i\leq n\}$.
\[
  Fil^{n}\left(\grot{\Var{\field}}[\mathbb{L}^{-1}]\right)=\{\nicefrac{\{X\}}{\mathbb{L}^i}: \dim{X}-i\leq n\}
\]
is called the Kontsevich value ring and denoted by $\grotcom{\Var{\field}}$.

% \noindent
% From now on, we denote by $\{*\}$ the class of a point in $\grotcom{\Var{\field}}$.
% %
% We also use sometimes the notation $1=\{*\}\in \grotcom{\Var{\field}}$.

In \cite{EkedahlStack}, Ekedahl introduced the concept of Grothendieck group of algebraic stacks.
\begin{defi}\label{defi-grot-stack}
  We denote by $\grot{\Stck{\field}}$ the Grothendieck group of algebraic $\field$-stacks. This is the group generated by the isomorphism classes $\{X\}$ of algebraic $\field$-stacks $X$ of finite type all of whose automorphism group scheme are affine 
  (algebraic $\field$-stacks of finite type with affine stabilizers, in short). 
  %(shortly, algebraic $\field$-stacks of finite type with affine stabilizers). 
  %
  The elements of this group fulfill the following relations:
  \begin{enumerate}
    \item for each closed substack $Y$ of $X$, $\{X\}=\{Y\}+\{Z\}$, where $Z$ is the complement of $Y$ in $X$;
    \item for each vector bundle $E$ of constant rank $n$ over $X$, $\{E\}=\{X\times \mathbb{A}^n\}$.
  \end{enumerate}
\end{defi}

\noindent
Similarly to $\grot{\Var{\field}}$, $\grot{\Stck{\field}}$ has a ring structure.
In Theorem 4.1 of \cite{EkedahlStack} it is proved that $\grot{\Stck{\field}}=\grot{\Var{\field}}[\Lclass^{-1}, (\Lclass^{n}-1)^{-1}, \forall n\in \nat]$. 
Moreover we observe in Lemma 2.2 of \cite{Intro-Ekedahl-Invariants} the completion map $\grot{\Var{\field}}[\Lclass^{-1}] \rightarrow \grotcom{\Var{\field}}$ factors through 
\[
  \grot{\Var{\field}}[\Lclass^{-1}] \rightarrow \grot{\Stck{\field}}\rightarrow \grotcom{\Var{\field}}.
\]
The classifying stack of the group $G$ is usually defined as the stack quotient $\B{G}=[\nicefrac{*}{G}]$ and, via this map, one sees the class of the classifying stack $\cl$ as an element of $\grotcom{\Var{\field}}$ (see Proposition 2.5.b in \cite{Intro-Ekedahl-Invariants}).

Using the Bittner presentation (see \cite{Bittner2004}), given a integer $k$, Ekedahl defines in \cite{EkedahlStack} a \textit{cohomological} map for the Kontsevich value ring, sending
$\nicefrac{\{X\}}{\mathbb{L}^m}$ to $\{\Hom{k+2m}{X}{\mathbb{Z}}\}$, for every smooth and proper $\field$-variety $X$:
\begin{eqnarray*}
  \operatorname{\mathcal{H}}^k: \grotcom{\Var{\field}}&\rightarrow& \Lo{\Ab}\\
				\nicefrac{\{X\}}{\mathbb{L}^m}&\mapsto  &\{\Hom{k+2m}{X}{\mathbb{Z}}\}.
\end{eqnarray*}
If $\field=\com$, it is natural to assign to every smooth and proper $\field$-variety $X$ the class of its integral cohomology group $\Hom{k}{X}{\inte}$.
If instead $\field$ is different from $\com$, then we send $\{X\}$ to the class $\{\Hom{k}{X}{\inte}\}$ defined as $\operatorname{dim}\Hom{k}{X}{\inte}\{\inte\}+\sum_p\{\operatorname{tor}\Hom{k}{X}{\mathbb{Z}_p}\}$.
This map can be extended to $\grotcom{\Var{\field}}$.
In Section 3 of \cite{Intro-Ekedahl-Invariants}, we prove that this map is well defined.

\begin{notation}\em
  Every cohomology group (if not explicitly expressed differently) is the singular cohomology group with integer coefficients, that is $\Homred{k}{-}=\Hom{k}{-}{\inte}$.
  %
  %Similarly for the cohomology of a finite group $G$, $\Homred{k}{G}=\Hom{k}{G}{\inte}$.
\end{notation}

\begin{defi}\label{def-ekedahl-invariants}
  For every integer $i$, the $i$-th \emph{Ekedahl invariant} $\eke{i}{G}$ of the group $G$ is $\operatorname{\mathcal{H}}^{-i}(\{\B{G}\})$ in $\Lo{\Ab}$.
  We say that the Ekedahl invariants of $G$ are \emph{trivial} if $\eke{i}{G}=0$ for all integers $i\neq 0$.
  %(When $\eke{0}{G}=\{\inte\}$ and $\eke{i\neq 0}{G}=0$ we say that the Ekedahl invariants of the group are \emph{trivial}.)
\end{defi}

\noindent

\section{The finite subgroups of $\Gl{3}{\field}$}\label{sec:GL23}
In this section we assume that $\operatorname{char}(\field)$ is prime with the order of $G$ and that $\field$ contains a primitive root of unity of degree the exponent of $G$. 
Let $V$ be an $n$-dimensional $\field$-vector space.
%
%In this section we assume $\field$ to be an algebraically closed field of characteristic zero and let $V$ be an $n$-dimensional $\field$-vector space.
%
%In the end of this section we show that in all the results proved the condition of $\field$ being algebraically closed can be drop, but we need to assume that certain root of unity belongs to $\field$.
%
%From now on we set $\com$ as the ground field and let $V$ be an $n$-dimensional $\com$-vector space.
%
We assume $G$ to be a finite subgroup $\operatorname{GL}_n(\field)$ and $H$ be its quotient in $\operatorname{PGL}_n(\field)$:
  \begin{diagram}
  0	&\rTo   &K		&\rTo	&G       	&\rTo    	& H		&\rTo	&0\\
	  &   	&\dInto	&	&\dInto  	&       	& \dTo		&		&\\
  0	&\rTo   &\mathbb{G}_m	&\rTo	&\operatorname{GL}_n(\field)    	&\rTo   	& \operatorname{PGL}_n(\field)		&\rTo	&0.
  \end{diagram}

%
%Over all, the dimension of $[\nicefrac{\mathbb{P}(V)}{G}]$ is one less than $[\nicefrac{V}{G}]$, so using the tools available one can go one dimension further. 
%
%We immeditely reduce to $\{[\nicefrac{\mathbb{P}(V)}{H}]\}$.
\noindent
We sometimes use for simplicity $\mathbb{P}^{n-1}$ for the projective space $\mathbb{P}(V)$.

%\begin{notation}\em
%  We sometimes use for simplicity $\mathbb{P}^{n-1}$ for the projective space $\mathbb{P}(V)$.
%  %
%  We also denote by $\operatorname{Id}$ the identity element of $\operatorname{GL}(V)$.
%\end{notation}

To get information about $\cl$, we study the class of the stack quotient $\{[\nicefrac{\mathbb{P}(V)}{G}]\}$.

\begin{lem}[Lemma 2.4 of \cite{Intro-Ekedahl-Invariants}]\label{lem-utile}
$\cl\left(1+\Lclass+\dots+\Lclass^{n}\right)=\{[\nicefrac{\mathbb{P}(V)}{G}]\}$ in $\grotcom{\Var{\field}}$.
\end{lem}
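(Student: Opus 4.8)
The plan is to prove the identity first in the Grothendieck group of stacks $\grot{\Stck{\field}}$, where the two relations of Definition~\ref{defi-grot-stack} are at our disposal, and only then transport it to $\grotcom{\Var{\field}}$ along the map $\grot{\Stck{\field}}\rightarrow\grotcom{\Var{\field}}$ recalled above. The argument is purely formal: it uses only the linearity of the $G$-action and the ring structure of $\grot{\Stck{\field}}$, so neither the hypothesis on $\operatorname{char}(\field)$ nor the presence of roots of unity plays any role here.

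First I would stratify $V\cong\mathbb{A}^{n}$ into the origin and its complement. Since $G$ acts linearly the origin $0\in V$ is a fixed point, so its quotient stack is $\B{G}$, and relation~(1) gives $\{[\nicefrac{V}{G}]\}=\{[\nicefrac{(V\setminus\{0\})}{G}]\}+\cl$. Next, the projection $[\nicefrac{V}{G}]\rightarrow\B{G}$ is precisely the rank-$n$ vector bundle over $\B{G}$ attached to the representation $V$, so relation~(2) yields $\{[\nicefrac{V}{G}]\}=\Lclass^{n}\,\cl$. Finally, the tautological map $V\setminus\{0\}\rightarrow\mathbb{P}(V)$ is a $G$-equivariant $\mathbb{G}_{m}$-torsor, hence descends to a $\mathbb{G}_{m}$-torsor $[\nicefrac{(V\setminus\{0\})}{G}]\rightarrow[\nicefrac{\mathbb{P}(V)}{G}]$; realizing it as the complement of the zero section in the equivariant tautological line bundle and applying relation~(2) for rank one together with relation~(1) gives $\{[\nicefrac{(V\setminus\{0\})}{G}]\}=(\Lclass-1)\,\{[\nicefrac{\mathbb{P}(V)}{G}]\}$.

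Combining the three equalities I obtain $(\Lclass^{n}-1)\,\cl=(\Lclass-1)\,\{[\nicefrac{\mathbb{P}(V)}{G}]\}$ in $\grot{\Stck{\field}}$. Using the factorization $\Lclass^{n}-1=(\Lclass-1)(1+\Lclass+\dots+\Lclass^{n-1})$ and cancelling the factor $\Lclass-1$, which is invertible in $\grot{\Stck{\field}}=\grot{\Var{\field}}[\Lclass^{-1},(\Lclass^{m}-1)^{-1}]$, I recover the asserted relation between $\cl$ and $\{[\nicefrac{\mathbb{P}(V)}{G}]\}$; pushing it forward along $\grot{\Stck{\field}}\rightarrow\grotcom{\Var{\field}}$ finishes the proof.

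The step I expect to demand the most care is the $\mathbb{G}_{m}$-torsor computation on the quotient stacks: one must verify that $[\nicefrac{(V\setminus\{0\})}{G}]\rightarrow[\nicefrac{\mathbb{P}(V)}{G}]$ is genuinely the complement of the zero section of a line bundle on the base stack, namely the $G$-equivariant descent of $\mathcal{O}_{\mathbb{P}(V)}(-1)$, so that relation~(2) applies to it verbatim. The cancellation of $\Lclass-1$ also deserves a word, since it is legitimate only after passing to $\grot{\Stck{\field}}$, where this element has been inverted; everything before that point lives in the stacky Grothendieck group and not in $\grot{\Var{\field}}$ itself.
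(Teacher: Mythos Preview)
The paper does not supply its own proof of this lemma; it is quoted verbatim from another source (Lemma~2.4 of \cite{Intro-Ekedahl-Invariants}). Your argument is correct and is the standard one: stratify $V$ as $\{0\}\cup(V\setminus\{0\})$, use the vector-bundle relation for $[V/G]\to\B{G}$, and the $\mathbb{G}_m$-torsor relation for $[(V\setminus\{0\})/G]\to[\mathbb{P}(V)/G]$. The $\mathbb{G}_m$-torsor step you flag as delicate is exactly the device the paper invokes in the proof of Proposition~\ref{prop-equality-classes-PV} (citing Proposition~1.1.\textit{ii} of \cite{EkedahlStack}), so your reasoning is fully aligned with the techniques used here.

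One remark worth recording: your computation actually yields
\[
\cl\bigl(1+\Lclass+\dots+\Lclass^{\,n-1}\bigr)=\{[\nicefrac{\mathbb{P}(V)}{G}]\},
\]
with top exponent $n-1$, not $n$ as printed in the lemma. This is the correct version, since $\dim V=n$ gives $\mathbb{P}(V)=\mathbb{P}^{n-1}$; the statement in the paper carries a typo, and indeed every subsequent application of the lemma (in the proofs of Proposition~\ref{pro-GinGL2-trivial}, Theorem~\ref{thm-GinGL3-trivial}, and Lemma~\ref{lem-technical-lemma-betti-numebers}) uses the formula with exponent $n-1$.
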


By definition $H$ acts on $\mathbb{P}(V)$ and $\nicefrac{\mathbb{P}(V)}{G}\cong \nicefrac{\mathbb{P}(V)}{H}$.
Their stack quotients are not isomorphic, but the classes of their stack quotients are equal in $\grotcom{\Var{\field}}$.
%This is not true for the stack quotients, but it is true for their classes in $\grotcom{\Var{\field}}$.

\begin{pro}\label{prop-equality-classes-PV}
	%Let $G$ be a subgroup of $\operatorname{GL}(V)$ and let $H$ its reduction in $\operatorname{PGL}(V)$. 
	%
	%One has 
	%
	%$\{[\nicefrac{\mathbb{P}(V)}{G}]\}=\{[\nicefrac{\mathbb{P}(V)}{H}]\}\in \grotcom{\Var{\field}}$.
	Let $\mathcal{L}\rightarrow X$ be a $G$-equivariant line bundle over a smooth scheme $X$, where the group $G$ acts faithfully on $\mathcal{L}$.
	Assume that the kernel $K$ of such action on $X$ is cyclic and set $H=\nicefrac{G}{K}$.
	Then $\{[\nicefrac{X}{G}]\}=\{[\nicefrac{X}{H}]\}\in \grotcom{\Var{\field}}$.
	
	As a consequence, if $G$ be a subgroup of $\operatorname{GL}(V)$ and $H$ is its quotient in $\operatorname{PGL}(V)$, then $\{[\nicefrac{\mathbb{P}(V)}{G}]\}=\{[\nicefrac{\mathbb{P}(V)}{H}]\}\in \grotcom{\Var{\field}}$.% and  $\cl\left(1+\Lclass+\dots+\Lclass^{n}\right)=\{[\nicefrac{\mathbb{P}(V)}{H}]\}$	
\end{pro}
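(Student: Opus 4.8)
The plan is to prove the general statement about $\mathcal{L}\to X$ and then read off the consequence by taking $X=\mathbb{P}(V)$ with $\mathcal{L}=\mathcal{O}_{\mathbb{P}(V)}(-1)$ the tautological bundle, whose total space carries the linear $G$-action; there $K=G\cap\mathbb{G}_m$ is the group of scalar matrices in $G$, which is cyclic, and $H$ is its image in $\operatorname{PGL}(V)$. I would run the whole computation in $\grot{\Stck{\field}}$, where $\mathbb{L}-1$ is invertible, and then push the resulting identity forward along the map $\grot{\Stck{\field}}\to\grotcom{\Var{\field}}$ recalled above.

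First I would describe the $K$-action. As $K$ acts trivially on $X$, it acts on each fibre of $\mathcal{L}$ by a scalar, i.e. through a character $\chi\colon K\to\mathbb{G}_m$; this character is locally constant, hence constant on the (connected) $X$, and faithfulness of the $G$-action on $\mathcal{L}$ forces $\chi$ to be injective, so $K\cong\mu_n$ with $n=|K|$. The idea is then to use the associated $\mathbb{G}_m$-torsor to cancel the gerbe separating $[\nicefrac{X}{G}]$ from $[\nicefrac{X}{H}]$. Let $\mathcal{L}^{\times}$ be the complement of the zero section, a $G$-equivariant $\mathbb{G}_m$-torsor over $X$. Applying the vector-bundle relation of Definition \ref{defi-grot-stack} to the line bundle $[\nicefrac{\mathcal{L}}{G}]\to[\nicefrac{X}{G}]$ and then the scissor relation to remove the zero section gives $\{[\nicefrac{\mathcal{L}^{\times}}{G}]\}=(\mathbb{L}-1)\{[\nicefrac{X}{G}]\}$.

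Next I would compute the same class through $H$. Because $\chi$ is injective, $K$ acts freely on $\mathcal{L}^{\times}$ (multiplication by a nontrivial root of unity on a $\mathbb{G}_m$-fibre has no fixed point), so $\nicefrac{\mathcal{L}^{\times}}{K}$ is a scheme and quotienting in stages yields $[\nicefrac{\mathcal{L}^{\times}}{G}]=[\nicefrac{(\nicefrac{\mathcal{L}^{\times}}{K})}{H}]$. Moreover the $K$-action on $\mathcal{L}^{\times}$ is the restriction along $\chi\colon K\cong\mu_n\hookrightarrow\mathbb{G}_m$ of the torsor action, so $\nicefrac{\mathcal{L}^{\times}}{K}\to X$ is again a $\mathbb{G}_m$-torsor (indeed $\mathbb{G}_m/\mu_n\cong\mathbb{G}_m$), and it is $H$-equivariant — concretely it is the complement of the zero section of the $H$-equivariant bundle $\mathcal{L}^{\otimes n}$, on which $K$ acts trivially since $\chi^{n}=1$. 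The same two relations then give $\{[\nicefrac{\mathcal{L}^{\times}}{G}]\}=(\mathbb{L}-1)\{[\nicefrac{X}{H}]\}$.

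Comparing the two computations yields $(\mathbb{L}-1)\{[\nicefrac{X}{G}]\}=(\mathbb{L}-1)\{[\nicefrac{X}{H}]\}$, and cancelling the invertible element $\mathbb{L}-1$ gives the claim, which maps to the asserted equality in $\grotcom{\Var{\field}}$. The main obstacle is not a single hard estimate but making the two torsor computations rigorous over stacks: one must check that $[\nicefrac{\mathcal{L}^{\times}}{G}]\to[\nicefrac{X}{G}]$ and its $H$-counterpart are honest $\mathbb{G}_m$-torsors to which the vector-bundle and scissor relations of $\grot{\Stck{\field}}$ apply, that the quotient-in-stages identity $[\nicefrac{\mathcal{L}^{\times}}{G}]=[\nicefrac{(\nicefrac{\mathcal{L}^{\times}}{K})}{H}]$ is legitimate for the free normal $K$-action, and that $\mathbb{L}-1$ is invertible — the last being immediate from the presentation $\grot{\Stck{\field}}=\grot{\Var{\field}}[\mathbb{L}^{-1},(\mathbb{L}^{n}-1)^{-1}]$.
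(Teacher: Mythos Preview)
Your argument is correct and is essentially the paper's own proof, carried out with more care: both compute $\{[\nicefrac{\mathcal{L}^{\times}}{G}]\}$ in two ways via the $\mathbb{G}_m$-torsor relation (the paper invokes Proposition~1.1.ii of \cite{EkedahlStack} for the identity $\{[\nicefrac{\mathcal{L}^{\times}}{G}]\}=(\mathbb{L}-1)\{[\nicefrac{X}{G}]\}$, where you instead combine the vector-bundle and scissor relations), use the quotient-in-stages identification $[\nicefrac{\mathcal{L}^{\times}}{G}]=[\nicefrac{(\nicefrac{\mathcal{L}^{\times}}{K})}{H}]$, and cancel $\mathbb{L}-1$. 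Your write-up is in fact more precise than the paper's, which conflates $\mathcal{L}$ with $\mathcal{L}^{\times}$ and omits the verification that $K$ acts through a faithful character and that $\nicefrac{\mathcal{L}^{\times}}{K}\to X$ is again a $\mathbb{G}_m$-torsor.
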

\begin{proof}
%  We denote by $V_O=V\setminus \{O\}$, where $O$ is the origin of $V$.
%  %
%  The natural map $[\nicefrac{V_O}{G}]\rightarrow [\nicefrac{\mathbb{P}(V)}{G}]$ is a $\mathbb{C}^*$-torsor and using Proposition 1.1.\textit{ii} in \cite{EkedahlStack} one obtains $\{[\nicefrac{V_O}{G}]\}=(\mathbb{L}-1)\{[\nicefrac{\mathbb{P}(V)}{G}]\}$.
%  %
%  Similarly from $[\nicefrac{(\nicefrac{V_O}{K})}{H}]\rightarrow [\nicefrac{\mathbb{P}(V)}{H}]$, one gets $\{[\nicefrac{(\nicefrac{V_O}{K})}{H}]\}=(\mathbb{L}-1)\{[\nicefrac{\mathbb{P}(V)}{H}]\}$.
%  %
%  The statement follows from $[\nicefrac{(\nicefrac{V_O}{K})}{H}]=[\nicefrac{V_O}{G}]$.

  The natural map $[\nicefrac{\mathcal{L}}{G}]\rightarrow [\nicefrac{X}{G}]$ is a $\field^*$-torsor and using Proposition 1.1.\textit{ii} in \cite{EkedahlStack} one obtains $\{[\nicefrac{\mathcal{L}}{G}]\}=(\mathbb{L}-1)\{[\nicefrac{X}{G}]\}$.
  Similarly from $[\nicefrac{(\nicefrac{\mathcal{L}}{K})}{H}]\rightarrow [\nicefrac{X}{H}]$, one gets $\{[\nicefrac{(\nicefrac{\mathcal{L}}{K})}{H}]\}=(\mathbb{L}-1)\{[\nicefrac{X}{H}]\}$.
  The first part of the statement follows from $[\nicefrac{(\nicefrac{\mathcal{L}}{K})}{H}]=[\nicefrac{L}{G}]$.
  
  The second part of the statement follows by setting $\mathcal{L}=V\setminus \{O\}$, $X=\mathbb{P}(V)$ with the natural tautological bundle.% and using the previous lemma.
\end{proof}

\noindent
We now set up notations, definitions and remarks regarding the quotient of algebraic varieties by finite groups. (This is necessary to introduce next lemma.)

\noindent
Let $Y$ be a smooth complex quasi-projective algebraic variety and let $A$ be a finite group of automorphisms of $Y$. 
%
% We denote by $\Stab{y}{A}$ the stabilizer of $y\in Y$, that is the group of elements in $A$ that fix $y$.
% %
% Moreover, let $T_yY$ be the tangent space of $Y$ at the point $y$ and 
Let $Y\rightarrow \nicefrac{Y}{A}$ be the quotient map and $\bar{y}$ be the image of $y$.

\noindent
A nontrivial element of $G\subset \operatorname{GL}(V)$ is a pseudo-reflection if it fixes pointwise a codimension one hyperplane in $V$. (In the literature pseudo-reflections are sometimes called reflections.)
The pseudo reflection subgroup, $\pseudo{G}$, of $G\subset \operatorname{GL}(V)$ is its subgroup generated by pseudo-reflections.
The Chevalley-Shephard-Todd Theorem says that the quotient $\nicefrac{V}{G}$ is smooth if and only if $G=\pseudo{G}$.
%
%We call $\Singu{\nicefrac{V}{G}}$ the singular points of $\nicefrac{V}{G}$. 

\noindent
The well known Cartan Lemma says that for all the points $y$ of $Y$, the action of the stabilizer $\Stab{y}{A}$ of $y$ on $Y$ induces an action of $\Stab{y}{A}$ on the tangent space on $y$, $T_yY$.
Moreover the analytic germ $(\nicefrac{Y}{A}, \bar{y})$ is isomorphic to $(\nicefrac{T_yY}{A}, \bar{O})$, where $\bar{O}$ is the image of the origin $O\in T_yY$ under the quotient map $T_yY\rightarrow \nicefrac{T_yY}{A}$.
An easy consequence is that for all the points $y$ of $Y$, $\Stab{y}{A}\subseteq \operatorname{GL}_{\operatorname{dim}(Y)}$ and one also proves that $p$ is a singular point of $\nicefrac{V}{G}$, $p\in \Singu{\nicefrac{V}{G}}$, if and only if $\pseudo{\Stab{p}{G}}\neq\Stab{p}{G}$.

%\noindent
%We are going to use also the following fact. 
%%
%A proof can be found, for instance, in the lemma in Section 1.3 of \cite{Pouyanne}.
%\begin{lem}\label{lem-toric-sing-X-G}
%  Let $Y$ be a smooth quasi-projective complex algebraic variety and let $A$ be a finite group of automorphisms of $Y$. 
%  %
%  Let $Y\rightarrow \nicefrac{Y}{A}$ be the canonical quotient map and $\bar{y}$ be the image of $y$.
%  
%  %Let $X$ be an smooth quasi-projective algebraic variety over $\com$ and let $G$ be a finite group of automorphisms of $X$. 
%  %
%  %Let $\field=\com$.
%  %
%  Let $\bar{y}\in \nicefrac{Y}{A}$. 
%  %
%  The germ $(\nicefrac{Y}{A}, \bar{y})$ is a simplicial toroidal singularity (i.e. locally isomorphic, in the analytic topology, to the origin in a simplicial toric affine variety) if and only if the quotient $\nicefrac{\Stab{y}{A}}{\pseudo{\Stab{y}{A}}}$ in $T_yY$ is abelian.
%\end{lem}

Comparing the classes $\{[\nicefrac{\mathbb{P}(V)}{H}]\}$ and $\{\nicefrac{\mathbb{P}(V)}{H}\}$, we are going to prove that the Ekedahl invariants for every finite subgroup $G$ in $\Gl{3}{\field}$ are trivial.
We prove it by induction on $n$, where $G$ is a finite subgroup of $\Gl{n}{\field}$, and $n = 1, 2, 3$. 
The base case, $n = 1$, is covered by Proposition 3.2 in \cite{Ekedahl-inv}.
% 
% by \emph{induction} and the base of such induction is the item \textbf{2)} in Proposition \ref{pro-stateofart}: if $G$ is a finite subgroup of  then $\cl=\{*\}$ in $\grotcom{\Var{\field}}$.

\begin{pro}\label{pro-GinGL2-trivial}
  If $G$ is a finite subgroup of $\Gl{2}{\field}$ then $\cl=\{*\}$ in $\grotcom{\Var{\field}}$ and the Ekedahl invariants of $G$ are trivial.
\end{pro}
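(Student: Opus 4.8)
The plan is to compute the class of the stack quotient $\{[\nicefrac{\mathbb{P}(V)}{G}]\}$ and then invoke Lemma~\ref{lem-utile}. Since $G\subseteq\Gl{2}{\field}$ the representation $V$ is two-dimensional, so $\mathbb{P}(V)\cong\mathbb{P}^1$ and the lemma reads $\cl\,(1+\Lclass)=\{[\nicefrac{\mathbb{P}(V)}{G}]\}$ in $\grotcom{\Var{\field}}$. The scalar subgroup $K=G\cap\mathbb{G}_m$ is a finite subgroup of $\mathbb{G}_m$, hence cyclic, so Proposition~\ref{prop-equality-classes-PV} applies and lets me replace $G$ by its image $H\subseteq\operatorname{PGL}_2(\field)$: it suffices to prove $\{[\nicefrac{\mathbb{P}^1}{H}]\}=1+\Lclass$. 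Granting this, $\cl\,(1+\Lclass)=1+\Lclass=\{*\}\,(1+\Lclass)$, and since $1+\Lclass$ is invertible in $\grotcom{\Var{\field}}$ (write $1+\Lclass=\Lclass(1+\Lclass^{-1})$ and invert $1+\Lclass^{-1}$ by the geometric series $\sum_{k\ge0}(-1)^k\Lclass^{-k}$, which converges in the dimension filtration), I conclude $\cl=\{*\}$. Triviality of the Ekedahl invariants is then immediate, because $\operatorname{\mathcal{H}}^0(\{*\})=\{\inte\}$ and $\operatorname{\mathcal{H}}^k(\{*\})=0$ for $k\neq0$.

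To prove $\{[\nicefrac{\mathbb{P}^1}{H}]\}=1+\Lclass$ I would first show the stack and coarse quotients have the same class, $\{[\nicefrac{\mathbb{P}^1}{H}]\}=\{\nicefrac{\mathbb{P}^1}{H}\}$, by stratifying $\mathbb{P}^1$ according to the stabilizer. As $H$ is finite and every nonidentity element of $\operatorname{PGL}_2(\field)$ fixes at most two points of $\mathbb{P}^1$, the locus with nontrivial stabilizer is a finite $H$-invariant set, a union of orbits $O_1,\dots,O_r$ of points $p_1,\dots,p_r$, and its complement $U$ is open, dense and carries a free $H$-action. Applying the scissor relation of Definition~\ref{defi-grot-stack} gives $\{[\nicefrac{\mathbb{P}^1}{H}]\}=\{[\nicefrac{U}{H}]\}+\sum_{j=1}^r\{[\nicefrac{O_j}{H}]\}$. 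On the free locus the quotient stack is represented by the geometric quotient, so $\{[\nicefrac{U}{H}]\}=\{\nicefrac{U}{H}\}$, while $O_j\cong H/\Stab{p_j}{H}$ gives $[\nicefrac{O_j}{H}]\cong\B{\Stab{p_j}{H}}$ and hence $\{[\nicefrac{O_j}{H}]\}=\{\B{\Stab{p_j}{H}}\}$. Since the coarse quotient decomposes as $\{\nicefrac{\mathbb{P}^1}{H}\}=\{\nicefrac{U}{H}\}+\sum_{j=1}^r\{*\}$, subtracting yields $\{[\nicefrac{\mathbb{P}^1}{H}]\}-\{\nicefrac{\mathbb{P}^1}{H}\}=\sum_{j=1}^r\bigl(\{\B{\Stab{p_j}{H}}\}-\{*\}\bigr)$.

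It remains to kill each term on the right and to evaluate the coarse quotient. By the Cartan Lemma the action of $\Stab{p_j}{H}$ linearizes on the tangent line $T_{p_j}\mathbb{P}^1$, producing an injection $\Stab{p_j}{H}\hookrightarrow\Gl{1}{\field}$ (an element of finite order prime to $\operatorname{char}(\field)$ that is trivial to first order at $p_j$ is semisimple, hence the identity of $\operatorname{PGL}_2(\field)$). Thus each $\Stab{p_j}{H}$ is a finite cyclic subgroup of $\Gl{1}{\field}$, and since $\field$ contains a primitive root of unity of degree the exponent of $G$, the base case $n=1$ (Proposition~3.2 of \cite{Ekedahl-inv}) gives $\{\B{\Stab{p_j}{H}}\}=\{*\}$; hence every summand vanishes and $\{[\nicefrac{\mathbb{P}^1}{H}]\}=\{\nicefrac{\mathbb{P}^1}{H}\}$. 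Finally $\nicefrac{\mathbb{P}^1}{H}$ is a smooth projective rational curve (its function field is purely transcendental by L\"uroth), hence isomorphic to $\mathbb{P}^1$, so $\{\nicefrac{\mathbb{P}^1}{H}\}=1+\Lclass$, which closes the argument. The main obstacle is not the geometry---finiteness of the ramification locus and cyclicity of the stabilizers are elementary in this dimension---but the bookkeeping needed to justify the scissor decomposition of the stack quotient along the stabilizer stratification and the identification $[\nicefrac{O_j}{H}]\cong\B{\Stab{p_j}{H}}$ over a field that need not be algebraically closed.
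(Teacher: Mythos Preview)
Your proof is correct and follows essentially the same route as the paper's: stratify $\mathbb{P}^1$ by stabilizer, use Cartan's Lemma to embed each stabilizer in $\Gl{1}{\field}$ and invoke the $n=1$ base case, identify $\nicefrac{\mathbb{P}^1}{H}\cong\mathbb{P}^1$, and divide by $1+\Lclass$. The only cosmetic differences are that the paper cites the invertibility of $\Lclass^2-1$ (hence of $1+\Lclass$) rather than writing out a geometric series, and it does not explicitly invoke L\"uroth for $\nicefrac{\mathbb{P}^1}{H}\cong\mathbb{P}^1$.
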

\begin{proof}
% Let $H$ be the reduction of $G$ in $\operatorname{PGL}_2$:
% \begin{diagram}
% G       	&\rTo    	& H		&\rTo	&0\\
% \dInto^{i}  	&       	& \dTo^{j}	&	&\\
% \operatorname{GL}_2    	&\rTo   	& \operatorname{PGL}_2		&\rTo	&0.
% \end{diagram}
  Let $U$ be the non empty open subset of $\mathbb{P}^1$ where $H$ acts freely.
  We set $I=\mathbb{P}^1\setminus U$, the finite set of non trivial stabilizer points.
  Using property 1) in Definition \ref{defi-grot-stack}, we write 
  \[
    \{[\nicefrac{\mathbb{P}^1}{H}]\}=\{\nicefrac{U}{H}\}+\sum_{p\in \nicefrac{I}{H}}\{[\nicefrac{p}{\Stab{p}{H}}]\}=\{\nicefrac{U}{H}\}+\sum_{p\in \nicefrac{I}{H}}\{\B{\Stab{p}{H}}\}.
  \]
%   \begin{eqnarray*}
%     \{[\nicefrac{\mathbb{P}^1}{H}]\} &=&\{\nicefrac{U}{H}\}+\sum_{p}\{[\nicefrac{p}{\Stab{p}{H}}]\}\\
% 				      &=&\{\nicefrac{U}{H}\}+\sum_{p}\{\B{\Stab{p}{H}}\}.
%   \end{eqnarray*}
  We observe that the sum is taken over the orbits of the non trivial stabilizer points.
  Similarly, in the classical quotient,
  $$\{\nicefrac{\mathbb{P}^1}{H}\}=\{\nicefrac{U}{H}\}+\sum_{p\in \nicefrac{I}{H}}\{*\}.$$
  
  \noindent
  Therefore, comparing the latter expressions: 
  $$\{[\nicefrac{\mathbb{P}^1}{H}]\}=\{\nicefrac{\mathbb{P}^1}{H}\}+\sum_{p}(\{\B{\Stab{p}{H}}\} -\{*\}).$$
%   \begin{equation*}%\label{eq-P1-H-difference}
%     \{[\nicefrac{\mathbb{P}^1}{H}]\}=\{\nicefrac{\mathbb{P}^1}{H}\}+\sum_{p}(\{\B{\Stab{p}{H}}\} -\{*\}).
%   \end{equation*}
%   %
  % \begin{eqnarray*}
  %   \{[\nicefrac{\mathbb{P}^1}{H}]\}-\{\nicefrac{\mathbb{P}^1}{H}\}&=&\{\nicefrac{U}{H}\}+\sum_{p}\{\B{\Stab{p}{H}}\} - (\{\nicefrac{U}{H}\}+\sum_{p}\{*\})\\
  %   &=& \sum_{p}(\{\B{\Stab{p}{H}}\} -\{*\}).
  % \end{eqnarray*}
  Using (in order) that $\{[\mathbb{P}^1/G]\}=\left(1+\Lclass\right)\cl$, Proposition \ref{prop-equality-classes-PV}, the previous formula %(\ref{eq-P1-H-difference}) 
  and $\nicefrac{\mathbb{P}^1}{H}\cong\mathbb{P}^1$, one has 
  \[
    \{\B{G}\}(1+\Lclass)=\{\mathbb{P}^1\} + \sum_{p}(\{\B{\Stab{p}{H}}\} -\{*\}).
  \]
  Using Cartan's Lemma, $\Stab{p}{H}$ is a subgroup of $\operatorname{GL}_1$ and, hence, for Proposition 3.2 in \cite{Ekedahl-inv}, $\{\B{\Stab{p}{H}}\}=\{*\}$ for every $p\in \nicefrac{I}{H}$.
  Hence, $\{\B{G}\}(1+\Lclass)=\{\mathbb{P}^1\}$ and this implies $\{\B{G}\}=1$, because $\Lclass^n-1$ is invertible in $\grotcom{\Var{\field}}$, $\Lclass^2-1=(\Lclass-1)(\Lclass+1)$ and so $1+\Lclass$ is invertible too.
\end{proof}

Note that we actually proved that $\{[\nicefrac{\mathbb{P}^1}{H}]\}=\{\mathbb{P}^1\}$ in $\grotcom{\Var{\field}}$ and in a similar way we tackle the next case. 
We are going to use resolution of singularities and for this we require the characteristic of the base field to be zero.

\begin{teo}\label{thm-GinGL3-trivial}
	Assume $\operatorname{char}(k)=0$.
    If $G$ is a finite subgroup of $\Gl{3}{\field}$ then $\cl=1$ in $\grotcom{\Var{\field}}$ and the Ekedahl invariants of $G$ are trivial. 
\end{teo}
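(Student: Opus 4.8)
The plan is to mimic the structure of the $\Gl{2}{\field}$ argument from Proposition \ref{pro-GinGL2-trivial}, but now the quotient $\nicefrac{\mathbb{P}^2}{H}$ is a surface, so its singularities are $0$-dimensional points rather than being absent. First I would reduce to studying $H=\nicefrac{G}{K}\subset \operatorname{PGL}_3(\field)$ acting on $\mathbb{P}^2$, using Lemma \ref{lem-utile} to write $\cl(1+\Lclass+\Lclass^2)=\{[\nicefrac{\mathbb{P}^2}{G}]\}$ and Proposition \ref{prop-equality-classes-PV} to replace this by $\{[\nicefrac{\mathbb{P}^2}{H}]\}$. Just as before, I would split $\mathbb{P}^2$ into the open locus $U$ where $H$ acts freely and the lower-dimensional closed locus $I=\mathbb{P}^2\setminus U$ of points with nontrivial stabilizer. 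The new feature in dimension two is that $I$ may contain not only isolated points but also curves (where a pseudo-reflection of $\Stab{p}{H}$ fixes a whole line); along such curves the local stabilizer acts as a pseudo-reflection group, so by Chevalley-Shephard-Todd the quotient is smooth there and, exactly as in the $n=1$ case, the corresponding classifying-stack contributions $\{\B{\Stab{p}{H}}\}$ reduce to $\{*\}$ whenever $\Stab{p}{H}=\pseudo{\Stab{p}{H}}$.

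The genuinely new step is handling the finitely many isolated points $p$ where $\pseudo{\Stab{p}{H}}\neq\Stab{p}{H}$, i.e. the singular points of $\nicefrac{\mathbb{P}^2}{H}$. By Cartan's Lemma each such germ is isomorphic to $(\nicefrac{\com^2}{\Stab{p}{H}},\bar O)$ with $\Stab{p}{H}\subset\operatorname{GL}_2$, so these are quotient surface singularities. My strategy would be to compare the stack quotient $\{[\nicefrac{\mathbb{P}^2}{H}]\}$ with the ordinary quotient $\{\nicefrac{\mathbb{P}^2}{H}\}$, obtaining a correction term supported on the orbits of $I$ that is an integer combination of classes $\{\B{\Stab{p}{H}}\}-\{*\}$. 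To show each such term vanishes I would resolve the surface singularity: take a smooth proper resolution $X\rightarrow\nicefrac{\mathbb{P}^2}{H}$ (available since $\operatorname{char}(\field)=0$), compute $\{X\}$ via the birational relation $\{X\}=\{\nicefrac{\mathbb{P}^2}{H}\}+\sum(\{E_i\}-\{*\})$ where the $E_i$ are the exceptional divisors lying over each singular point, and exploit that for a quotient surface singularity the exceptional fibre is a tree of $\mathbb{P}^1$'s. Each such $\mathbb{P}^1$ contributes a class $\Lclass$, and by assembling these one shows the stacky correction and the resolution correction match, forcing the relevant classifying-stack differences to be polynomials in $\Lclass$ that are annihilated after cancellation.

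Concretely, the cleanest route is to prove the analogue of the observation following Proposition \ref{pro-GinGL2-trivial}, namely that $\{[\nicefrac{\mathbb{P}^2}{H}]\}$ equals $\{\nicefrac{\mathbb{P}^2}{H}\}$, and separately that $\{\nicefrac{\mathbb{P}^2}{H}\}$ is a polynomial in $\Lclass$ in $\grotcom{\Var{\field}}$. The latter holds because any smooth rational projective surface has class a $\inte$-polynomial in $\Lclass$, and resolving the finitely many rational quotient singularities only adds further $\Lclass$-terms coming from the rational exceptional curves; since $\nicefrac{\mathbb{P}^2}{H}$ is rational and its resolution $X$ is a smooth rational surface with $H^{\mathrm{odd}}(X;\inte)=0$ and torsion-free even cohomology, one gets $\{X\}\in\inte[\Lclass]$ and hence $\{\nicefrac{\mathbb{P}^2}{H}\}\in\inte[\Lclass]$. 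Feeding this back, $\cl(1+\Lclass+\Lclass^2)$ lies in $\inte[\Lclass]$, and because $1+\Lclass+\Lclass^2$ divides $\Lclass^3-1$, which is invertible in $\grotcom{\Var{\field}}$, the factor $1+\Lclass+\Lclass^2$ is itself invertible; dividing, one concludes $\cl\in\inte[\Lclass]$, and a dimension/degree count pins it down to $\{*\}=1$. Triviality of the Ekedahl invariants then follows since $\operatorname{\mathcal{H}}^k$ kills every positive power of $\Lclass$ and sends $1$ to $\{\inte\}$ in degree zero.

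The main obstacle I anticipate is the surface-singularity bookkeeping: I must verify that the exceptional divisor over each quotient singularity really contributes only $\Lclass$-powers (no odd or torsion cohomology surviving into $\Lo{\Ab}$) and that the stack-versus-scheme correction terms $\{\B{\Stab{p}{H}}\}-\{*\}$ cancel exactly against the resolution's exceptional contributions, rather than merely heuristically. Making this matching rigorous — i.e. controlling the classes $\{\B{\Stab{p}{H}}\}$ for the two-dimensional stabilizers simultaneously with the geometry of the resolution — is the delicate part, and is presumably where the hypothesis $\operatorname{char}(\field)=0$ (needed both for resolution of singularities and for the cohomological computation) is genuinely used.
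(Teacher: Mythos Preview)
Your overall architecture matches the paper's: reduce to showing $\{[\nicefrac{\mathbb{P}^2}{H}]\}=\{\mathbb{P}^2\}$ in two steps, first $\{[\nicefrac{\mathbb{P}^2}{H}]\}=\{\nicefrac{\mathbb{P}^2}{H}\}$ (stack equals scheme) and then $\{\nicefrac{\mathbb{P}^2}{H}\}=\{\mathbb{P}^2\}$. For the first step the paper is more direct than you are: every point stabilizer in $H\subset\operatorname{PGL}_3$ lies in $\Gl{2}{\field}$ by Cartan's lemma, and every line stabilizer lies in $\Gl{2}{\field}$ because $\Stab{L}{\operatorname{PGL}_3(\field)}\cong\Gl{2}{\field}\ltimes\field^2$; then Proposition~\ref{pro-GinGL2-trivial} kills all the correction terms $\{\B{\Stab{p}{H}}\}-\{*\}$ at once. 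You do not need resolutions for this step, and your invocation of Chevalley--Shephard--Todd and ``the $n=1$ case'' for the curve locus is a detour (pseudo-reflection groups in $\Gl{2}{\field}$ are not in general cyclic, so the $n=1$ result does not apply; what does apply is precisely the $n=2$ result you already have).

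The genuine gap is in your second step. You only establish $\{\nicefrac{\mathbb{P}^2}{H}\}\in\inte[\Lclass]$ and then assert that dividing by the invertible element $1+\Lclass+\Lclass^2$ keeps you in $\inte[\Lclass]$. This is false: invertibility in $\grotcom{\Var{\field}}$ does not make the subring $\inte[\Lclass]$ closed under that division, and your ``dimension/degree count'' is not an argument. What is actually needed is the exact equality $\{\nicefrac{\mathbb{P}^2}{H}\}=\{\mathbb{P}^2\}$, i.e.\ that the $\Lclass$-coefficient is $1$. The paper supplies this via Castelnuovo's theorem: since $X$ is a smooth rational surface it also admits a birational morphism $\pi':X\to\mathbb{P}^2$, and a Leray spectral sequence for each of $\pi$ and $\pi'$ shows that the number $n$ of exceptional components over $\nicefrac{\mathbb{P}^2}{H}$ and the number $m$ over $\mathbb{P}^2$ both equal $\dim_{\rat}\Hom{2}{X}{\rat}-1$. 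Since each exceptional tree of $\mathbb{P}^1$'s contributes $n_y\Lclass$, one gets $\{\nicefrac{\mathbb{P}^2}{H}\}=\{X\}-n\Lclass=\{X\}-m\Lclass=\{\mathbb{P}^2\}$. An equivalent shortcut you could have invoked is the virtual Poincar\'e polynomial: $p_{\nicefrac{\mathbb{P}^2}{H}}(t)=p_{\mathbb{P}^2}(t)$ because $H$ acts trivially on $\Hom{*}{\mathbb{P}^2}{\rat}$, and once you know $\{\nicefrac{\mathbb{P}^2}{H}\}$ is a polynomial in $\Lclass$ this pins the coefficients down; but that argument, or something like it, has to be made.
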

\begin{proof}
  Using equation $\{[\mathbb{P}^2/G]\}=\left(1+\Lclass+\Lclass^{2}\right)\cl$ and Proposition \ref{prop-equality-classes-PV}, we know that $\{\B{G}\}\{\mathbb{P}^2\}=\{[\nicefrac{\mathbb{P}^2}{H}]\}$.
  Since $\{\mathbb{P}^2\}$ is invertible in $\grotcom{\Var{\field}}$, it is sufficient to prove that $\{[\nicefrac{\mathbb{P}^2}{H}]\}=\{\mathbb{P}^2\}$.
  
  Let $U$ be the open subset of $\mathbb{P}^2$ where $H$ acts freely and let $C$ be the complement of $U$ in $\mathbb{P}^2$.
  We denote by $C_0$ and $C_1$ respectively the dimension zero and the dimension one closed subsets of $C$ so that $C=C_0\sqcup C_1$.

  One observes that $[\nicefrac{C_0}{H}]$ is the disjoint union of a finite number of quotient stacks $[\nicefrac{O_i}{H}]$ where $O_i$ are the orbits of $P_i\in C_0$ under the action of $H$.
  We note that $[\nicefrac{O_i}{H}]=[\nicefrac{P_i}{\Stab{P_i}{H}}]=\B{\Stab{P_i}{H}}$.
  By Cartan's Lemma, $\Stab{P_i}{H}$ is a subgroup of $\Gl{2}{\field}$ and then, by using Proposition \ref{pro-GinGL2-trivial}, 
  $$\{[\nicefrac{O_i}{H}]\}=\{\B{\Stab{P_i}{H}}\}=\{\nicefrac{O_i}{H}\}=\{*\}.$$
  Therefore $\{[\nicefrac{C_0}{H}]\}=\{\nicefrac{C_0}{H}\}$.

  We observe that $\{[\nicefrac{S}{H}]\}=\{\nicefrac{S}{H}\}$ holds for every finite stable subset $S$ of $\mathbb{P}^2$ with the same argument.

  The set $C_1$ is the union of a finite number of lines $L_i$. 
  %
  %We denote by $I$, the subset of $C_1$, made by the intersection points of those lines $L_i$.
  We denote by $I$ the union of pairwise intersections $L_i \cap L_j$, for $i \neq j$.
  %We denote by $I$, the subset of $C_1$, made by the intersection points of those lines $L_i$.
  %
  We also denote by $C_1^*$ the complement of $I$ in $C_1$.
  
  \noindent
  Let $L$ be a line in $C_1$ and $S_L=\Stab{L}{H}$.
  By a change of coordinate we can assume $L$ to be the line at infinity of $\mathbb{P}^2$ and so  $S_L\subset \operatorname{GL}_2$. Another way to rephrase this is to observe that $\Stab{L}{\operatorname{PGL}_3(\field)}\cong \operatorname{GL}_2(\field) \ltimes \field^2$.
  Using $S_L\subset \operatorname{GL}_2(\field)$ and Proposition \ref{pro-GinGL2-trivial}, one gets $\{[\nicefrac{L}{S_L}]\}=\{\nicefrac{L}{S_L}\}$.

  \noindent
  We set $L'=L \cap C_1^*$. 
  Then $[\nicefrac{L}{S_L}]=[\nicefrac{L'}{S_L}]\cup [\nicefrac{L\setminus L'}{S_L}]$.
  By what we said in the zero-dimensional case $\{[\nicefrac{L\setminus L'}{S_L}]\}=\{\nicefrac{L\setminus L'}{S_L}\}$ and so $\{[\nicefrac{L'}{S_L}]\}=\{\nicefrac{L'}{S_L}\}$.
  We call $O_j'$ the orbit of $L_j'$ under $H$. 
  Since $C_1^*$ is the disjoint union of a finite number of orbits $O_j'$, then
  \begin{eqnarray*}
    \{[\nicefrac{C_1}{H}]\}&=&\{[\nicefrac{C_1^*}{H}]\}+\{[\nicefrac{I}{H}]\}=\sum_j\{[\nicefrac{O_j'}{H}]\}+\{[\nicefrac{I}{H}]\}\\
    &=&\sum_j\{[\nicefrac{L_j'}{S_{L_j}}]\}+\{[\nicefrac{I}{H}]\}\\
    &=&\sum_j\{\nicefrac{L_j'}{H}\}+\{\nicefrac{I}{H}\}=\{\nicefrac{C_1}{H}\}.  
  \end{eqnarray*}
  Summarizing the proven facts, one has
  \[
    \{[\nicefrac{\mathbb{P}^2}{H}]\}=\{[\nicefrac{U}{H}]\}+\{[\nicefrac{C_0}{H}]\}+\{[\nicefrac{C_1}{H}]\}=\{\nicefrac{U}{H}\}+\{\nicefrac{C_0}{H}\}+\{\nicefrac{C_1}{H}\}= \{\nicefrac{\mathbb{P}^2}{H}\}.
  \]
  Therefore there remains to prove that $\{\nicefrac{\mathbb{P}^2}{H}\}=\{\mathbb{P}^2\}$.
  For this purpose let $X$ be a resolution of the singularities of $\nicefrac{\mathbb{P}^2}{H}$, $\pi:X\rightarrow \nicefrac{\mathbb{P}^2}{H}$.
   Castelnuovo's theorem implies every unirational surface is rational in characteristic zero (see Chapter V of \cite{HartshorneAlgebraicGeometry})
  and one can construct a birational morphism, $\pi':X\rightarrow \mathbb{P}^2$.
%   \begin{diagram}
%     X &\rTo^{\pi} & \nicefrac{\mathbb{P}^2}{H}\\
%     \dTo_{\pi'} & & \\
%     \mathbb{P}^2 & & 
%   \end{diagram}
  %
  %It is well know that 
  The quotient singularities of $\nicefrac{\mathbb{P}^2}{H}$ are rational singularities and the exceptional divisor $D_y$ of $y\in \Singu{\nicefrac{\mathbb{P}^2}{H}}$ is a tree of $\mathbb{P}^1$ (see for instance \cite{SpivakovskySandwichedsingularities}).
%   : we mean that each irreducible component of $D_y$ is $\mathbb{P}^1$ and the graph associated to the resolution of $y$ is a tree.
%   %
  This implies that $D_y=\cup_{j=1}^{n_y} \mathbb{P}^1$, where $n_y$ is the number of irreducible components of $D_y$. 
  Then $\{D_y\}=n_y\{\mathbb{P}^1\}-\sum \{*\}$.
  (The sum runs over the intersection points of the copies of $\mathbb{P}^1$.)
  
  Since the graph of the resolution is a tree, then there are exactly $n_y-1$ intersection points in $\sum \{*\}$. Hence $\{D_y\}=n_y\{\mathbb{P}^1\} -(n_y-1)\{*\}=n_y\Lclass+\{*\}$.
  Then,
  \begin{eqnarray*}
    \{\nicefrac{\mathbb{P}^2}{H}\}&=&\{X\}-\sum_{y}\left(\{D_y\} -\{y\}\right)=\{X\}-\sum_{y}\left(n_y\Lclass+\{*\} -\{*\}\right)\\
				  &=&\{X\}-\Lclass\sum_{y}n_y=\{X\}-\Lclass n,
  \end{eqnarray*}
%   \begin{eqnarray*}
%     \{\nicefrac{\mathbb{P}^2}{H}\}&=&\{X\}-\sum_{y}\left(\{D_y\} -\{y\}\right)\\
% 				  &=&\{X\}-\sum_{y}\left(n_y\Lclass+1 -1\right)\\
% 				  &=&\{X\}-\Lclass\sum_{y}n_y\\
% 				  &=&\{X\}-\Lclass n,
%   \end{eqnarray*}
  where $n=\sum_{y}n_y$ is the number of irreducible components in the full exceptional divisor $D=\cup_y D_y$.
  Similarly, one gets $\{\mathbb{P}^2\}=\{X\}-\Lclass m$,
%   \begin{eqnarray*}
%     \{\mathbb{P}^2\}=\{X\}-\Lclass m,
%   \end{eqnarray*}
   where $m$ is the number of irreducible components in the full exceptional divisor $E$ of the resolution $X \xrightarrow{\pi'} \mathbb{P}^2$.  
  
  We shall prove that $m=n$.
  Let us consider the following spectral sequence from the map $\pi:X\rightarrow \nicefrac{\mathbb{P}^2}{H}$:
  \[
    E_2^{i,j}=\Hom{i}{\nicefrac{\mathbb{P}^2}{H}}{R^{j}\pi \ldotp \mathbb{Q}_{X}}\Rightarrow \Hom{i+j}{X}{\mathbb{Q}}.
  \]
  Since the map is an isomorphism away from a finite number of points, $R^{j}\pi \ldotp \mathbb{Q}_{X}$ 
  is zero elsewhere but those points.
  If $y$ is one of those points, %$(R^{j}\pi \ldotp \mathbb{Q}_{X})_y=\Hom{i}{\pi^{-1}(y)}{\mathbb{Q}}$ and so $\Hom{0}{\pi^{-1}(y)}{\mathbb{Q}}=\mathbb{Q}^n$ and for $i>0$, $\Hom{i}{\pi^{-1}(y)}{\mathbb{Q}}=0$.
  then $\Hom{i}{\nicefrac{\mathbb{P}^2}{H}}{(R^{j}\pi \ldotp \mathbb{Q}_{X})_y}$ equals $\Hom{i}{\pi^{-1}(y)}{\mathbb{Q}}$ and so $\Hom{0}{\pi^{-1}(y)}{\mathbb{Q}}=\mathbb{Q}^n$ and for $i>0$, $\Hom{i}{\pi^{-1}(y)}{\mathbb{Q}}=0$.
  The spectral sequence degenerates and then we obtain
  $$0 \rightarrow \rat \rightarrow \Hom{2}{X}{\rat} \rightarrow \rat^n \rightarrow 0.$$ 
  This implies $\Hom{2}{X}{\rat}=\rat^{n+1}$.
  Similarly, for $\pi':X\rightarrow \mathbb{P}^2$, one gets $\Hom{2}{X}{\rat}=\rat^{m+1}$ and, thus, the equality $m=n$.
\end{proof}

It is natural to ask if the previous theorem remains valid if $\operatorname{GL}_{3}(\field)$ is replaced by $\operatorname{GL}_{n}(\field)$, for $n \geq 4$. 
For large enough $n$, it will fail.  
For $n = 4$ one can modify the first part of the proof of Theorem 2.4 to show that  $\{[\nicefrac{\mathbb{P}^3}{H}]\}=\{\nicefrac{\mathbb{P}^3}{H}\}\in \grotcom{\Var{\field}}$. 
We do not know whether or not $\{\nicefrac{\mathbb{P}^3}{H}\}=\{\mathbb{P}^3\}$ or whether Theorem \ref{thm-GinGL3-trivial} remains valid for $n = 4$.
Indeed, proving that $\{\nicefrac{\mathbb{P}^3}{H}\}=\{\mathbb{P}^3\}$ involves the study of the resolution of the quotients $\nicefrac{\mathbb{A}^{3}_{\field}}{A}$ for certain $A\subset \operatorname{GL}_{3}(\field)$.

%For $G\subset \operatorname{GL}_{4}(\field)$, one could show that $\{[\nicefrac{\mathbb{P}^3}{H}]\}=\{\nicefrac{\mathbb{P}^3}{H}\}\in \grotcom{\Var{\field}}$, but there are difficulties to continue such induction: indeed, if it is true, proving that $\{\nicefrac{\mathbb{P}^3}{H}\}=\{\mathbb{P}^3\}$ involves the study of the resolution of the quotients $\nicefrac{\mathbb{A}^{3}_{\field}}{A}$ for certain $A\subset \operatorname{GL}_{3}(\field)$.

	%The assumption of $\field$ being algebraically closed can be weakened by requiring that for every subgroup $S\subset H$ stabilizer of certain subvarieties of $\mathbb{P}^2$, $\zeta_S\in \field$ where $\zeta_S$ is a primitive root of unity of degree the exponent of $S$.

%\newpage
\section{Finite groups with abelian projective quotient}\label{sec-abelian-subgroups}
In this section the ground field is assume to be $\mathbb{C}$.
In what follows we are going to use the following fact. 

\begin{lem}\label{lem-toric-sing-X-G}
  Let $Y$ be a smooth quasi-projective complex algebraic variety and let $A$ be a finite group of automorphisms of $Y$. 
  Let $Y\rightarrow \nicefrac{Y}{A}$ be the canonical quotient map and $\bar{y}$ be the image of $y$.
  
  %Let $X$ be an smooth quasi-projective algebraic variety over $\com$ and let $G$ be a finite group of automorphisms of $X$. 
  %
  %Let $\field=\com$.
  %
  Let $\bar{y}\in \nicefrac{Y}{A}$. 
  The germ $(\nicefrac{Y}{A}, \bar{y})$ is a simplicial toroidal singularity (i.e. locally isomorphic, in the analytic topology, to the origin in a simplicial toric affine variety) if and only if the quotient $\nicefrac{\Stab{y}{A}}{\pseudo{\Stab{y}{A}}}$ in $T_yY$ is abelian.
\end{lem}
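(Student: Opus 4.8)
The plan is to reduce everything to a linear quotient and then recognize when such a quotient is a simplicial toric germ. First I would invoke Cartan's Lemma, quoted above, to replace the germ $(\nicefrac{Y}{A},\bar y)$ by the linear germ $(\nicefrac{V}{G'},\bar O)$, where $V=T_yY$ and $G'=\Stab{y}{A}$ acts linearly on $V$. Writing $P=\pseudo{G'}$ for the (normal) pseudo-reflection subgroup, the Chevalley--Shephard--Todd Theorem gives $\nicefrac{V}{P}\cong\mathbb{A}^n$ with $n=\dim V$, so that
\[
  (\nicefrac{Y}{A},\bar y)\;\cong\;(\nicefrac{V}{G'},\bar O)\;\cong\;\bigl(\nicefrac{(\nicefrac{V}{P})}{Q},\bar O\bigr),
\]
where $Q=\nicefrac{G'}{P}=\nicefrac{\Stab{y}{A}}{\pseudo{\Stab{y}{A}}}$ is exactly the group in the statement. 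Since $\bar O$ is fixed by $Q$, I would linearize the $Q$-action near $\bar O$ by averaging, obtaining a linear representation $Q\subset\Gl{n}{\com}$; moreover this action is \emph{small}, i.e. $Q$ contains no pseudo-reflections, because all reflections of $G'$ have already been absorbed into $P$. Thus the whole statement reduces to: for a small finite group $Q\subset\Gl{n}{\com}$, the germ $\nicefrac{\com^n}{Q}$ is simplicial toroidal if and only if $Q$ is abelian.

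For the easy implication, if $Q$ is abelian then it is simultaneously diagonalizable, hence conjugate to a subgroup of the diagonal torus $(\com^*)^n$; the quotient $\nicefrac{\com^n}{Q}$ is then an affine toric variety whose defining cone is simplicial, so the germ is simplicial toroidal. This direction uses only linear algebra and the toric description of diagonal finite quotients and does not even require smallness.

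The content is in the converse, and this is where I expect the main obstacle. Assume the germ $\nicefrac{\com^n}{Q}$ is simplicial toroidal. I would use the standard fact from toric geometry that an affine simplicial toric singularity is an abelian quotient singularity, i.e. isomorphic to $\nicefrac{\com^n}{N}$ for some finite abelian $N\subset\Gl{n}{\com}$ acting smally (as a subgroup of the torus). Now both $Q$ and $N$ are small finite subgroups of $\Gl{n}{\com}$ presenting the same germ, so Prill's rigidity theorem for quotient singularities forces $Q$ and $N$ to be conjugate in $\Gl{n}{\com}$; in particular $Q$ is abelian. The delicate points I would need to nail down are precisely (i) that $Q=\nicefrac{G'}{P}$ really acts smally on $\nicefrac{V}{P}$ --- the one place where the choice of $P$ as the \emph{full} reflection subgroup is essential --- and (ii) the correct citations for the simplicial-toric$\,\Leftrightarrow\,$abelian-quotient dictionary and for Prill's theorem, since the equivalence in the lemma rests entirely on the rigidity that the small group is an analytic invariant of the germ.
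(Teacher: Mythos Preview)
The paper does not actually prove this lemma: immediately after the statement it writes ``A proof can be found, for instance, in the lemma in Section~1.3 of \cite{Pouyanne}'' and moves on. So there is no in-paper argument to compare your proposal against.

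Your outline is the standard route and is correct. The reduction via Cartan's Lemma and Chevalley--Shephard--Todd to a linear action of $Q=\nicefrac{\Stab{y}{A}}{\pseudo{\Stab{y}{A}}}$ on $\com^n$ is right, and you have correctly isolated the two genuine technical points. For (i), smallness of $Q$ on $\nicefrac{V}{P}\cong\com^n$ follows because if some $q\in Q$ acted as a pseudo-reflection there, then the preimage of $\langle q\rangle$ in $G'$ would have smooth quotient at the origin, hence (by the graded form of Chevalley--Shephard--Todd) be generated by pseudo-reflections and already lie in $P$, a contradiction. For (ii), the dictionary between simplicial affine toric varieties and finite diagonal abelian quotients is standard (e.g.\ Section~2.2 of \cite{Fu}), and Prill's rigidity theorem is exactly the input needed to force $Q$ to be conjugate to the abelian $N$. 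Nothing is missing from your plan beyond writing out these two verifications.
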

\noindent
A proof can be found, for instance, in the lemma in Section 1.3 of \cite{Pouyanne}.

As in the previous section $G$ is a subgroup of $\operatorname{GL}_n(\com)$ and $H$ is its quotient in $\operatorname{PGL}_n(\com)$.
If $H$ is abelian and if the singularities of $\nicefrac{\mathbb{P}^{n-1}}{H}$ are zero dimensional, then, for the previous lemma, such singularities are toroidal. 
Under this conditions, the Ekedahl invariants satisfy a recursive equation.
\begin{teo}\label{thm-abelian-P-H-X}
  %Let $G$ be a finite subgroup of $\Gl{n}{\com}$ and let $H$ be the image of $G$ under the canonical projection $\Gl{n}{\com}\rightarrow \operatorname{PGL}_n(\com)$.
  
  %\noindent
  %VERSION FOR INTRODUCTION, I MENTION THE TOROIDAL SINGULARITIES.
%  If $H$ is abelian and if $\nicefrac{\mathbb{P}^{n-1}_{\mathbb{C}}}{H}$ has only zero dimensional singularities, then each singularity is toroidal and for every integer $k$
%  \begin{equation}\label{eq-recurrence-ek}
%    \eke{k}{G}+\eke{k+2}{G}+\dots+\eke{k+2(n-1)}{G}=\{\Hom{-k}{X}{\inte}\},
%  \end{equation}
%  where $X\rightarrow\nicefrac{\mathbb{P}^{n-1}}{H}$ is a smooth and proper toric resolution with normal crossing toric exceptional divisors.

  If $H$ is abelian and if $\nicefrac{\mathbb{P}^{n-1}_{\mathbb{C}}}{H}$ has only zero dimensional singularities, then for every integer $k$
  \begin{equation}\label{eq-recurrence-ek}
    \eke{k}{G}+\eke{k+2}{G}+\dots+\eke{k+2(n-1)}{G}=\{\Hom{-k}{X}{\inte}\},
  \end{equation}
  where $X\rightarrow\nicefrac{\mathbb{P}^{n-1}}{H}$ is a smooth and proper resolution with normal crossing toric exceptional divisors.

%  where $X$ is a smooth and proper resolution of $\nicefrac{\mathbb{P}^{n-1}}{H}$ constructed by a sequence of toric blow up of the singularities of $\nicefrac{\mathbb{P}^{n-1}}{H}$.
\end{teo}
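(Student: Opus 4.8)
The plan is to evaluate the map $\operatorname{\mathcal{H}}^{-k}$ on the class of the stack quotient $\{[\nicefrac{\mathbb{P}^{n-1}}{H}]\}$ and to recognise the outcome as the cohomology of the resolution $X$. First I would reduce the left-hand side. Lemma \ref{lem-utile} together with Proposition \ref{prop-equality-classes-PV} gives $\{[\nicefrac{\mathbb{P}^{n-1}}{H}]\}=\cl\cdot\{\mathbb{P}^{n-1}\}$ in $\grotcom{\Var{\com}}$, and $\{\mathbb{P}^{n-1}\}=\sum_{j=0}^{n-1}\Lclass^{j}$. From the defining formula $\operatorname{\mathcal{H}}^{a}(\nicefrac{\{Y\}}{\Lclass^{m}})=\{\Hom{a+2m}{Y}{\inte}\}$ one reads off the shift $\operatorname{\mathcal{H}}^{a}(\Lclass^{j}\beta)=\operatorname{\mathcal{H}}^{a-2j}(\beta)$, so applying $\operatorname{\mathcal{H}}^{-k}$ yields
\[
  \operatorname{\mathcal{H}}^{-k}\!\left(\{[\nicefrac{\mathbb{P}^{n-1}}{H}]\}\right)=\sum_{j=0}^{n-1}\operatorname{\mathcal{H}}^{-k-2j}(\cl)=\sum_{j=0}^{n-1}\eke{k+2j}{G},
\]
which is precisely the left-hand side of \eqref{eq-recurrence-ek}. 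Hence it suffices to identify $\operatorname{\mathcal{H}}^{-k}(\{[\nicefrac{\mathbb{P}^{n-1}}{H}]\})$ with $\{\Hom{-k}{X}{\inte}\}$.

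Next I would check that $\{[\nicefrac{\mathbb{P}^{n-1}}{H}]\}=\{\nicefrac{\mathbb{P}^{n-1}}{H}\}$ in $\grotcom{\Var{\com}}$, exactly as in the proof of Theorem \ref{thm-GinGL3-trivial} but now exploiting that $H$ is abelian. Stratifying $\mathbb{P}^{n-1}$ by orbit type and using the scissor relations of Definition \ref{defi-grot-stack}, the discrepancy between the stack and the coarse quotient is concentrated on the strata with nontrivial stabiliser; over such a stratum the stack is a gerbe banded by the stabiliser $A$, contributing $\{\B{A}\}$ times the class of the stratum. Since every stabiliser $A\subseteq H$ is abelian, the result quoted just before Theorem \ref{thm-GinGL3-trivial} gives $\{\B{A}\}=\{*\}$, so the stack and coarse classes of each stratum coincide and therefore so do the total classes. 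Thus the problem becomes the computation of $\operatorname{\mathcal{H}}^{-k}(\{\nicefrac{\mathbb{P}^{n-1}}{H}\})$.

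By Lemma \ref{lem-toric-sing-X-G}, the hypotheses ($H$ abelian, isolated singularities) force each singular point $\bar y$ of $\nicefrac{\mathbb{P}^{n-1}}{H}$ to be a simplicial toroidal singularity, so a toric resolution $\pi\colon X\to\nicefrac{\mathbb{P}^{n-1}}{H}$ with normal crossing toric exceptional fibres $D_{\bar y}=\pi^{-1}(\bar y)$ exists. I would then compare $X$ with the coarse quotient by the Leray spectral sequence
\[
  E_2^{i,j}=\Hom{i}{\nicefrac{\mathbb{P}^{n-1}}{H}}{R^{j}\pi_{*}\rat_X}\ \Rightarrow\ \Hom{i+j}{X}{\rat},
\]
exactly as in Theorem \ref{thm-GinGL3-trivial}: since $\pi$ is an isomorphism off the finite singular set, $R^{j}\pi_{*}\rat_X$ is a skyscraper with stalk $\Hom{j}{D_{\bar y}}{\rat}$ at $\bar y$, which expresses the cohomology of $X$ in terms of that of $\nicefrac{\mathbb{P}^{n-1}}{H}$ and of the fibres $D_{\bar y}$. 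Working integrally so as to retain the torsion detected by $\operatorname{\mathcal{H}}$, this is the same bookkeeping as applying $\operatorname{\mathcal{H}}^{-k}$ to the scissor identity $\{\nicefrac{\mathbb{P}^{n-1}}{H}\}=\{X\}-\sum_{\bar y}(\{D_{\bar y}\}-\{*\})$, so that the desired equality is reduced to understanding $\operatorname{\mathcal{H}}^{-k}(\{D_{\bar y}\})$.

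The crux, and the step I expect to be the main obstacle, is the determination of the cohomology of the normal crossing toric exceptional divisors $D_{\bar y}$, which is the content of the technical Theorem \ref{thm-rational-cohomology-exceptional-divisor-articolo}. Here I would run the Mayer–Vietoris spectral sequence attached to the decomposition of $D_{\bar y}$ into its smooth compact toric components and their intersections, so that $\operatorname{\mathcal{H}}^{-k}(\{D_{\bar y}\})$ is reduced to the combinatorics of the resolving fan of the simplicial cone at $\bar y$. Feeding the resulting classes back into the previous step and comparing with the reduction of the left-hand side then produces the recurrence \eqref{eq-recurrence-ek}. This toric, purely combinatorial computation is delicate and does not obviously survive an increase in the dimension or complexity of the fans involved, which is precisely why the companion Heisenberg analysis is ultimately carried out only for $p=5$.
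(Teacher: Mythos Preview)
Your first two paragraphs are correct and match the paper: reduce the left-hand side via Lemma \ref{lem-utile} and Proposition \ref{prop-equality-classes-PV}, then pass from the stack quotient to the coarse quotient using that every stabiliser is abelian (so $\{\B{A}\}=\{*\}$), and finally write $\{\nicefrac{\mathbb{P}^{n-1}}{H}\}=\{X\}-\sum_{y}(\{D_y\}-\{y\})$ via the toric resolution. This is exactly the content of items \textbf{i)} and \textbf{ii)} of Lemma \ref{lem-technical-lemma-betti-numebers}.

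The gap is in your last paragraph. You say the crux is ``the determination of the cohomology of the normal crossing toric exceptional divisors $D_{\bar y}$, which is the content of the technical Theorem \ref{thm-rational-cohomology-exceptional-divisor-articolo}''. This is a misreading: Theorem \ref{thm-rational-cohomology-exceptional-divisor-articolo} is a computation specific to the Heisenberg case $p=5$ (it shows $\Hrel{5}{X_5}{E}{\inte}=0$) and plays no role in the proof of Theorem \ref{thm-abelian-P-H-X}, which is valid for all $n$. No explicit computation of $\Hom{*}{D_y}{\inte}$ is needed here.

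What the paper actually does to finish is a torsion/free separation. The intersections $D_y^{i_1}\cap\dots\cap D_y^{i_q}$ are smooth proper toric varieties, whose integral cohomology is \emph{torsion free} and concentrated in even degrees (Section 5.2 of \cite{Fu}). Hence in $\mathcal{H}^{-k}(\{\nicefrac{\mathbb{P}^{n-1}}{H}\})=\mathcal{H}^{-k}(\{X\})-\sum_y\mathcal{H}^{-k}(\{D_y\}-\{y\})$, the terms coming from the $D_y$ contribute only classes of free abelian groups. On the other side, Theorem \ref{thm-ekedahl} says that $\eke{i}{G}$ for $i>0$ is an integer combination of classes of \emph{finite} abelian groups, while $\eke{0}{G}=\{\inte\}$ and $\eke{i}{G}=0$ for $i<0$. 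So the only possible torsion on the right is $\{\operatorname{tor}\Hom{-k}{X}{\inte}\}$, and it remains to check that the free ranks match; this is exactly the Betti-number identity of Lemma \ref{lem-technical-lemma-betti-numebers}.\textbf{iii)}, obtained from $p_{\nicefrac{\mathbb{P}^{n-1}}{H}}(t)=p_{\mathbb{P}^{n-1}}(t)$. The argument is therefore general and does not depend on any fan combinatorics; the dimension-sensitive computation you allude to enters only later, in the application to $H_5$.
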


\noindent
We first show a technical lemma.
We denote by $p_X(t)=\sum_{i\geq 0} \beta^{i}(X)t^i$ the virtual Poincar\'e polynomial of a complex algebraic scheme $X$, where $\beta^{i}(X)=\operatorname{dim}(\Hom{i}{X}{\rat})$ is the $i$-th Betti number of $X$ (here we use the notation of Section 4.5 of \cite{Fu}).
For every smooth projective toric variety $Y$, $p_Y(t)$ is an even polynomial (see Section 5.2 of \cite{Fu}). 

\noindent
We observe that $\Hom{*}{\nicefrac{\mathbb{P}^{n-1}}{H}}{\rat}=\Hom{*}{\mathbb{P}^{n-1}}{\rat}^{H}$ because $H$ is a finite group on the smooth scheme $\mathbb{P}^{n-1}$ and the cardinality of $H$ is invertible in $\rat$. 
We remark that $\Hom{*}{\mathbb{P}^{n-1}}{\rat}=\Hom{*}{\mathbb{P}^{n-1}}{\inte}\otimes \rat$. 
Any element of $\Hom{*}{\mathbb{P}^{n-1}}{\rat}$ can be written as $\sum_{i=1}^{n-1} a_i h^i$ where $h$ is the first Chern class of $O_{\mathbb{P}^{n-1}}(1)$ and $a_i\in \rat$. 
Now, we observe the action is trivial on the coefficients $a_i$ and it comes from a linear action in $\operatorname{GL}_n$ and so $\Hom{*}{\mathbb{P}^{n-1}}{\rat}^{H}=\Hom{*}{\mathbb{P}^{n-1}}{\rat}$.
Hence, if $G$ is a finite subgroup of $\operatorname{GL}_n$, then $p_{\nicefrac{\mathbb{P}^{n-1}}{H}}(t)=p_{\mathbb{P}^{n-1}}(t)$.
%
%Indeed $\Hom{*}{\nicefrac{\mathbb{P}^{n-1}}{H}}{\rat}=\Hom{*}{\mathbb{P}^{n-1}}{\rat}^{H}=\Hom{*}{\mathbb{P}^{n-1}}{\rat}$.

\begin{lem}\label{lem-technical-lemma-betti-numebers}
  Let $G$, $H$, $\nicefrac{\mathbb{P}^{n-1}}{H}$ and $X$ satisfy the hypothesis of Theorem \ref{thm-abelian-P-H-X}.
  Then:
  \begin{description}
    \item[i)] $\cl(1+\Lclass+\dots+\Lclass^{n-1})=\{\nicefrac{\mathbb{P}^{n-1}}{H}\}$ and, in particular, 
      \begin{equation}\label{eq-recurrence-ek-quasi}
	\eke{k}{G}+\eke{k+2}{G}+\dots+\eke{k+2(n-1)}{G}=\mathcal{H}^{-k}\left(\{\nicefrac{\mathbb{P}^{n-1}}{H}\}\right).
      \end{equation}
      
    \item[ii)] 
      Every singularity of $\nicefrac{\mathbb{P}^{n-1}}{H}$ is a toroidal singularity and 
      \begin{equation}\label{eq-P-n-1-H-singolarita-isolate-abeliano}
	\{\nicefrac{\mathbb{P}^{n-1}}{H}\}=\{X\}-\sum_y\left(\{D_y\}-\{y\}\right),
      \end{equation}
%       \]
% 	\{\nicefrac{\mathbb{P}^{n-1}}{H}\}=\{X\}-\sum_y\left(\{D_y\}-\{y\}\right),
%       \]
      where the sum runs over $y\in \Singu{\nicefrac{\mathbb{P}^{n-1}}{H}}$;
      $D_y=\pi^{-1}(y)$ is the exceptional toric divisor of the resolution of $y$ with irreducible components decomposition $D_y=D^1_y\cup \dots \cup D^r_y$; $\{D_y\}=\sum_{q\geq 1}(-1)^{q+1}\sum_{i_1<\dots<i_q} \{D^{i_1}_y\cap \dots \cap D^{i_q}_y\}$.
      
    \item[iii)]
      If $k$ is non-zero and even, one has
      \[
	1=\beta^{k}(X)-\sum_y\sum_{q\geq 1}(-1)^{q+1}\sum_{i_1<\dots<i_q} \beta^{k}(D^{i_1}_y\cap \dots \cap D^{i_q}_y)
      \]
      and, for $k=0$, 
      \[
	1=\beta^{0}(X)-\sum_y\sum_{q\geq 1}(-1)^{q+1}\sum_{i_1<\dots<i_q} \left(\beta^{0}(D^{i_1}_y\cap \dots \cap D^{i_q}_y)-1\right).
      \]
      
    \item[iv)] 
      $\beta^{odd}(X)=0$.
  \end{description}
% 
%   one has 
%   \[
%     \{\nicefrac{\mathbb{P}^{n-1}}{H}\}=\{X\}-\sum_y\left(\{D_y\}-\{y\}\right)
%   \]
%   where the sum runs over the toroidal singularities $y$ of $\nicefrac{\mathbb{P}^{n-1}}{H}$;
%   $\{D_y\}$ is the exceptional divisor of $y$ with irreducible components decomposition $D_y=D^1_y\cup \dots \cup D^r_y$; $\{D_y\}=\sum_{q\geq 0}(-1)^{q+1}\sum_{i_1,\dots,i_q} \{D^{i_1}_y\cap \dots \cap D^{i_q}_y\}$.
%   
%   Moreover if $k\neq 0$
%   \[
%     1=\beta^{k}(X,\inte)-\sum_y\sum_{q}(-1)^{q+1}\sum_{i_1,\dots,i_q} \beta^{k}(D^{i_1}_y\cap \dots \cap D^{i_q}_y, \inte)
%   \]
%   and
%   \[
%     1=\beta^{0}(X,\inte)-\sum_y\sum_{q}(-1)^{q+1}\sum_{i_1,\dots,i_q} \left(\beta^{0}(D^{i_1}_y\cap \dots \cap D^{i_q}_y, \inte)-1\right).
%   \]
%   
%   Finally, $\beta^{odd}(X,\inte)=0$.
\end{lem}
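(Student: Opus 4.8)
The plan is to prove the four items in the order \textbf{i)}--\textbf{iv)}, deriving each from scissor relations in $\grotcom{\Var{\field}}$ together with the cohomological map $\mathcal{H}^{\bullet}$, the toric geometry of the resolution entering only at the very end. For \textbf{i)} I would combine Lemma \ref{lem-utile}, which gives $\cl(1+\Lclass+\dots+\Lclass^{n-1})=\{[\nicefrac{\mathbb{P}^{n-1}}{G}]\}$, with Proposition \ref{prop-equality-classes-PV}, which identifies $\{[\nicefrac{\mathbb{P}^{n-1}}{G}]\}=\{[\nicefrac{\mathbb{P}^{n-1}}{H}]\}$. It then remains to pass from the stack quotient to the scheme quotient, i.e. to show $\{[\nicefrac{\mathbb{P}^{n-1}}{H}]\}=\{\nicefrac{\mathbb{P}^{n-1}}{H}\}$. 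This is exactly the comparison carried out for $\mathbb{P}^2$ in the proof of Theorem \ref{thm-GinGL3-trivial}, but now it is much simpler: since $H$ is abelian, every stabilizer $\Stab{y}{H}$ is a subgroup of an abelian group, hence abelian, so $\{\B{\Stab{y}{H}}\}=\{*\}$ by the remark that finite abelian groups have trivial classifying-stack class. Stratifying $\mathbb{P}^{n-1}$ into $H$-stable locally closed strata of constant stabilizer type, each stacky stratum therefore contributes the same class as the corresponding scheme stratum, and summing gives the equality. For the ``in particular'' clause I would apply $\mathcal{H}^{-k}$ to the resulting identity and use the shift rule $\mathcal{H}^{-k}(\cl\cdot\Lclass^{m})=\mathcal{H}^{-k-2m}(\cl)=\eke{k+2m}{G}$, which turns $\cl(1+\Lclass+\dots+\Lclass^{n-1})$ into $\eke{k}{G}+\eke{k+2}{G}+\dots+\eke{k+2(n-1)}{G}$ and yields \eqref{eq-recurrence-ek-quasi}.

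For \textbf{ii)} the toroidal assertion is immediate from Lemma \ref{lem-toric-sing-X-G}: as $H$ is abelian, each $\Stab{y}{H}$ is abelian, so $\nicefrac{\Stab{y}{H}}{\pseudo{\Stab{y}{H}}}$ is abelian and the germ is a simplicial toroidal singularity. For the class formula I would apply the scissor relation to $X=(X\setminus D)\sqcup D$, where $D=\bigcup_y D_y$ is the exceptional locus; the sum over $y$ is disjoint precisely because the singularities are zero dimensional. Since $\pi$ restricts to an isomorphism $X\setminus D\cong\nicefrac{\mathbb{P}^{n-1}}{H}\setminus\Singu{\nicefrac{\mathbb{P}^{n-1}}{H}}$, removing the finitely many singular points $y$ and rearranging yields \eqref{eq-P-n-1-H-singolarita-isolate-abeliano}. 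The displayed inclusion--exclusion expression for $\{D_y\}$ is then the standard scissor expansion of the normal crossing divisor $D_y=D^1_y\cup\dots\cup D^r_y$ into the classes of its closed toric strata $D^{i_1}_y\cap\dots\cap D^{i_q}_y$.

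For \textbf{iii)} and \textbf{iv)} I would push \eqref{eq-P-n-1-H-singolarita-isolate-abeliano} through the virtual Poincar\'e polynomial $p_{\bullet}(t)$, which is additive on scissor relations, and read off the coefficient $\beta^{k}$. Using the computation preceding the lemma that $p_{\nicefrac{\mathbb{P}^{n-1}}{H}}(t)=p_{\mathbb{P}^{n-1}}(t)=1+t^{2}+\dots+t^{2(n-1)}$, the left-hand side contributes $\beta^{k}(\nicefrac{\mathbb{P}^{n-1}}{H})$, which equals $1$ in the even degrees $0\le k\le 2(n-1)$ and $0$ in odd degrees. The point classes $\{y\}$ contribute only in degree $k=0$, which is exactly the origin of the extra $-1$ appearing inside the $k=0$ formula of \textbf{iii)}. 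The essential geometric input is that each stratum $D^{i_1}_y\cap\dots\cap D^{i_q}_y$ is a smooth complete toric variety, hence has even Poincar\'e polynomial (Section 5.2 of \cite{Fu}). For odd $k$ all these Betti numbers vanish and the left-hand side vanishes, forcing $\beta^{k}(X)=0$ and proving \textbf{iv)}; for even $k$ in the cohomological range the same substitution reproduces the two displayed identities of \textbf{iii)} directly.

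I expect the main obstacle to be the stack-versus-scheme comparison in \textbf{i)}. While the abelianness of $H$ trivialises every $\{\B{\Stab{y}{H}}\}$, one must still set up the stratification of $\mathbb{P}^{n-1}$ by stabilizer type carefully in arbitrary dimension (in contrast to the explicit $\dim\le 2$ bookkeeping of Theorem \ref{thm-GinGL3-trivial}) and verify that the stack class and the scheme class of every locally closed stratum coincide. Once that equality is in hand, and once the strata of the normal crossing toric resolution are identified as smooth complete toric varieties with even cohomology, the remaining steps \textbf{ii)}--\textbf{iv)} are formal.
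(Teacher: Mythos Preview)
Your proposal is correct and follows essentially the same approach as the paper's proof: the stack-to-scheme comparison via triviality of $\{\B{A}\}$ for abelian stabilizers $A$, the scissor relation expressing $\{\nicefrac{\mathbb{P}^{n-1}}{H}\}$ through the resolution and its normal crossing toric exceptional divisors, and the virtual Poincar\'e polynomial computation using $p_{\nicefrac{\mathbb{P}^{n-1}}{H}}=p_{\mathbb{P}^{n-1}}$ together with the evenness of the cohomology of smooth complete toric varieties. The paper also glosses over the stratification step you flag as the main obstacle, simply invoking the arguments of Proposition~\ref{pro-GinGL2-trivial} and Theorem~\ref{thm-GinGL3-trivial}.
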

\begin{proof}
  As in the proof of Proposition \ref{pro-GinGL2-trivial} and of Theorem \ref{thm-GinGL3-trivial}, we consider all the subvarieties of $\mathbb{P}^{n-1}$ globally fixed by certain subgroup $A$ of $H$. This subgroup is abelian and by Proposition 3.2 in \cite{Ekedahl-inv} $\{\B{A}\}=\{*\}$.
  Thus, $\{[\nicefrac{\mathbb{P}^{n-1}}{H}]\}=\{\nicefrac{\mathbb{P}^{n-1}}{H}\}$.
  Using in order, $\cl\left(1+\Lclass+\dots+\Lclass^{n}\right)=\{[\nicefrac{\mathbb{P}(V)}{G}]\}$ in $\grotcom{\Var{\field}}$, Proposition \ref{prop-equality-classes-PV} and the latter equality, we obtain the first part of \textbf{i)}.

%  Regarding item \textbf{i)} we consider, as we did in the proof of Proposition \ref{pro-GinGL2-trivial} and of Theorem \ref{thm-GinGL3-trivial}, all the subvarieties globally fixed by certain subgroups of $H$. These subgroups are abelian and by Proposition 3.2 in \cite{Ekedahl-inv} $\{\B{A}\}=\{*\}$ for every abelian group.
  %
%  Therefore $\{[\nicefrac{\mathbb{P}^{n-1}}{H}]\}=\{\nicefrac{\mathbb{P}^{n-1}}{H}\}$.
%  By assumptions $\nicefrac{\mathbb{P}^{n-1}}{H}$ has only zero dimensional singularities.
%  %
%  Regarding item \textbf{i)} we observe that, as we did in the proof of Proposition \ref{pro-GinGL2-trivial} and of Theorem \ref{thm-GinGL3-trivial},
%  \[
%    \{[\nicefrac{\mathbb{P}^{n-1}}{H}]\}=\{\nicefrac{\mathbb{P}^{n-1}}{H}\}+\sum_{j}(\{\B{\Stab{P_j}{H}}\}-\{*\})
%  \]
%  where the sum runs over the orbits of points with nontrivial stabilizer in $\mathbb{P}^{n-1}$ and $P_j$ is a point in such an orbit.
%  %
%  Every stabilizer group of $H$ is abelian and we know, by Proposition \ref{pro-stateofart}.\textbf{2)} that $\{\B{\Stab{P_j}{H}}\}=1$. 
%  %
%  So $\{[\nicefrac{\mathbb{P}^{n-1}}{H}]\}=\{\nicefrac{\mathbb{P}^{n-1}}{H}\}$. 
  %
%  Using $\cl\left(1+\Lclass+\dots+\Lclass^{n}\right)=\{[\nicefrac{\mathbb{P}(V)}{G}]\}$ in $\grotcom{\Var{\field}}$ and Proposition \ref{prop-equality-classes-PV}, we obtain the first part of \textbf{i)}.
  %
  For the second part of \textbf{i)}, we note that applying the cohomological map $\mathcal{H}^{-k}$ on the left hand side, one has:
  \begin{eqnarray*}
     \mathcal{H}^{-k}\left(\{\B{G}\}(1+\dots+\Lclass^{n-1})\right)
	&=&\mathcal{H}^{-k}\left(\{\B{G}\}\right)+\dots+\mathcal{H}^{-k}\left(\{\B{G}\}\Lclass^{n-1}\right)\\
	&=&\mathcal{H}^{-k}\left(\{\B{G}\}\right)+\dots+\mathcal{H}^{-k-2(n-1)}\left(\{\B{G}\}\right)\\
	&=&\eke{k}{G}+\dots +\eke{k+2(n-1)}{G}.
   \end{eqnarray*}
  
  Regarding item \textbf{ii)}: Every stabilizer group of $H$ is abelian and so it is for the quotient of $\Stab{x}{H}$ modulo $\pseudo{\Stab{x}{H}}$ in $T_xX$.
  Then, by Lemma \ref{lem-toric-sing-X-G}, each singularity of $\{\nicefrac{\mathbb{P}^{n-1}}{H}\}$ is an isolated simplicial toroidal singularities. 
  One produces a toric resolution $X$ with normal crossing toric exceptional divisors (see Section 2.6 of \cite{Fu}). 
  This means that each intersection $D^{i_1}_y\cap \dots \cap D^{i_q}_y$ is a smooth toric variety.
  The resolution $X\rightarrow  \nicefrac{\mathbb{P}^{n-1}}{H}$ restricted to the pull back of $\nicefrac{\mathbb{P}^{n-1}}{H}\setminus \Singu{\nicefrac{\mathbb{P}^{n-1}}{H}}$ is an isomorphism and then $\{X\}-\sum_y \{D_y\}=\{\nicefrac{\mathbb{P}^{n-1}}{H}\}-\sum_y \{y\}$. 
  
  %, one has equation (\ref{eq-P-n-1-H-singolarita-isolate-abeliano})
%   \[
%     \{\nicefrac{\mathbb{P}^{n-1}}{H}\}=\{X\}-\sum_y\left(\{D_y\}-\{y\}\right)
%   \]
  %and $\{D_y\}=\sum_{q\geq 1}(-1)^{q+1}\sum_{i_1,\dots,i_q} \{D^{i_1}_y\cap \dots \cap D^{i_q}_y\}$.
  
  Therefore, $p_{D_y}(t)=\sum_{q\geq 1}(-1)^{q+1}\sum_{i_1,\dots,i_q}p_{D^{i_1}_y\cap \dots \cap D^{i_q}_y}(t)$ and the odd degree coefficients of $p_{D_y}(t)$ are zero.

  Finally, we want to compute the virtual Poincar\'e polynomial of $X$. 
  Via formula (\ref{eq-P-n-1-H-singolarita-isolate-abeliano})
%   we have that 
%   \[
%     p_X(t)=p_{\nicefrac{\mathbb{P}^{n-1}}{H}}(t)+\sum_y(p_{D_y}(t)-1).
%   \]
  and using $p_{\nicefrac{\mathbb{P}^{n-1}}{H}}(t)=p_{\mathbb{P}^{n-1}}(t)$, 
  \[
    p_{\mathbb{P}^{n-1}}(t)=p_X(t)-\sum_y(p_{D_y}(t)-1).
  \]
  Comparing, degree by degree, the polynomial in the left hand side and in the right hand side, one gets the Betti numbers equalities and item \textbf{iv)}. 
\end{proof}

% \begin{teo}\label{thm-abelian-P-H-X}
%   %Let $\field=\com$.
%   %
%   Let $G$ be a finite subgroup of $\operatorname{GL}_n$. 
%   %
%   Let $\nicefrac{\mathbb{P}^{n-1}}{H}$ have only zero dimensional singularities.
%   %
%   Let $X$ be a smooth and proper resolution of $\nicefrac{\mathbb{P}^{n-1}}{H}$.
%   %
%   If $H$ is abelian, then 
%   \[
%     \eke{k}{G}+\eke{k+2}{G}+\dots+\eke{k+2(n-1)}{G}=\{\Hom{-k}{X}{\inte}\}.
%     %\Homred{k}{\{\B{G}\}(1+\Lclass+\dots+\Lclass^{n-1})}=\Homred{k}{\{\nicefrac{\mathbb{P}^{n-1}}{H}\}}.
%   \]
%   %Moreover $\{\Hom{2j}{X}{\inte}\}={\inte}$ for $0\leq j\leq n-1$.
% \end{teo}
\begin{proof}[Proof of Theorem \ref{thm-abelian-P-H-X}]
  From the first item of the previous lemma, we know that (\ref{eq-recurrence-ek-quasi}) holds.
%   \[
%     \sum_{i=0}^{n-1}\eke{k+2i}{G}=\Homred{-k}{\{\nicefrac{\mathbb{P}^{n-1}}{H}\}}.
%   \]
%   %$\sum_{i=0}^{n-1}\eke{k+2i}{G}=\Homred{-k}{\{\nicefrac{\mathbb{P}^{n-1}}{H}\}}$.
  %
  We shall show that $\mathcal{H}^{-k}\left(\{\nicefrac{\mathbb{P}^{n-1}}{H}\}\right)=\{\Hom{-k}{X}{\inte}\}$.
  Using the previous technical lemma we express $\{\nicefrac{\mathbb{P}^{n-1}}{H}\}$ in (\ref{eq-P-n-1-H-singolarita-isolate-abeliano}) as a sum of smooth and proper varieties 
%   \[
%     \{\nicefrac{\mathbb{P}^{n-1}}{H}\}=\{X\}-\sum_y\left(\{D_y\}-\{y\}\right)
%   \]
  and smooth toric varieties $\{D^{i_1}_y\cap \dots \cap D^{i_q}_y\}$.
  
  If $k>0$ or $k<-2(n-2)$, $\mathcal{H}^{-k}\left(\{D_y\}-\{y\}\right)=0$  for dimensional reason and so the recurrence holds.
  The same holds, if $k$ is odd integer between $0$ and $-2(n-2)$, because the cohomology of a smooth toric variety is torsion free by Section 5.2 in \cite{Fu}.
  
  \noindent
  It remains to consider the case $0\leq k=2j \leq -2(n-2)$.
  For these values, in the left hand side of (\ref{eq-recurrence-ek-quasi}), there are certain Ekedahl invariants with negative indices (so zero), $\eke{0}{G}$ and some positive even Ekedahl invariants $\eke{2}{G}+\dots+\eke{2j+2(n-1)}{G}$ that are an integer linear combination of classes of finite abelian groups (we use the second part of Theorem \ref{thm-ekedahl}).
  
  \noindent
  On the right hand side of (\ref{eq-recurrence-ek-quasi}) the only possible torsion part is $\{\operatorname{tor}\Hom{-k}{X}{\inte}\}$, because the cohomologies of a smooth toric variety is torsion free.
  Hence, what remains to prove is that the free parts cancel each others: this is equivalent to item \textbf{iii}) of the previous lemma.
%   
%   
%   for $k\neq 0$,
%   \[
%     \{\inte\}=\beta^{-k}(X)\{\inte\}-\sum_y\sum_{q\geq 1}(-1)^{q+1}\sum_{i_1,\dots,i_q} \beta^{-k}(D^{i_1}_y\cap \dots \cap D^{i_q}_y)\{\inte\}
%   \]
%   and
%   \[
%     \{\inte\}=\beta^{0}(X)\{\inte\}-\sum_y\sum_{q\geq 1}(-1)^{q+1}\sum_{i_1,\dots,i_q} \left(\beta^{0}(D^{i_1}_y\cap \dots \cap D^{i_q}_y)-1\right)\{\inte\}.
%   \]
%   These follow from the Lemma \ref{lem-technical-lemma-betti-numebers}.\textbf{iii)}.
\end{proof}

\section{The discrete Heisenberg group $H_p$}\label{sec:h-5}
Let $p$ be an odd.
The $p$-discrete Heisenberg group $\hei{p}$ is the following subgroup of $\Gl{3}{\pfield}$:
\[
	\hei{p}=\left\lbrace M(a,b,c)\stackrel{notation}{=}\left(\begin{array}{ccc}
						1 & a & b\\
						0 & 1 & c\\
						0 & 0 & 1
						\end{array}\right):\, a,b,c \in \pfield \right\rbrace.
\]
% If we denote by 
% \[
%   M(a,b,c)=\left(\begin{array}{ccc}
% 						1 & a & b\\
% 						0 & 1 & c\\
% 						0 & 0 & 1
% 						\end{array}\right),
% \]
We observe that $\hei{p}$ is generated by $\mathcal{X}=M(1,0,0)$, $\mathcal{Y}=M(0,0,1)$ and $\mathcal{Z}=M(0,1,0)$ modulo the relations  $\mathcal{Z}\mathcal{Y}\mathcal{X}=\mathcal{X}\mathcal{Y}$, $\mathcal{Z}^p=\mathcal{X}^p=\mathcal{Y}^p=M(0,0,0)$, $\mathcal{Z}\mathcal{X}=\mathcal{X}\mathcal{Z}$ and $\mathcal{Z}\mathcal{Y}=\mathcal{Y}\mathcal{Z}$.
The center of $\hei{p}$, $\centro$, is cyclic and generated by $\mathcal{Z}$. 
We denote by $A_p$ the group quotient $\nicefrac{\hei{p}}{\centro}\cong \Zp\times \Zp$. 
Moreover, $\hei{p}$ is the central extension of $\Zp$ by $\Zp\times \Zp$:
\begin{equation}\label{seq-esatta-Hp}
  1\rightarrow \Zp \rightarrow \hei{p} \stackrel{\phi}{\rightarrow} \Zp\times \Zp\rightarrow 1.
\end{equation}
Using Lemma 4.9 in \cite{Bogomolov1988}, one proves that the Bogomolov multiplier $B_0(\hei{p})$ is zero for every prime $p$.

% \begin{lem}[\cite{Bogomolov1988}, Lemma 4.9]
% 	Let $G$ be an extension of a cyclic group $H$ by an abelian group $Q$. Then $B_0(G)=0$.
% \end{lem}
% 
% \begin{cor}
% 	The Bogomolov multiplier $B_0(\hei{p})=0$, for every prime $p$.
% \end{cor}
%\cite{Tor_book, Fu-repr, Repr-heisemberg}

\noindent
We also remark that the discrete Heisenberg group has $p^2+p-1$ irreducible complex representations: $p^2$ of them are one dimensional and the remaining $p-1$ are faithful and $p$-dimensional.% (see Section 12 of \cite{Tor_book} or \cite{Repr-heisemberg}).

Let $V$ be a faithful irreducible $p$-dimensional complex representation of $\hei{p}$, $\hei{p}\stackrel{\representationname}{\rightarrow} \operatorname{GL}(V)$. 
% \begin{notation}
%   For simplicity we use $\mathbb{P}^{p-1}$ instead of $\mathbb{P}(V)$.
%   %
%   We also denote by $\operatorname{Id}_n$ the identity element of $\Gl{n}{\com}$.
% \end{notation}
There is a natural action of $H_p$ on $V$ and it induces an action on the complex $(p-1)$-dimensional space $\mathbb{P}^{p-1}$. 
One so defines the quotient $\nicefrac{\mathbb{P}^{p-1}}{\hei{p}}$.

\noindent
Since $\mathcal{Z}$ belongs to the center, $\representationname\left(\mathcal{Z}\right)=e^{\frac{2\pi i}{p}} \operatorname{Id}$, for some $0< i< p$, where $\operatorname{Id}$ is the identity element of $\operatorname{GL}(V)$.
Hence, the center acts trivially on $\mathbb{P}^{p-1}$ and $\nicefrac{\mathbb{P}^{p-1}}{\hei{p}}\cong\nicefrac{\mathbb{P}^{p-1}}{A_p}$. 
From Lemma \ref{lem-toric-sing-X-G}, we know that if $\nicefrac{\mathbb{P}^{p-1}}{A_p}$ has singularities, then they are toroidal.
We study these singularities and so we focus on $\Stab{x}{A_p}$. 

\begin{pro}\label{pro-nontrivial-stabilizer}
  Let $x\in \mathbb{P}^{p-1}$. 
  If the action of $A_p$ at $x$ is not free, then $|\Stab{x}{A_p}|=p$.
\end{pro}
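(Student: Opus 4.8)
The plan is to combine the elementary subgroup structure of $A_p\cong\Zp\times\Zp$ with the irreducibility of $V$. First I would record that $\Stab{x}{A_p}$ is a subgroup of $A_p$, so its order divides $|A_p|=p^2$ and is therefore $1$, $p$, or $p^2$; moreover, viewing $\Zp\times\Zp$ as a two-dimensional $\pfield$-vector space, every subgroup is an $\pfield$-subspace, so the only proper nontrivial ones have order exactly $p$. The hypothesis that the action is not free at $x$ excludes the trivial stabilizer, so the whole problem reduces to ruling out the extreme case $\Stab{x}{A_p}=A_p$, i.e. that $x$ is fixed by the entire group.

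So I would argue by contradiction: suppose every element of $A_p$ fixes $x=[v]$ for some $v\in V\setminus\{0\}$. The point here is that the $A_p$-action on $\mathbb{P}^{p-1}$ is the one induced from $\representationname$ upon passing to $A_p=\nicefrac{\hei{p}}{\centro}$, which is legitimate precisely because the center acts by scalars (as recalled in the excerpt, $\representationname(\mathcal{Z})$ is a scalar matrix). Concretely, for $\bar g\in A_p$ with a lift $g\in\hei{p}$ one has $\bar g\cdot[v]=[\representationname(g)v]$. If $\bar g\cdot[v]=[v]$ for all $\bar g\in A_p$, then $\representationname(g)v\in\com^{\times}v$ for every $g$ in a set of lifts of generators of $A_p$; together with the fact that $\representationname(\mathcal{Z})$ is already a scalar, and since $\hei{p}$ is generated by $\mathcal{X},\mathcal{Y},\mathcal{Z}$, the line $\com v$ is invariant under all of $\hei{p}$.

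This invariant line would be a one-dimensional $\hei{p}$-subrepresentation of $V$, contradicting the fact that $V$ is irreducible of dimension $p\geq 3$. Hence $\Stab{x}{A_p}\neq A_p$, and combined with the first paragraph this forces $|\Stab{x}{A_p}|=p$. I do not expect a genuine obstacle in this proof; the only step that requires care is the passage from a projective fixed point to an honest $\hei{p}$-invariant line, where one must use that $\centro$ acts by scalars so that the lifts of the generators of $A_p$, together with $\mathcal{Z}$, generate $\hei{p}$ and jointly preserve $\com v$.
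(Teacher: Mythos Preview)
Your argument is correct and follows essentially the same route as the paper: use Lagrange's theorem to reduce to excluding $\Stab{x}{A_p}=A_p$, then observe that a point fixed by all of $A_p$ gives a line $W_x\subset V$ stable under $\hei{p}$ (since the center already acts by scalars), contradicting the irreducibility of $V$. Your version is slightly more explicit about why the line is $\hei{p}$-invariant, but the idea is identical.
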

\begin{proof}
  Call $W_x$ the one dimensional subvector-space of $V$ corresponding to $x$. 
  By assumption, the stabilizer of $x$ is a nontrivial subgroup of $A_p$ and, by Lagrange's Theorem, it could have order $p$ or $p^2$. 
  If $\Stab{x}{A_p}=A_p$, then for every $g \in A_p, \, gW_{x}=W_{x}$ and $A_pW_{x}=W_{x}$. Then $\hei{p}W_{x}=W_{x}$.
  Therefore, $W_{x}$ is an one dimensional irreducible $\hei{p}$-subrepresentation of $V$ contradicting the fact that $\hei{p}$ acts irreducibly.
\end{proof}
There are exactly $p+1$ subgroups of order $p$ in $A_p$. 
Let $B$ be one of them.
We define $\barB$ to be a subgroup of $\hei{p}$ such that the surjection in (\ref{seq-esatta-Hp}) restricted to it, $\phi_{|\barB}$, is a group isomorphism:   $\barB\cong \Zp \subset \phi^{-1}(B)$. 
% \[
% \begin{array}{ccccc}
%   \hei{p} & \stackrel{\phi}{\rightarrow}  &A_p & \rightarrow &1\\
%   \cup &     & \cup &  & \\
%   \barB & \stackrel{\sim}{\rightarrow}  & B. & &\\
% \end{array}
% \]
% \begin{Obs}
%   $\barB\cong \Zp \subset \phi^{-1}(B)$. 
% \end{Obs}

% \begin{ex}
% Let $p\neq 2$ and let $B=\langle(a,b)\rangle\subset A_p$.
% %
% Let $M_{a,b}$ be the element of $\hei{p}$ defined as
% \[
%   M_{a,b}= \left( \begin{array}{ccc}
% 			1 & a & 0\\
% 			0 & 1 & b\\
% 			0 & 0 & 1
% 		\end{array}\right).
% \]
% where $M_{a,b}=\mathcal{X}^a\mathcal{Y}^b\mathcal{Z}^{-ab}$.
% %
% We also define
% \[
% 	N_{a,b}= \left( \begin{array}{ccc}
% 				0 & a & 0\\
% 				0 & 0 & b\\
% 				0 & 0 & 0
% 			\end{array}\right),
% \]
% and so $M_{a,b}=Id+N_{a,b}$. 
% %
% $M_{a,b}$ generates a cyclic subgroup of order $p$ because $M_{a,b}^p=(Id+N_{a,b})^p=Id+N_{a,b}^p=Id$. 
% %
% We also observe that 
% \[
% 	\begin{array}{ccccc}
% 	 	\phi: & \hei{p} &\rightarrow & A_p & \rightarrow 1\\
% 		      & \left( \begin{array}{ccc}
% 				1 & a & 0\\
% 				0 & 1 & b\\
% 				0 & 0 & 1
% 				\end{array}\right) & \mapsto  & (a,b) &
% 	\end{array}
% \]
% Thus $\barB=\langle M_{a,b} \rangle$.
% \end{ex}

\noindent
We restrict the representation $\hei{p}\repr \Gl{p}{\mathbb{C}}$ to the subgroup $\barB=\Zp$ and we write $V$ as a direct sum of one dimensional irreducible representations: $V=\oplus_{\chi\in \Zp} V_{\chi}$, where $V_{\chi}=\{v\in V: g \cdot v = \chi(g) \cdot v,\, \forall g\in \Zp\}$. 
In other words, $\barB$ fixes $p$ one dimensional linear subspaces $V_{\chi}$ and so $B$ fixes $p$ points $P_{\chi}\in\mathbb{P}^{p-1}$, with $\Stab{P_{\chi}}{A_p}=B$, that is $(\mathbb{P}^{p-1})^{B}=\{P_{\chi_0}, \dots, P_{\chi_{p-1}}\}$.
\begin{pro}\label{pro-distinct-point}
  If $B$ and $B'$ are two distinct $p$-subgroups of $A_p$, then $(\mathbb{P}^{p-1})^{B}\cap (\mathbb{P}^{p-1})^{B'}=\emptyset$.
\end{pro}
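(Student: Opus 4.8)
The plan is to argue by contradiction and reduce the claim to the stabilizer bound already established in Proposition~\ref{pro-nontrivial-stabilizer}. First I would suppose, for contradiction, that there exists a point $x\in (\mathbb{P}^{p-1})^{B}\cap (\mathbb{P}^{p-1})^{B'}$. By the definition of the fixed loci, every element of $B$ and every element of $B'$ fixes $x$, so both $B$ and $B'$ are contained in $\Stab{x}{A_p}$; consequently the subgroup $\langle B, B'\rangle$ that they generate inside $A_p$ is contained in $\Stab{x}{A_p}$ as well.

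The key group-theoretic observation I would invoke next is that $A_p\cong \Zp\times\Zp$ is a two-dimensional vector space over $\pfield$, and that $B$ and $B'$ are two of its $p+1$ one-dimensional subspaces. Since $B\neq B'$, they are distinct lines, so $B\cap B'=\{0\}$ and, by dimension count, $B+B'=A_p$; equivalently $\langle B, B'\rangle = A_p$. Therefore $\Stab{x}{A_p}=A_p$, a group of order $p^2$.

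This directly contradicts Proposition~\ref{pro-nontrivial-stabilizer}, which asserts that whenever the action of $A_p$ at a point is not free, the stabilizer has order exactly $p$: the stabilizer at $x$ is nontrivial, since it contains $B$, yet it has order $p^2$. Hence no such $x$ exists and the two fixed loci are disjoint. As an alternative that bypasses the reference, I could reprise the argument inside the proof of Proposition~\ref{pro-nontrivial-stabilizer} directly, noting that $\Stab{x}{A_p}=A_p$ forces the line $W_x\subset V$ to be $A_p$-stable, hence $\hei{p}$-stable, contradicting the irreducibility of $\representationname$. I do not expect any real obstacle here: the only substantive content is the linear-algebra fact that two distinct lines in the plane $A_p$ span it, so the statement is essentially a corollary of Proposition~\ref{pro-nontrivial-stabilizer}.
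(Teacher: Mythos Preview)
Your proposal is correct and follows essentially the same route as the paper: the paper's proof is a one-line version of exactly your argument, noting that $A_p=B\oplus B'$ and that a common fixed point would force $\Stab{P}{A_p}=A_p$, contradicting Proposition~\ref{pro-nontrivial-stabilizer}.
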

\begin{proof}
  Trivially, under the hypothesis, $A_p=B\oplus B'$ and if $P\in (\mathbb{P}^{p-1})^{B}\cap (\mathbb{P}^{p-1})^{B'}$, then $\Stab{P}{A_p}=A_p$, contradicting Proposition \ref{pro-nontrivial-stabilizer}.
\end{proof}

\noindent
%Let $(\mathbb{P}^{p-1})^{B}=\{x_1, \dots, x_p\}$. 
%
We observe that $\nicefrac{A_p}{B}$ acts regularly
% \footnote{%
%   A free action of $G$ on a topological space $X$ is regular if for every $x$ and $y$ in $X$ there exists precisely one element $g$ in $G$ such that $g\cdot x=y$.
% } 
on $(\mathbb{P}^{p-1})^{B}$. %: indeed, if $aB\in \nicefrac{A_p}{B}$, then $(aB)P_{\chi}=a (B P_{\chi})=a P_{\chi}$.
Thus, these points lie in the same orbit under the action of $\nicefrac{A_p}{B}$ and this means that they correspond to a unique point $y_B$ in $\var{p}$.
\begin{teo}\label{theo-singular-point-var-p}
  The quotient $\var{p}$ has $p+1$ simplicial toroidal singular points.
\end{teo}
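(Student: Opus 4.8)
The plan is to count the singular points of $\var{p} \cong \nicefrac{\mathbb{P}^{p-1}}{A_p}$ by using the structure we have just established about the fixed loci of the order-$p$ subgroups of $A_p \cong \Zp \times \Zp$. By Proposition \ref{pro-nontrivial-stabilizer}, every non-free point $x$ has stabilizer of order exactly $p$, so $\Stab{x}{A_p} = B$ for one of the $p+1$ subgroups $B$ of order $p$. Thus the set of points in $\mathbb{P}^{p-1}$ with nontrivial stabilizer is exactly $\bigcup_B (\mathbb{P}^{p-1})^{B}$, and by Proposition \ref{pro-distinct-point} this union is disjoint over the $p+1$ choices of $B$. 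This already organizes the count: the singular locus downstairs is the image of this disjoint union under the quotient map.

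First I would recall, from the discussion immediately preceding the statement, that for each fixed $B$ the quotient group $\nicefrac{A_p}{B}$ acts regularly on the $p$-element set $(\mathbb{P}^{p-1})^{B} = \{P_{\chi_0}, \dots, P_{\chi_{p-1}}\}$, so all $p$ of these points form a single $A_p$-orbit and collapse to one point $y_B \in \var{p}$. Hence each subgroup $B$ contributes exactly one point $y_B$ to the image of the non-free locus. Next I would verify that distinct subgroups $B \neq B'$ give distinct image points $y_B \neq y_{B'}$: this follows because $(\mathbb{P}^{p-1})^{B}$ and $(\mathbb{P}^{p-1})^{B'}$ are disjoint $A_p$-stable sets (their preimages are disjoint by Proposition \ref{pro-distinct-point}, and being $A_p$-invariant they descend to disjoint subsets of the quotient). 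Therefore the non-free locus maps onto exactly $p+1$ distinct points $y_{B_0}, \dots, y_{B_p}$ in $\var{p}$.

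It then remains to argue that each $y_B$ is genuinely a singular point of $\var{p}$, rather than merely a point with nontrivial stabilizer. For this I would invoke the criterion recorded in Section \ref{sec:GL23}: a point $\bar{y}$ is singular precisely when $\pseudo{\Stab{y}{A_p}} \neq \Stab{y}{A_p}$. At each $P_{\chi}$ the stabilizer is $B \cong \Zp$ acting on the tangent space, and since the representation $V$ is faithful and irreducible of dimension $p$, the generator of $B$ acts on $T_{P_\chi}\mathbb{P}^{p-1}$ with eigenvalues coming from distinct characters of $\Zp$, so it is not a pseudo-reflection; hence $\pseudo{B}$ is trivial while $B$ is not, and $y_B$ is singular. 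Combined with the fact that every non-free point lies over some $y_B$, this shows $\var{p}$ has exactly $p+1$ singular points, and each is simplicial toroidal by Lemma \ref{lem-toric-sing-X-G} since $A_p$ (hence every stabilizer) is abelian.

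The main obstacle I anticipate is the verification that each $y_B$ really is singular and not smooth, i.e. ruling out that $B$ acts as a pseudo-reflection group on the relevant tangent space. This requires a genuine look at how the generator of $\barB \cong \Zp$ decomposes $V$ into the $p$ distinct one-dimensional weight spaces $V_\chi$, and at which weights appear in the tangent space $T_{P_\chi}\mathbb{P}^{p-1}$; the faithfulness and irreducibility of $\representationname$ are what guarantee enough distinct nontrivial eigenvalues that the action cannot fix a hyperplane pointwise. Everything else — the disjointness, the orbit count, and the toroidal conclusion — follows formally from the two preceding propositions and Lemma \ref{lem-toric-sing-X-G}.
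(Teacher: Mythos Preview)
Your proposal is correct and follows essentially the same route as the paper: count the $p+1$ order-$p$ subgroups $B$, use the regular action of $\nicefrac{A_p}{B}$ and Proposition~\ref{pro-distinct-point} to get $p+1$ distinct image points $y_B$, then rule out pseudo-reflections in $\Stab{y}{A_p}\cong\Zp$ and apply Lemma~\ref{lem-toric-sing-X-G}. The only difference is cosmetic: the paper disposes of the pseudo-reflection case by asserting that $\Stab{y}{A_p}$ fixes only the origin of $T_y\mathbb{P}^{p-1}$, whereas you justify this more explicitly via the weight decomposition $V=\oplus_\chi V_\chi$ (which the paper spells out only later, in Theorem~\ref{theo-toroidal-singularities-articolo}).
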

\begin{proof}
  There are exactly $p+1$ subgroups, $B$, of order $p$ in $A_p$. 
  Each of them corresponds to a point $y_B$ in $\var{p}$.
  By Proposition \ref{pro-distinct-point}, these points are distinct.
  
  \noindent
  Let $y\in \mathbb{P}^{p-1}$ such that $\bar{y}=y_B$.
  We consider the action of $\Stab{y}{A_p}$ on the tangent space $T_{y}\mathbb{P}^{p-1}$. 
  The pseudo-reflection group $\pseudo{\Stab{y}{A_p}}=\{e\}$, because it is a subgroup of $\Stab{y}{A_p}\cong \Zp$ and, so, it is either the trivial group or $\Stab{y}{A_p}$. 
  The latter is not possible because $\Stab{y}{A_p}$ stabilizes only the origin of the vector space $T_{y}\mathbb{P}^{p-1}$.
  Thus, $\pseudo{\Stab{y}{A_p}}\neq \Stab{y}{A_p}$ in $T_{y}\mathbb{P}^{p-1}$ and by Lemma \ref{lem-toric-sing-X-G} these singularities are also toroidal and simplicial.
\end{proof}

We now outline a method to calculate the Ekedahl invariants for $\hei{p}$:
we write $\{\var{p}\}$ as a sum of classes of smooth and proper varieties and we use Theorem \ref{thm-abelian-P-H-X}.

\noindent
Let $\varns{p} \res \var{p}$ be the resolution of the $p+1$ toroidal singularities of $\var{p}$.
One has the following geometrical picture:
\begin{diagram}
		& 					& \mathbb{P}(V)		&\lInto	& U\\
		& 					& \dTo_{\quotientname}	&	&\dTo(0,4)^{\quotientname_{|_U}}\\
\varns{p} 	& \rTo^{\resolutionname} 		& \var{p}		&	& \\
\uInto		&  					& 			&\luInto & \\
\open{p}	& \rTo(4,0)_{\resolutionname_{|_{U_p}}}^{\sim} 	& 			&	& \open{p}
\end{diagram}
where $U$ is the open subset of $\mathbb{P}(V)$ where $A_p$ acts freely; %$\Sp= \mathbb{P}(V)\setminus U$; 
$\open{p}=\nicefrac{U}{A_p}$.% and $\varns{p}$ is a smooth and proper resolution of $\var{p}$. 
%$E$ is the union of exceptional divisors of the resolution $\varns{p} \res \var{p}$.

Since $A_p$ is abelian, using Theorem \ref{thm-abelian-P-H-X} one gets
\[
  \eke{k}{G}+\eke{k+2}{G}+\dots+\eke{k+2(p-1)}{G}=\{\Hom{-k}{\varns{p}}{\inte}\}.
\]
Because of Theorem \ref{thm-ekedahl}, $\eke{0}{\hei{p}}=\{\inte\}$ and $\eke{1}{\hei{p}}=\eke{2}{\hei{p}}=0$.
Thus, we focus on $\eke{3}{\hei{p}}$.
We are going to show that $\eke{3}{\hei{5}}=\eke{4}{\hei{5}}=0$.
% \[
% 	\eke{3}{\hei{5}}=\eke{4}{\hei{5}}=\{\operatorname{tor}(\Hom{5}{X_5}{\inte})\}=0.
% \]
We set $p=5$ because of the difficulty in computing $\operatorname{tor}(\Hom{*}{\varns{p}}{\inte})$ for $p>5$.
\begin{claim} 
$\operatorname{tor}(\Hom{5}{\varns{5}}{\inte}) = 0$.
\end{claim}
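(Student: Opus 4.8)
The plan is to compute $\operatorname{tor}(\Homred{5}{\varns{5}})$ through the Leray spectral sequence of the resolution $\resolutionname\colon \varns{5}\to \mathbb{P}^4/A_5$, with integral coefficients, exactly in the spirit of the argument in the proof of Theorem \ref{thm-GinGL3-trivial}. Since $\resolutionname$ is proper and is an isomorphism over the complement of the $p+1=6$ singular points $y$ of Theorem \ref{theo-singular-point-var-p}, the sheaves $R^j\resolutionname_*\inte$ with $j\ge 1$ are skyscrapers supported at those points, with stalk $\Homred{j}{D_y}$ where $D_y=\resolutionname^{-1}(y)$; hence $E_2^{i,j}=0$ whenever $i\ge 1$ and $j\ge 1$. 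This ``L--shaped'' spectral sequence degenerates into a long exact sequence, from which I would extract the short exact sequence
\[
0 \to \operatorname{coker}(d_5) \to \Homred{5}{\varns{5}} \to \ker(d_6) \to 0,
\]
where $d_5\colon \bigoplus_y \Homred{4}{D_y}\to \Homred{5}{\mathbb{P}^4/A_5}$ and $d_6\colon \bigoplus_y \Homred{5}{D_y}\to \Homred{6}{\mathbb{P}^4/A_5}$ are the transgressions. Because $|A_5|=25$, every torsion group occurring is $5$--primary, and this is what makes $p=5$ tractable.

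First I would record that $\Homred{5}{\mathbb{P}^4/A_5}\otimes\rat=(\Homred{5}{\mathbb{P}^4}\otimes\rat)^{A_5}=0$, so that $\Homred{5}{\mathbb{P}^4/A_5}$ is pure torsion and therefore $\operatorname{coker}(d_5)$ is pure torsion as well; thus $\operatorname{coker}(d_5)=0$ if and only if $d_5$ is surjective. It then remains to prove \textbf{(I)} that $\bigoplus_y \Homred{5}{D_y}$ is torsion-free, so that the subgroup $\ker(d_6)$ is torsion-free, and \textbf{(II)} that $d_5$ is onto. Granting both, the displayed sequence gives $\Homred{5}{\varns{5}}\cong \ker(d_6)$, which is torsion-free, proving the claim; together with $\beta^5(\varns{5})=0$ from Lemma \ref{lem-technical-lemma-betti-numebers}(iv) and the second part of Theorem \ref{thm-ekedahl}, this also yields $\eke{3}{\hei{5}}=\{\Homred{5}{\varns{5}}\}=0$.

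For \textbf{(I)}: by Proposition \ref{pro-nontrivial-stabilizer} and the computation of the action of $\Stab{y}{A_5}\cong\inte/5$ on $T_y\mathbb{P}^4$, each germ $(\mathbb{P}^4/A_5,y)$ is the isolated cyclic quotient singularity $\com^4/(\inte/5)$ of type $\tfrac15(1,2,3,4)$ with all weights nonzero, so all six points are analytically isomorphic and $D_y$ is the exceptional fibre of a toric resolution, i.e.\ a simple normal crossing union $D_y=D_y^1\cup\dots\cup D_y^r$ of smooth projective toric $3$-folds. I would then run the Mayer--Vietoris spectral sequence of this union: its $E_1$-terms are the cohomologies of the toric strata $D_y^{i_1}\cap\dots\cap D_y^{i_q}$, which are torsion-free and concentrated in even degree (Section 5.2 of \cite{Fu}). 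Since $D_y$ has complex dimension $3$ its nerve has dimension at most $3$, which kills $E_2^{5,0}$, and an explicit bookkeeping of the two remaining terms contributing to degree $5$ shows that $\Homred{5}{D_y}$ is torsion-free. This is a finite, if tedious, toric computation.

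The main obstacle is \textbf{(II)}, the surjectivity of $d_5$: concretely, showing that the entire $5$-torsion of $\Homred{5}{\mathbb{P}^4/A_5}$ is resolved by the exceptional divisors. To control $\Homred{5}{\mathbb{P}^4/A_5}$ I would use the long exact sequence of the pair $(\mathbb{P}^4/A_5,V_0)$, where $V_0=\mathbb{P}^4/A_5\setminus\Singu{\mathbb{P}^4/A_5}$ is the free quotient $(\mathbb{P}^4\setminus F)/A_5$ with $F$ the finite set of non-free points: the local contributions are the cohomologies of the links $L_y\cong S^7/(\inte/5)$, which are lens spaces with $\Homred{5}{L_y}=0$, while $\Homred{\ast}{V_0}=\operatorname{H}^{\ast}_{A_5}(\mathbb{P}^4\setminus F)$ is computed by the Lyndon--Hochschild--Serre spectral sequence for the free $(\inte/5)^2$-action (note $\operatorname{H}^{3}((\inte/5)^2;\inte)$ and $\operatorname{H}^{5}((\inte/5)^2;\inte)$ are nonzero, so this torsion is genuinely present). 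Matching the resulting $5$-torsion against the image of the transgression $d_5$ is the delicate step; it amounts to verifying the vanishing of the relevant degree-$3$ unramified-type class, and it is precisely here that the explicit small resolution of $\tfrac15(1,2,3,4)$ is indispensable. For $p>5$ the combinatorics of resolving $\tfrac1p(\,\cdot\,)$ and the attendant torsion bookkeeping become intractable, which is exactly why the investigation is narrowed to $p=5$.
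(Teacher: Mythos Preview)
Your Leray--spectral--sequence setup is correct and the short exact sequence you extract for $\Homred{5}{\varns{5}}$ is valid, but what you have written is a plan, not a proof: neither \textbf{(I)} nor \textbf{(II)} is actually carried out. For \textbf{(I)}, the only contribution to $\Homred{5}{D_y}$ in your Mayer--Vietoris sequence is $\operatorname{coker}\bigl(\oplus_i \Homred{4}{D_y^i}\to \oplus_{i<j}\Homred{4}{D_y^{i,j}}\bigr)$, a cokernel of a map between free groups which need not be torsion-free without an explicit matrix check. For \textbf{(II)} you yourself flag the surjectivity of $d_5$ as ``the main obstacle'' and ``the delicate step'' and then stop; the appeal to an ``unramified-type class'' is not made precise. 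Since the entire torsion statement hinges on \textbf{(II)}, this is a genuine gap.

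The paper takes a different and more direct route that avoids the integral cohomology of the singular base $\mathbb{P}^4/A_5$ altogether. It uses the long exact sequence of the pair $(\varns{5},E)$,
\[
\cdots \to \Hrel{5}{\varns{5}}{E}{\inte} \to \Homred{5}{\varns{5}} \to \Homred{5}{\open{5}} \to \cdots,
\]
where $\open{5}=\varns{5}\setminus E$ is the smooth free quotient $(\mathbb{P}^4\setminus S_5)/A_5$. The two needed inputs are then computed explicitly: $\Homred{\mathrm{odd}}{\open{5}}=0$ in degrees $<8$ via the Cartan--Leray spectral sequence for the free $A_5$-action (Theorem~\ref{teo-cohomology-open-set-articolo}), and $\Hrel{5}{\varns{5}}{E}{\inte}=0$ via the spectral sequence $E_1^{-k,i}=\bigoplus \Hrel{i}{\varns{5}}{D_{i_1,\dots,i_k}}{\inte}$, which reduces to checking that one explicit integer matrix built from the toric fan $\Delta_5$ has trivial kernel (Theorem~\ref{thm-rational-cohomology-exceptional-divisor-articolo}). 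Your route would need both of these anyway---$\Homred{5}{\open{5}}$ enters when you compute $\Homred{5}{\mathbb{P}^4/A_5}$ via the pair $(\mathbb{P}^4/A_5,\open{5})$, and the transgression $d_5$ is governed by the same toric intersection data---so the Leray approach is a detour rather than a shortcut, and the missing verification of \textbf{(II)} is precisely the content of the paper's explicit matrix computation.
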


\noindent
Using this claim, we prove the main result.
\begin{teo}\label{thm-ekedahl-invariants-H-p}
  The Ekedahl invariants of the fifth discrete Heisenberg group $\hei{5}$ are trivial.
\end{teo}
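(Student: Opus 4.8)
The plan is to run the recurrence of Theorem \ref{thm-abelian-P-H-X} for $n=p=5$ against the low-degree data of Theorem \ref{thm-ekedahl}, and then extract everything from the integral cohomology of the resolution; write $X=\varns{5}$. Since $A_5$ is abelian and, by Theorem \ref{theo-singular-point-var-p}, $\var{5}$ has only the $p+1=6$ isolated (toroidal) singular points, Theorem \ref{thm-abelian-P-H-X} applies to $\hei{5}$ and for every integer $k$
\begin{equation*}
\eke{k}{\hei{5}}+\eke{k+2}{\hei{5}}+\eke{k+4}{\hei{5}}+\eke{k+6}{\hei{5}}+\eke{k+8}{\hei{5}}=\{\Hom{-k}{X}{\inte}\}.
\end{equation*}
From Theorem \ref{thm-ekedahl} we know $\eke{i}{\hei{5}}=0$ for $i<0$, $\eke{0}{\hei{5}}=\{\inte\}$, $\eke{1}{\hei{5}}=0$, and $\eke{2}{\hei{5}}=\{B_0(\hei{5})^{\vee}\}=0$ (because $B_0(\hei{5})=0$); moreover every $\eke{i}{\hei{5}}$ with $i>0$ is an integer combination of classes of finite abelian groups, hence has no $\{\inte\}$-summand. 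I would therefore compare the two sides coordinatewise in $\Lo{\Ab}$ and project onto the torsion part: the torsion part of the left-hand side is $\sum_{k+2j>0}\eke{k+2j}{\hei{5}}$, and that of the right-hand side is $\{\operatorname{tor}\Hom{-k}{X}{\inte}\}$. This sidesteps any bookkeeping with the free summands entirely.

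The next step uses that $X$ is a smooth and proper complex fourfold, hence a closed oriented real $8$-manifold. By the universal coefficient theorem together with Poincar\'e duality one gets $\operatorname{tor}\Hom{n}{X}{\inte}\cong\operatorname{tor}\Hom{9-n}{X}{\inte}$ for all $n$. I would first harvest the torsion that the recurrence already forces to vanish. At $k=-7$ the only positive left-hand index is $1$ and $\eke{1}{\hei{5}}=0$, so $\operatorname{tor}\Hom{7}{X}{\inte}=0$, and dually $\operatorname{tor}\Hom{2}{X}{\inte}=0$. At $k=-6$ the only positive index is $2$ and $\eke{2}{\hei{5}}=0$, so $\operatorname{tor}\Hom{6}{X}{\inte}=0$, and dually $\operatorname{tor}\Hom{3}{X}{\inte}=0$. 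Together with the automatic vanishings in degrees $0,1,8$, the only torsion not yet accounted for sits in the middle degrees $4$ and $5$, and these are tied together by duality: $\operatorname{tor}\Hom{4}{X}{\inte}\cong\operatorname{tor}\Hom{5}{X}{\inte}$.

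This is exactly where the Claim is needed. Reading the recurrence at $k=-5$ gives $\eke{3}{\hei{5}}=\{\operatorname{tor}\Hom{5}{X}{\inte}\}$, and at $k=-4$ gives $\eke{4}{\hei{5}}=\{\operatorname{tor}\Hom{4}{X}{\inte}\}=\{\operatorname{tor}\Hom{5}{X}{\inte}\}$; thus both the third and fourth invariants equal the class of $\operatorname{tor}\Hom{5}{X}{\inte}$, which vanishes by the Claim, so $\eke{3}{\hei{5}}=\eke{4}{\hei{5}}=0$. At this point every torsion subgroup of $\Hom{*}{X}{\inte}$ is trivial, so the torsion part of the recurrence reads $\sum_{k+2j>0}\eke{k+2j}{\hei{5}}=0$ for all $k$. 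Running through $k=-3,-2,-1,0$ and using the already-established vanishings yields in turn $\eke{5}{\hei{5}}=\eke{6}{\hei{5}}=\eke{7}{\hei{5}}=\eke{8}{\hei{5}}=0$, while for $k\geq 1$ the right-hand side is already zero and an upward induction gives $\eke{i}{\hei{5}}=0$ for all $i\geq 9$. Hence $\eke{i}{\hei{5}}=0$ for every $i\neq 0$, which is the assertion.

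The genuine obstacle is the Claim $\operatorname{tor}\Hom{5}{\varns{5}}{\inte}=0$: everything else is formal manipulation of the recurrence and Poincar\'e duality, and the two a priori unknown invariants $\eke{3}{\hei{5}}$ and $\eke{4}{\hei{5}}$ are precisely the ones controlled by the middle-degree torsion of the resolution. Proving the Claim demands an explicit toric resolution of the cyclic quotient singularities of type $\nicefrac{1}{5}(1,2,3,4)$ occurring at the six points, together with control of the integral cohomology of the resulting normal-crossing exceptional divisors; it is the difficulty of this torsion computation for general $p$, rather than any conceptual gap, that forces the restriction to $p=5$.
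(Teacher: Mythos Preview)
Your argument is correct and follows essentially the same route as the paper: both feed the recurrence of Theorem \ref{thm-abelian-P-H-X} with the low-degree data from Theorem \ref{thm-ekedahl}, project to torsion, use Poincar\'e duality on the $8$-manifold $\varns{5}$ to pair up middle degrees, and invoke the Claim for $\operatorname{tor}\Hom{5}{\varns{5}}{\inte}$. The only cosmetic difference is the order of exposition: you first clear $\operatorname{tor}\Hom{2}{X}{\inte},\operatorname{tor}\Hom{3}{X}{\inte},\operatorname{tor}\Hom{6}{X}{\inte},\operatorname{tor}\Hom{7}{X}{\inte}$ via the recurrence at $k=-7,-6$ and duality, and then run $k=-3,\dots,0$ explicitly, whereas the paper packages the same steps as the identities $\eke{5}{\hei{5}}=\{\operatorname{tor}\Hom{3}{X}{\inte}\}=\eke{2}{\hei{5}}$ and $\eke{6}{\hei{5}}=\{\operatorname{tor}\Hom{2}{X}{\inte}\}=\eke{1}{\hei{5}}$ and then appeals to a ``dimensional reason'' for $i>6$; your version simply unwinds that phrase.
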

\begin{proof}
  By using Theorem \ref{thm-abelian-P-H-X} for $G=\hei{5}$, $n=5$, $k=-2\cdot 5+5$ and $X=\varns{5}$ and also by applying the second part of Theorem \ref{thm-ekedahl}, we have $\eke{3}{\hei{5}}%=\{\operatorname{tor}\left(\Hom{2\cdot 5-5}{\varns{5}}{\inte}\right)\}
  =\{\operatorname{tor}\left(\Hom{5}{\varns{5}}{\inte}\right)\}$.
  By Poincar\'e duality,
  $$\eke{3}{\hei{5}}=\{\operatorname{tor}\left(\Hom{5}{\varns{5}}{\inte}\right)\}
  =\{\operatorname{tor}\left(\Hom{4}{\varns{5}}{\inte}\right)\}$$ 
  and this is zero by the claim. 
  Similarly, for $k=-2\cdot 5+6$,
  \[
      \eke{4}{\hei{5}}=\{\operatorname{tor}\left(\Hom{2\cdot 5-6}{\varns{5}}{\inte}\right)\}
  =\{\operatorname{tor}\left(\Hom{4}{\varns{5}}{\inte}\right)\}.
  \]
  
  \noindent
  %Moreover, $\eke{3}{\hei{5}}$ and $\eke{4}{\hei{5}}$ are the only invariants that could not be zero. 	
  Moreover, $\eke{i}{\hei{5}}=0$ for $i\geq 5$.
  Indeed, $\eke{5}{\hei{5}}=\{\operatorname{tor}\left(\Hom{3}{\varns{5}}{\inte}\right)\}=\eke{2}{\hei{5}}=0$ and $\eke{6}{\hei{5}}=\{\operatorname{tor}\left(\Hom{2}{\varns{5}}{\inte}\right)\}=\eke{1}{\hei{5}}=0$.
  In addition, $\eke{i}{\hei{5}}=0$ for $i>6$ for dimensional reason.
%generico per ogni p
%   Using Theorem \ref{thm-abelian-P-H-X} for $G=\hei{p}$, $n=p$, $X=\varns{p}$ and $k=-2p+5$ and also applying Theorem \ref{thm-ekedahl}.\textbf{e)}, we have 
%   \[
%       \eke{3}{G}=\{\operatorname{tor}\left(\Hom{2p-5}{\varns{p}}{\inte}\right)\}
%   =\{\operatorname{tor}\left(\Hom{4}{\varns{p}}{\inte}\right)\}.
%   \]
%   Let $p=5$, then $\eke{3}{G}=\{\operatorname{tor}\left(\Hom{4}{\varns{5}}{\inte}\right)\}=\{\operatorname{tor}\left(\Hom{5}{\varns{5}}{\inte}\right)\}$ and this is zero by the claim. 
%   %
%   Similarly, for $k=-2p+6$
%   \[
%       \eke{4}{G}=\{\operatorname{tor}\left(\Hom{2p-6}{\varns{p}}{\inte}\right)\}
%   =\{\operatorname{tor}\left(\Hom{4}{\varns{5}}{\inte}\right)\}.
%   \]
%  
%   If we set $p=5$ then $\eke{3}{G}$ and $\eke{4}{G}$ are the only invariants that could not be zero. 
%   %
%   Indeed,
%   \begin{eqnarray*}
%     \eke{5}{G}&=&\{\operatorname{tor}\left(\Hom{3}{\varns{5}}{\inte}\right)\}=\eke{2}{G}=0\\
%     \eke{6}{G}&=&\{\operatorname{tor}\left(\Hom{2}{\varns{5}}{\inte}\right)\}=\eke{1}{G}=0
%   \end{eqnarray*}
%   and $\eke{i}{G}=0$ for $i>6$ for dimensional reason.
\end{proof}

\noindent
We observe that the same proof would follow for $\hei{p}$ for $p>5$ if we had enough information about the vanishing of the torsion in the cohomology of $X_p$.
This fact and the recent proof (by Kang in \cite{Kang-Rationality}) of the rationality of the extension $\com(x_g, g\in \hei{p})^{\hei{p}}/\com$ suggest the following conjecture:

\begin{conj}
  The Ekedahl invariants of the Heisenberg group $H_p$ of order $p^3$ are trivial.
\end{conj}

\subsection{Proof of the claim}
To obtain the claim it sufficies to show that $\Hom{odd}{\open{5}}{\inte}=0$ and $\Hrel{5}{\varns{5}}{E}{\inte}=0$, where $E$ is the union of exceptional divisors of the resolution $\varns{5} \res \var{5}$.
\begin{teo}\label{teo-cohomology-open-set-articolo}
  The cohomology of the smooth open subset $\open{p}$ of $\nicefrac{\mathbb{P}(V)}{A_p}$ for $k<2p-2$ is 
  \[
    \Hom{k}{\open{p}}{\inte}=\begin{cases}
				\inte 			&\text{if $k=0$};\\
				0 			&\text{if $k$ is odd};\\
				\inte\oplus (\Zp)^{\frac{k}{2}+1}	&\text{if $k\neq 0$ and even}.
	                      \end{cases}                      
  \]
\end{teo}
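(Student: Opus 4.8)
The plan is to compute $\Hom{*}{\open{p}}{\inte}$ via the spectral sequence of the free quotient $\open{p}=\nicefrac{U}{A_p}$, since $A_p$ acts freely on $U$ by construction and hence $H^{*}_{A_p}(U;R)\cong\Hom{*}{\open{p}}{R}$ for any coefficients $R$. First I would pin down the two inputs. Writing $F\subset\mathbb{P}(V)$ for the finite set of non‑free points, Proposition~\ref{pro-nontrivial-stabilizer} and Theorem~\ref{theo-singular-point-var-p} show $|F|=p(p+1)$ and describe $F$ as an $A_p$‑set: it is the disjoint union over the $p+1$ order‑$p$ subgroups $B$ of the free $\nicefrac{A_p}{B}$‑orbits $(\mathbb{P}(V))^{B}$, so $\inte[F]$ is a permutation module. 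From the long exact sequence of the pair $(\mathbb{P}(V),U)$ and the Thom isomorphism at each removed point, $\Hom{t}{U}{\inte}$ coincides with $\Hom{t}{\mathbb{P}(V)}{\inte}$ in the range $t\le 2p-4$, i.e. it is $\inte$ for $t$ even and $0$ for $t$ odd, with $A_p$ acting trivially (the action on $\Hom{*}{\mathbb{P}(V)}{\inte}$ is trivial, as observed before Theorem~\ref{thm-abelian-P-H-X}). The removed points only enter below $2p-2$ through the top odd degree $2p-3$, via the invariants of $\inte[F]$; I would handle that boundary term on its own and stress that the clean pattern lives in degrees $\le 2p-4$, which is all the application to $\hei{5}$ needs.

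I would then run the Serre/Cartan--Leray spectral sequence of the fibration $U\to\open{p}\to\operatorname{B}A_p$, namely $E_2^{s,t}=\Hom{s}{A_p}{\Hom{t}{U}{R}}\Rightarrow\Hom{s+t}{\open{p}}{R}$, and split by coefficients. Rationally $\Hom{*}{\open{p}}{\rat}=\Hom{*}{U}{\rat}^{A_p}$ is $\rat$ in each even degree and $0$ in each odd degree in the clean range, which fixes the free part as $\inte$ in even degrees and shows all torsion is $p$‑torsion. For $R=\Zp$ the $E_2$‑page is $\Hom{*}{A_p}{\Zp}\otimes\Hom{*}{U}{\Zp}$ with $\Hom{*}{A_p}{\Zp}=\Lambda(x_1,x_2)\otimes\Zp[y_1,y_2]$ and $y_i=\beta x_i$, the fiber contributing a truncated polynomial algebra on the hyperplane class $h$ in degree $2$. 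The decisive input is the transgression of $h$: because the center of $\hei{p}$ acts on $V$, hence on $\mathcal{O}(-1)$, by a nontrivial scalar, the bundle $\mathcal{O}(1)$ is not $A_p$‑linearizable, and the obstruction is exactly the class of the central extension~(\ref{seq-esatta-Hp}), namely $c=x_1x_2\in\Hom{2}{A_p}{\Zp}$. I would establish $d_3(h)=\beta(c)=x_2y_1-x_1y_2=:\omega$, so that $d_3(h^{i}b)=i\,h^{i-1}\omega b$ by Leibniz, a Koszul‑type differential given by multiplication by $\omega$.

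Computing $E_4$ as the cohomology of the resulting complex, one tower of copies of $\Hom{*}{A_p}{\Zp}$ linked by multiplication by the nilpotent element $\omega$ (with $\omega^{2}=-2x_1x_2y_1y_2\neq0$, $\omega^{3}=0$), and checking $E_4=E_\infty$ in the range for degree reasons, yields $\dim_{\Zp}\Hom{k}{\open{p}}{\Zp}=1,2,3,3,4,4,\dots$ for $k=0,1,2,\dots$. Finally I would reassemble the integral answer: comparing these $\Zp$‑dimensions with the rational Betti numbers through universal coefficients forces torsion of rank $0$ in odd degrees and rank $\tfrac{k}{2}+1$ in even degree $k$, and I would verify that all torsion has exponent $p$ by showing the Bockstein spectral sequence degenerates (equivalently, that $\beta$ on $\Hom{*}{\open{p}}{\Zp}$ has homology of the expected rational rank). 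This gives $\Hom{k}{\open{p}}{\inte}=\inte\oplus(\Zp)^{k/2+1}$ for even $k\neq0$, $\inte$ for $k=0$, and $0$ for odd $k$.

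The main obstacle is the transgression identification $d_3(h)=\omega$ together with the vanishing of all higher differentials $d_r$ ($r\ge4$) in the relevant range; this is precisely where the Heisenberg structure is used, through the nonvanishing of the commutator pairing $x_1\cup x_2$ defining~(\ref{seq-esatta-Hp}). Once this single differential is secured the rest is formal bookkeeping. I would also keep in mind that the boundary degree $2p-3$ is genuinely exceptional, since there the permutation module $\inte[F]$ contributes nontrivial invariants, so the odd‑degree vanishing should be invoked only strictly below $2p-3$ — which suffices throughout Section~\ref{sec:h-5}.
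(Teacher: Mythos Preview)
Your approach is essentially the paper's: both run the Cartan--Leray spectral sequence $E_2^{s,t}=\Hom{s}{A_p}{\Hom{t}{U}{R}}\Rightarrow\Hom{s+t}{\open{p}}{R}$, identify the single nontrivial differential $d_3(h)$, and read off the answer from $E_4=E_\infty$. The difference is the coefficient ring. The paper works directly over $\inte$, using the presentation $\Hom{*}{A_p}{\inte}\cong\inte[x_1,x_2,y]/(y^2,px_1,px_2,py)$ with $\deg x_i=2$, $\deg y=3$, and asserts that $d_3(h)$ is a nonzero multiple of $y$; since $y^2=0$, the $E_4$-page collapses immediately to $\inte$ on the $(0,2k)$-spots and $\Hom{i}{A_p}{\inte}$ on the even part of row $0$, with no extension problems. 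You instead split into $\rat$ and $\Zp$, identify $d_3(h)=\beta(x_1x_2)=\omega$ via the obstruction to $A_p$-linearizing $\mathcal{O}(1)$, and then reassemble integrally via universal coefficients and a Bockstein argument. The two identifications agree: the reduction mod $p$ of the integral generator $y\in\Hom{3}{A_p}{\inte}$ is (a unit multiple of) $\omega$, because $y$ is the integral Bockstein of $x_1x_2$. Your justification of the differential is more conceptual than the paper's bare assertion; on the other hand, the paper's integral computation is shorter, since working over $\inte$ from the start makes the exponent-$p$ torsion automatic and eliminates your final Bockstein step.
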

\begin{proof}
  Since $A_p$ acts freely on $U$, let us consider the Cartan-Leray spectral sequence (see Section 5 or Theorem $8^{bis}.9$ in \cite{McCleary-guide}) relative to the quotient map $\quotientname: U\rightarrow \open{p}$:
  \[
    E_2^{i,j}=\Hom{i}{A_p}{\Hom{j}{U}{\inte}}\Rightarrow \Hom{i+j}{\open{p}}{\inte}.
  \]
  Let $\Sp=\mathbb{P}^{p-1}\setminus U$. 
  One sees that $\Hom{i}{U}{\inte}\cong \Hom{i}{\mathbb{P}(V)}{\inte}$ for $i<2p-3$, $\Hom{2p-3}{U}{\inte}=\inte[\Sp]^0$ and $\Hom{2p-3}{U}{\inte}$ is zero otherwise.
%   \[
%     \Hom{i}{U}{\inte}=\begin{cases}
% 			\Hom{i}{\mathbb{P}(V)}{\inte} 	& \text{ if } i<2p-3;\\
% 			\inte[\Sp]^0 			& \text{ if } i=2p-3;\\
% 			0				& \text{ if } i>2p-3.
%                       \end{cases}
%   \]
  Here $\inte[\Sp]$ is the group freely generated by the $p(p+1)$ points in $\Sp$ and $\inte[\Sp]^0$ is the kernel of the argumentation map.
  %$\inte[\Sp]^0=\left\{\sum_{x\in \Sp}a_x x: \sum_{x\in \Sp}a_x=0\right\}$.
  
%   Table \ref{tab-spectral-sequence-E-2} show the elements of $E_2^{i,j}$.
%   %
%   \begin{table}[ht]
%   \begin{center}
%   \begin{tabular}{|c||c|c|c|c|c}
%     $2p-2$ & 0 & 0 &$\cdots$ &0 &\\ \hline
%     $2p-3$ &$\Hom{0}{A_p}{\inte[\Sp]^{0}}$ &$\Hom{1}{A_p}{\inte[\Sp]^{0}}$ &$\cdots$ &$\Hom{i}{A_p}{\inte[\Sp]^{0}}$ & \\ \hline
%     $2p-4$ &$\Hom{0}{A_p}{\inte}$ & $\Hom{1}{A_p}{\inte}$ &$\cdots$ &$\Hom{i}{A_p}{\inte}$ & \\ \hline
%     $2p-5$&0 & 0 & $\cdots$ &0 & \\ \hline
%   $\vdots$&$\vdots$ & $\vdots$ & & &\\ \hline
%     $2$&$\Hom{0}{A_p}{\inte}$ & $\Hom{1}{A_p}{\inte}$ &$\cdots$ &$\Hom{i}{A_p}{\inte}$ & \\ \hline
%     $1$&0 & 0 &$\cdots$ &0 & \\ \hline
%     $0$&$\Hom{0}{A_p}{\inte}$ & $\Hom{1}{A_p}{\inte}$ &$\cdots$ &$\Hom{i}{A_p}{\inte}$ & \\ \hline \hline
%   $j\diagup i$& 0 & 1 &$\cdots$ & i & \\ \hline
%   \end{tabular}
%   \caption{$E^{i,j}_2=\Hom{i}{A_p}{\Hom{j}{U}{\inte}}$}
%   \label{tab-spectral-sequence-E-2}
%   \end{center}
%   \end{table}
  %
  %
  \noindent
  To read the $E_2^{i,j}$-terms we observe that the cohomology of $A_p$ has a $\inte$-algebra structure:
  \[
    \Hom{*}{A_p}{\inte}\cong \frac{\inte[x_1,x_2,y]}{(y^2, px_1, px_2, py)},
  \]
  where $\operatorname{deg}(x_1)=\operatorname{deg}(x_2)=2$ and $\operatorname{deg}(y)=3$. 
  Indeed the $\inte$-algebra structure comes from the Bockstein operator for $\Hom{*}{\Zp}{\pfield}$. (The reader may find a detailed proof in Appendix A of \cite{Thesis-Martino}.)
  %Indeed one first considers the well known cohomology rings $\Hom{*}{\Zp}{\inte}$ and $\Hom{*}{A_p}{\pfield}$ \cite{AdemMilgram}; then the $\inte$-algebra structure comes from the Bockstein operator for $\Hom{*}{\Zp}{\Zp}$. (The reader could find a detailed proof in \cite{Thesis-Martino}.)
  
  In this proof, we only care about the terms $E_2^{i,j}$ with $j<2p-3$ where the differential $d_2^{i,j}$ is zero.% and it does not modify $E_2^{0,2p-4}$ and $E_2^{1,2p-4}$.
  
  \noindent
  Let $h$ be the first Chern class of $O_{\mathbb{P}^{p-1}}(1)$ (hence $h^k$ generates $\Homred{2k}{\mathbb{P}^{p-1}}$). %one has $\Hom{*}{\mathbb{P}^{p-1}}{\inte}\cong \nicefrac{\inte[h]}{(h^p)}$.
  Using this notation one writes an element of $E_2^{i,2k}$ (for $2k<2p-3$) as $h^k\cdot \alpha$ where $\alpha \in \Hom{i}{A_p}{\Hom{2k}{U}{\inte}}$.
  This keeps track of the differential $\Hom{*}{\mathbb{P}^{p-1}}{\inte}$-algebra structure (see Chapter 2 in \cite{McCleary-guide}): for $2k<2p-3$, $d_3(h^{k})=k \cdot h^{k-1}d_{3}(h)$. %and then, if $d_{k}(h)=0$, $d_k(h^{2j})=0$. 
  In addition, $d_3(h)$ is a non zero multiple of $y$ in $\Hom{3}{A_p}{\inte}$. 
  Thus, the differential $d_3$ is a degree $3$ homomorphism from $\Hom{*}{A_p}{\inte}$ to itself:
  \begin{eqnarray*}
    d_3^{i,2k}:\Hom{i}{A_p}{\inte}	&\rightarrow & \Hom{i+3}{A_p}{\inte}\\
				  1&\mapsto     & \alpha y.
  \end{eqnarray*}
  for $i,k>0$.
%   
%   Thus, one computes that for $m>0$ and $0<k<p-2$, $\operatorname{ker}(d_3^{2m,2k})=0$, $\operatorname{ker}(d_3^{2m+1,2k})=\Hom{2m+1}{A_p}{\inte}$, $\operatorname{Im}(d_3^{2m,2k})=\Hom{2m+3}{A_p}{\inte}$ and $\operatorname{Im}(d_3^{2m+1,2k})=0$.
% %   \begin{eqnarray*}
% %     &&\operatorname{ker}(d_3^{2m,2k})=0;\\
% %     &&\operatorname{ker}(d_3^{2m+1,2k})=\Hom{2m+1}{A_p}{\inte};\\
% %     &&\operatorname{Im}(d_3^{2m,2k})=\Hom{2m+3}{A_p}{\inte};\\
% %     &&\operatorname{Im}(d_3^{2m+1,2k})=0.
% %   \end{eqnarray*}
%   Moreover for $k=0$, $\operatorname{ker}(d_3^{2m,0})=\Hom{2m}{A_p}{\inte}$, $\operatorname{ker}(d_3^{2m+1,0})=\Hom{2m+1}{A_p}{\inte}$, $\operatorname{Im}(d_3^{2m,0})=0$ and $\operatorname{Im}(d_3^{2m+1,0})=0$.
% %   \begin{eqnarray*}
% %     &&\operatorname{ker}(d_3^{2m,0})=\Hom{2m}{A_p}{\inte};\\
% %     &&\operatorname{ker}(d_3^{2m+1,0})=\Hom{2m+1}{A_p}{\inte};\\
% %     &&\operatorname{Im}(d_3^{2m,0})=0;\\
% %     &&\operatorname{Im}(d_3^{2m+1,0})=0.
% %   \end{eqnarray*}
%   Finally for $k>0$, $\operatorname{ker}(d_3^{0,2k})=p\inte\cong\inte$, $\operatorname{ker}(d_3^{1,2k})=0$, $\operatorname{Im}(d_3^{0,2k})=\Hom{3}{A_p}{\inte}$ and $\operatorname{Im}(d_3^{1,2k})=0$.
% %   \begin{eqnarray*}
% %     &&\operatorname{ker}(d_3^{0,2k})=p\inte\cong\inte;\\
% %     &&\operatorname{ker}(d_3^{1,2k})=0;\\
% %     &&\operatorname{Im}(d_3^{0,2k})=\Hom{3}{A_p}{\inte};\\
% %     &&\operatorname{Im}(d_3^{1,2k})=0.
% %   \end{eqnarray*}
%   
  Then, one computes that for $0\leq j <2p-4$, one has  
  \[
    E_4^{i,j}\cong\begin{cases}
		    \Hom{i}{A_p}{\inte}	& \text{ for $i$ even and $j=0$;}\\
		    \inte		& \text{ for $j\leq 2p-4$ even and $i=0$;}\\
		    %0			& \text{ for $i$ odd and $j=0$ and for $0<j<2p-4$ and $i\neq 0$.}
		    0			& \text{ otherwise}
		  \end{cases}
  \]
  and $E_4^{0,2p-4}=\inte$ and $E_4^{1,2p-4}=0$.
  
  \noindent
  Moreover, the spectral sequence degenerates, $E_{\infty}^{i,j}=E_{4}^{i,j}$, for $j=0$ with $i<2p-2$, $0<j<2p-4$ and $j=2p-4$ with $i=0,1$.
%   \begin{itemize}
%     \item $j=0$ and $i<2p-2$;
%     \item $0<j<2p-4$;
%     \item $j=2p-4$ and $i=0,1$.
%   \end{itemize}
%   So, $E_{\infty}^{i,j}=0$ if $i+j< 2p-2$ and $i,j\neq 0$; $E_{\infty}^{0,j}=\inte$ if $j<2p-3$ is even; $E_{\infty}^{0,j}=0$ if $j<2p-3$ is odd; $E_{\infty}^{i,0}=\Hom{i}{A_p}{\inte}$ if $i<2p-2$ is even; $E_{\infty}^{i,0}=0$ if $i<2p-2$ is odd.
%   \begin{itemize}
%    \item $E_{\infty}^{i,j}=0$ if $i+j< 2p-2$ and $i,j\neq 0$;
%    \item $E_{\infty}^{0,j}=\inte$ if $j<2p-3$ is even;
%    \item $E_{\infty}^{0,j}=0$ if $j<2p-3$ is odd;
%    \item $E_{\infty}^{i,0}=\Hom{i}{A_p}{\inte}$ if $i<2p-2$ is even;  
%    \item $E_{\infty}^{i,0}=0$ if $i<2p-2$ is odd.
%   \end{itemize}
  We only remark that from the $E_{\infty}$-level, one reads the information about $\Hom{*}{\open{p}}{\inte}$ via $E_{\infty}^{i,j}=\operatorname{gr}\left(\Hom{i+j}{\open{p}}{\inte}\right)$. 
\end{proof}
The following result is also true for every prime $p$.
\begin{teo}\label{theo-toroidal-singularities-articolo}
  Each singularity of $\var{p}$ is locally isomorphic to the origin of the toric affine variety $\nicefrac{\mathbb{A}^{p-1}}{\Zp}$.
  Moreover it is of the type $\frac{1}{p}(1,2,\dots, p-1)$, that is $\Zp$ acts on $\mathbb{A}^{p-1}$ diagonally via $j\mapsto \operatorname{diag}(\zeta^{j}, \zeta^{2j}, \dots, \zeta^{(p-1)j})$ where $\zeta$ is a primitive $p$-root of unity.
%   $\Zp$ acts on the $(p-1)$-dimensional torus as
%    \begin{equation}\label{eq-stabilizer-in-torus}
%    \begin{split}
%       \Zp & \hookrightarrow  (\mathbb{C}^{*})^{p-1}\\
%       j & \mapsto  (\zeta^{j}, \zeta^{2j}, \dots, \zeta^{(p-1)j});
%    \end{split}
%    \end{equation}
%    where $\zeta$ is a $p$-root of unity.
\end{teo}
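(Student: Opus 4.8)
The plan is to reduce the statement to a single linear computation—the action of the cyclic stabilizer on a tangent space—via Cartan's Lemma, and then to read off the weights of that action explicitly. Fix one of the $p+1$ singular points $y_B\in\var{p}$ produced in Theorem \ref{theo-singular-point-var-p} and choose a preimage $y\in\mathbb{P}^{p-1}$ of $y_B$ under the quotient map. By Proposition \ref{pro-nontrivial-stabilizer} and the discussion preceding Theorem \ref{theo-singular-point-var-p} the stabilizer is $\Stab{y}{A_p}=B\cong\Zp$. Cartan's Lemma then supplies an analytic isomorphism of germs $(\var{p},y_B)\cong(\nicefrac{T_y\mathbb{P}^{p-1}}{B},\bar O)$. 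Since $\dim T_y\mathbb{P}^{p-1}=p-1$ and $B\cong\Zp$ acts linearly, this already exhibits the germ as the origin of a diagonal cyclic quotient $\nicefrac{\mathbb{A}^{p-1}}{\Zp}$ (in particular an affine toric variety, compatibly with the toroidal conclusion of Theorem \ref{theo-singular-point-var-p}); the substantive point is to pin down the weights.

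Next I would describe the tangent representation. Writing $W_y\subset V$ for the line corresponding to $y$, there is a canonical $B$-equivariant identification $T_y\mathbb{P}(V)\cong\operatorname{Hom}(W_y,V/W_y)$; note that the overall scalar by which the center acts cancels in such a Hom-space, so that the $B$-action is genuinely well defined even though $\barB$ only projects isomorphically onto $B$. Restricting $\hei{p}\repr\operatorname{GL}(V)$ to $\barB\cong\Zp$ decomposes $V$ as the regular representation $V=\oplus_{\chi}V_{\chi}$, the sum running over all $p$ characters of $\Zp$ with each one-dimensional eigenspace occurring once, and $y$ corresponds to $W_y=V_{\chi_0}$ for one such $\chi_0$. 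Hence the characters of $B$ on $\operatorname{Hom}(W_y,V/W_y)=\oplus_{\chi\neq\chi_0}\operatorname{Hom}(V_{\chi_0},V_{\chi})$ are exactly the $\chi\cdot\chi_0^{-1}$ with $\chi\neq\chi_0$. Writing $\chi_k(j)=\zeta^{kj}$ for $\zeta$ a primitive $p$-th root of unity and $\chi_0=\chi_{k_0}$, these weights are $j\mapsto\zeta^{(k-k_0)j}$, and as $k$ ranges over the residues different from $k_0$ the offsets $k-k_0$ run through all of $\{1,2,\dots,p-1\}$ modulo $p$. Thus, after reordering coordinates, the generator of $B$ acts as $\operatorname{diag}(\zeta,\zeta^2,\dots,\zeta^{p-1})$, which is precisely the type $\tfrac1p(1,2,\dots,p-1)$, uniformly for every choice of singular point.

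The step I expect to be the real obstacle is justifying that $V$ restricts to the \emph{regular} representation of $\barB$, i.e. that every character of $\Zp$ occurs with multiplicity one; this is exactly where the irreducibility and the non-commutativity of $\hei{p}$ enter (it is encapsulated in the eigenspace description given before the theorem, but deserves a proof). Let $g$ generate $\barB$ and pick $h\in\hei{p}$ whose image in $A_p\cong\Zp\times\Zp$ lies outside $B$. Because the commutator pairing $A_p\times A_p\to\centro$ induced by $\hei{p}$ is nondegenerate, $[g,h]$ is a generator of the center $\centro$, which acts on the irreducible module $V$ by a primitive $p$-th root of unity $\omega$. From $h^{-1}gh=[g,h]\,g$ one gets $\representationname(h)^{-1}\representationname(g)\representationname(h)=\omega\,\representationname(g)$, so $\representationname(h)$ carries the $\lambda$-eigenspace of $\representationname(g)$ isomorphically onto its $\omega\lambda$-eigenspace. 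Iterating, all $p$ eigenspaces of $\representationname(g)$ have equal dimension; as $\dim V=p$ each is one-dimensional and the eigenvalues exhaust the $p$-th roots of unity. This yields the regular-representation decomposition used above and completes the identification of every singularity of $\var{p}$ with the origin of $\nicefrac{\mathbb{A}^{p-1}}{\Zp}$ of type $\tfrac1p(1,2,\dots,p-1)$.
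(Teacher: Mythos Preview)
Your proof is correct and follows essentially the same route as the paper: both reduce to the tangent action via Cartan's Lemma, decompose $V$ under $\barB$ into its character eigenspaces, and read off the weights on $T_y\mathbb{P}^{p-1}\cong\bigoplus_{\chi\neq\chi_0}V_{\chi}\otimes V_{\chi_0}^{\vee}$. The paper simply asserts the multiplicity-one decomposition $V=\oplus_{\chi}V_{\chi}$ (from the discussion preceding Proposition~\ref{pro-distinct-point}) and then sets $\chi_0=1$ ``without loss of generality''; your argument is more careful on both points, supplying the commutator computation that forces $V|_{\barB}$ to be the regular representation and showing directly that the offsets $k-k_0$ exhaust $\{1,\dots,p-1\}$ for any $k_0$, but this is added rigor rather than a different idea.
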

\begin{proof}
  In Theorem \ref{theo-singular-point-var-p} we have already shown that $\var{p}$ has $p+1$ zero dimensional simplicial toroidal singularities.
  We keep using the notation in the beginning of Section \ref{sec:GL23}.
  
  Let $V$ decompose as a direct sum of one dimensional irreducible representations, $V=\oplus_{\chi\in \Zp} V_{\chi}$. 
  %
  %We also know that the action of $\nicefrac{A_p}{B}$ on $(\mathbb{P}^{p-1})^B$ is regular.
  A point $P_{\chi'}$ in $(\mathbb{P}^{p-1})^B$ corresponds to some $V_{\chi'}$ for some character $\chi'$ and so $\nicefrac{V}{V_{\chi'}}=\oplus_{\chi\in \barB^{\vee}, \chi \neq \chi'} V_{\chi}$.
  
  \noindent
  The action of $\nicefrac{A_p}{B}$ on $(\mathbb{P}^{p-1})^B$ is regular and using Cartan's Lemma, the germ $(\var{p}, y_B)$ is locally isomorphic to $(\nicefrac{T_{P_{\chi'}}\mathbb{P}^{p-1}}{\Stab{P_{\chi}}{A_p}}, \bar{O})$,  where $\bar{O}$ is the image of the origin of $T_{P_{\chi'}}\mathbb{P}^{p-1}$ under the quotient map $$T_{P_{\chi'}}\mathbb{P}^{p-1}\rightarrow \nicefrac{T_{P_{\chi'}}\mathbb{P}^{p-1}}{\Stab{P_{\chi'}}{A_p}}.$$
  
  \noindent
  Without loss of generality let $\chi'=1$.
  Using those facts we obtain $T_{P_{\chi'}}\mathbb{P}^{p-1}=\bigoplus_{1\neq\chi \in  \Zp} \com e_{\chi}$.
  Thus, the action of $\Zp$ on the tangent space is given by $g\cdot v= \chi(g)v$ for any $v\in V_{\chi}$.
  Therefore $\Zp$ acts via the homomorphism $\Zp  \hookrightarrow  (\mathbb{C}^{*})^{p-1}$ sending $j$ to $(\zeta^{j}, \zeta^{2j}, \dots, \zeta^{(p-1)j})$,
%   \begin{equation*}%\label{eq-stabilizer-in-torus}
%     \begin{split}
%       \Zp & \hookrightarrow  (\mathbb{C}^{*})^{p-1}\\
%       j & \mapsto  (\zeta^{j}, \zeta^{2j}, \dots, \zeta^{(p-1)j});
%     \end{split}
%   \end{equation*}
  where $\zeta$ is a $p$-root of unity.
  %
  %This gives the statements.
%   Concretely, $(\Zp)^*=\{\chi_j,\, j\in (\Zp)^*\}$ and $\chi_j(i)=\zeta^{ij}$, where $\zeta$ is a $p$-root of unity. 
%   %
%   There remains to prove that each singularity is locally isomorphic to $\nicefrac{\mathbb{A}^{p-1}}{\Zp}$.
%   %
%   The latter is the compactification of $\nicefrac{(\com^*)^{p-1}}{\Zp}$.
\end{proof}

To proceed we need a resolution for such toroidal singularities and for this reason we have to set $p=5$. 

\noindent
We construct the toric resolution of $\nicefrac{\mathbb{A}^{4}}{\nicefrac{\inte}{5\inte}}$ using the computer algebra program \texttt{Magma}.
%
%Under a suitable linear transformation of coordinate, the following is...[se volglio ele]
The resolution is made by adding a suitable number of rays to the single cone of the fan of $\nicefrac{\mathbb{A}^{4}}{\nicefrac{\inte}{5\inte}}$ (see Exercise on page 35 of \cite{Fu}).
The new fan is denoted by $\Delta_5$ and consists of \textbf{$10$} rays and \textbf{$21$} maximal cones. 
To each ray one associated a toric divisor, $D_i=V(\operatorname{Star}(r_i))$, in the resolution of $\nicefrac{\mathbb{A}^{4}}{\nicefrac{\inte}{5\inte}}$. 
There are $6$ new rays that correspond to $6$ exceptional divisors $D_i$ for $1\leq i\leq 6$. 
Let $D$ be the union of them.
One has that if $k>3$, then $D_{i_1}\cap \dots \cap D_{i_k}=\emptyset$ (that is there exist no maximal cones of $\Delta_5$, generated only by exceptional divisors).
We will not write down the rays and the cones in $\Delta_5$; the details of this computation can be found in Chapter 6 and 7 of \cite{Thesis-Martino}. 
An example of fan of $D_1$ is in Figure \ref{fig:orderingD5}.

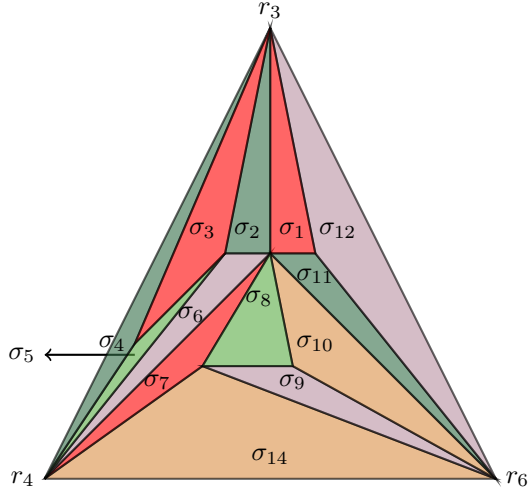
\begin{figure}[htb]
\centering
\begin{tikzpicture}[thick,scale=3]
\coordinate (4) at (0,0);
\coordinate (6) at (2,0);
\coordinate (3) at (1,2);
\coordinate (2) at (1,1);

\coordinate (1) at (0.7,0.5);
\coordinate (8) at (1.1,0.5);

\coordinate (9) at (0.8,1);
\coordinate (10) at (0.4,0.6);

\coordinate (7) at (1.2,1);

\draw[->] (0.4,0.55)--(0,0.55);

\node[left] at (4) {$r_4$};
\node[right] at (6) {$r_6$};
\node[above] at (3) {$r_3$};

%\node at (1,-0.2) {$\mathbf{D_1}$};
%\node at (1,-0.4) {the $2$-simplex having vertices $r_4$, $r_3$ and $r_6$ is $\sigma_{13}$};
\draw[fill=cof,opacity=0.6] (4) -- (1) -- (6)--cycle;
\draw[fill=red,opacity=0.6] (4) -- (1) -- (2)--cycle;
\draw[fill=pur,opacity=0.6] (4) -- (2) -- (9)--cycle;
\draw[fill=greeo,opacity=0.6] (4) -- (9) -- (10)--cycle;
\draw[fill=greet,opacity=0.6] (4) -- (10) -- (3)--cycle;

\draw[fill=pur,opacity=0.6] (6) -- (1) -- (8)--cycle;
\draw[fill=cof,opacity=0.6] (6) -- (8) -- (2)--cycle;
\draw[fill=greet,opacity=0.6] (6) -- (2) -- (7)--cycle;
\draw[fill=pur,opacity=0.6] (6) -- (7) -- (3)--cycle;

\draw[fill=red,opacity=0.6] (3) -- (7) -- (2)--cycle;
\draw[fill=greet,opacity=0.6] (3) -- (2) -- (9)--cycle;
\draw[fill=red,opacity=0.6] (3) -- (9) -- (10)--cycle;

\draw[fill=greeo,opacity=0.6] (1) -- (8) -- (2)--cycle;

\node at (1.1,1.1) {$\sigma_1$};
\node at (0.9,1.1) {$\sigma_2$};
\node at (0.7,1.1) {$\sigma_3$};
\node at (0.3,0.6) {$\sigma_4$};
\node at (-0.1,0.55) {$\sigma_5$};
\node at (0.65,0.73) {$\sigma_6$};
\node at (0.5,0.43) {$\sigma_7$};
\node at (0.95,0.8) {$\sigma_8$};
\node at (1.1,0.43) {$\sigma_9$};
\node at (1.2,0.6) {$\sigma_{10}$};
\node at (1.2,0.9) {$\sigma_{11}$};
\node at (1.3,1.1) {$\sigma_{12}$};
\node at (1,0.1) {$\sigma_{14}$};

\end{tikzpicture}
  \caption{The fan of the three dimensional toric variety $D_1$. 
  The fan is complete and we show the associated triangularization of the $2$-dimensional sphere (the simplex $r_4, r_3, r_6$ closes the sphere). 
  We show a shelling order of this triangulation that is an ordering of the maximal cone in the fan of $D_1$ satisfying property $(*)$ on page 101 of \cite{Fu}. 
  The $2$-simplex having vertices $r_4$, $r_3$ and $r_6$ is $\sigma_{13}$.}\label{fig:orderingD5}
\end{figure}

\begin{teo}\label{thm-rational-cohomology-exceptional-divisor-articolo}
  $\Hrel{5}{\varns{5}}{E}{\inte}=0$.
\end{teo}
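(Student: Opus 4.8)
The plan is to convert the local cohomology group $\Hrel{5}{\varns{5}}{E}{\inte}=\operatorname{H}^5_E(\varns{5};\inte)$ into the ordinary homology of the compact exceptional locus $E$ and then to read it off from toric combinatorics. Since $\varns{5}$ is a smooth, hence canonically oriented, complex fourfold and $E\subset\varns{5}$ is compact, Poincar\'e--Alexander--Lefschetz duality with supports gives
\[
  \Hrel{5}{\varns{5}}{E}{\inte}\;\cong\;\operatorname{H}_{8-5}(E;\inte)\;=\;\operatorname{H}_{3}(E;\inte),
\]
so it suffices to prove $\operatorname{H}_{3}(E;\inte)=0$. (This is exactly the input needed for the claim: in the support long exact sequence $\operatorname{H}^5_E(\varns{5})\to\Homred{5}{\varns{5}}\to\Homred{5}{\open{5}}$ the last term vanishes by Theorem \ref{teo-cohomology-open-set-articolo}.) Because the singular points of $\var{5}$ are isolated and pairwise distinct, $E$ is a disjoint union of six copies $E_B$ of a single exceptional fibre, so $\operatorname{H}_{3}(E;\inte)=\bigoplus_B\operatorname{H}_{3}(E_B;\inte)$ and I only need to treat one local model: the resolution of the germ $\nicefrac{\mathbb{A}^{4}}{\nicefrac{\inte}{5\inte}}$ of Theorem \ref{theo-toroidal-singularities-articolo}, whose exceptional fibre is the normal crossing union $D=D_1\cup\dots\cup D_6$ of the six toric divisors attached to the new rays of $\Delta_5$.

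Next I would compute $\operatorname{H}_{*}(D;\inte)$ with the Mayer--Vietoris (descent) spectral sequence of the closed cover $\{D_i\}$,
\[
  E^1_{p,q}=\bigoplus_{i_0<\dots<i_p}\operatorname{H}_q\!\left(D_{i_0}\cap\dots\cap D_{i_p};\inte\right)\ \Longrightarrow\ \operatorname{H}_{p+q}(D;\inte),
\]
with $d^1$ the alternating pushforward along the inclusions. The combinatorics of $\Delta_5$ records that every nonempty intersection $D_{i_1}\cap\dots\cap D_{i_q}$ is a smooth complete toric variety (Lemma \ref{lem-toric-sing-X-G}) and that all fourfold intersections are empty, so only the columns $p=0,1,2$ survive and, by Section 5.2 of \cite{Fu}, each $\operatorname{H}_q(D_I;\inte)$ is free and concentrated in even degrees $q$. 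Tracking total degree $p+q=3$ with $q$ even and $p\le 2$ leaves a single possibly nonzero term, $E^2_{1,2}$, so that $\operatorname{H}_{3}(D;\inte)=E^2_{1,2}$ is the middle homology of the Čech-type complex
\[
  \bigoplus_{i<j<k}\operatorname{H}_2(D_{ijk};\inte)\ \longrightarrow\ \bigoplus_{i<j}\operatorname{H}_2(D_{ij};\inte)\ \longrightarrow\ \bigoplus_{i}\operatorname{H}_2(D_{i};\inte).
\]
The entire theorem therefore reduces to showing that this complex is exact at its middle term, \emph{integrally} (both the free part and any possible $5$-torsion must die).

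The main obstacle is precisely this explicit step. I would feed in the $10$ rays and $21$ maximal cones of $\Delta_5$ from the \texttt{Magma} computation, use the shelling orders of the complete fans of the $D_i$ (as illustrated for $D_1$ in Figure \ref{fig:orderingD5}) to read off the second homology of every stratum $D_i$, $D_{ij}$, $D_{ijk}$, and then write down the two incidence maps of the complex above as integer matrices of classes of torus-invariant cycles. The delicate point is proving exactness at the middle with $\inte$-coefficients: every toric $2$-cycle on a double intersection $D_{ij}$ must be realized as the restriction of a $2$-cycle on some $D_i$ modulo cycles coming from triple intersections, with \emph{no residual cokernel or torsion}. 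I expect this to be the hard part, because it is exactly what rules out a spurious class of order $5$ in $\operatorname{H}_3(D)$; it is consistent with, yet not implied by, Theorem \ref{teo-cohomology-open-set-articolo}, where such $5$-torsion genuinely appears in $\open{5}$ but must fail to propagate onto the exceptional divisor. Completing the argument thus amounts to verifying, case by case over the $\Delta_5$ data, that the displayed complex is exact in the middle.
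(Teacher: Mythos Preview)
Your approach is correct and is the Poincar\'e--Alexander dual of the paper's own argument. The paper works directly with a second-quadrant spectral sequence for cohomology with supports,
\[
  E_1^{-k,i}=\bigoplus_{i_1<\dots<i_k}\Hrel{i}{\varns{5}}{D_{i_1,\dots,i_k}}{\inte}\ \Longrightarrow\ \Hrel{*}{\varns{5}}{D}{\inte},
\]
and uses the Thom isomorphism $\Hrel{i}{\varns{5}}{D_{i_1,\dots,i_k}}{\inte}\cong\Homred{i-2k}{D_{i_1,\dots,i_k}}$ in place of your Alexander duality step. Under Poincar\'e duality on each stratum ($\Homred{2}{D_{ij}}\cong\operatorname{H}_2(D_{ij})$, $\Homred{4}{D_i}\cong\operatorname{H}_2(D_i)$), the paper's relevant row coincides termwise with your complex, and the Gysin differentials become your inclusion pushforwards; so both reductions land on the same explicit linear-algebra problem.

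The one place where the paper is sharper than your plan: it does not check exactness in the middle but proves the stronger (and simpler) fact that the right-hand map
\[
  \bigoplus_{i<j}\Homred{2}{D_{ij}}\longrightarrow\bigoplus_i\Homred{4}{D_i}
\]
is \emph{injective} over $\inte$, by writing it as an explicit $16\times 13$ integer matrix in the toric cycle bases coming from shelling orders of the $\operatorname{Star}$ fans and checking that this matrix has trivial kernel. That immediately kills $E_2^{-2,6}$ (equivalently your $E^2_{1,2}$), with no need to analyse the incoming map from triple intersections. You should aim for this injectivity statement rather than exactness: it spares you one map and settles the integral question in a single rank/Smith-normal-form computation.
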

\begin{proof}
  Let $E=\resolutionname^{-1}(\Singu{\nicefrac{\mathbb{P}^{4}}{A_5}})$ be the union of exceptional divisors from the resolution of the six toroidal singularities locally isomorphic to $\nicefrac{\mathbb{A}^{4}}{\nicefrac{\inte}{5\inte}}$.
  Then $E=\sqcup_{y\in \Singu{\nicefrac{\mathbb{P}^{4}}{A_5}}}E^{(y)}$, with $E^{(y)}=\resolutionname^{-1}(y)$, and each $E^{(y)}$ is isomorphic to $D$.
  Thus $$\Hrel{*}{\varns{5}}{E}{\inte}=\bigoplus_{y \in \Singu{\nicefrac{\mathbb{P}^{4}}{A_5}}} \Hrel{*}{\varns{5}}{D}{\inte}=\Hrel{*}{\varns{5}}{D}{\inte}^{\oplus 6}.$$
  % \[
  %   \Hrel{*}{\varns{5}}{E}{\inte}=\bigoplus_{y \in \Singu{\nicefrac{\mathbb{P}^{4}}{A_5}}} \Hrel{*}{\varns{5}}{D}{\inte}=\Hrel{*}{\varns{5}}{D}{\inte}^{\oplus 6}
  % \]
  % 
  % $\Hrel{*}{\varns{5}}{E}{\inte}=\oplus_{y \in \Singu{\nicefrac{\mathbb{P}^{4}}{A_5}}} \Hrel{*}{\nicefrac{\mathbb{A}^{4}}{\nicefrac{\inte}{5\inte}}}{D}{\inte}$.
  Since we want to prove that $\Hrel{5}{\varns{5}}{E}{\inte}=0$, from now on we focus on $\Hrel{*}{\varns{5}}{D}{\inte}$. 

  We denote $D_{i_1}\cap D_{i_2}\cap \dots \cap D_{i_k}$ by $D_{i_1, i_2, \dots, i_k}$.
  To compute $\Hrel{*}{\varns{5}}{D}{\inte}$, we use the second quadrant spectral sequence 
  \[
    E_1^{-k,i}=\bigoplus_{i_1<\dots<i_k} \Hrel{i}{\varns{5}}{D_{i_1, i_2, \dots, i_k}}{\inte}\Rightarrow \Hrel{*}{\varns{5}}{D}{\inte}.
  \]
  The $E_1$-terms are defined for every $i$ and for $k>0$.  

  We first observe that $D_{i_1, i_2, \dots, i_k}$ is a smooth toric variety corresponding to the \emph{star} of the cone $\langle r_{i_1}, \dots, r_{i_k}\rangle$, so $$\Hrel{*}{\varns{5}}{D_{i_1, \dots, i_k}}{\inte}=\Hom{*-2\operatorname{dim}(\langle r_{i_1}, \dots, r_{i_k}\rangle)}{D_{i_1, \dots, i_k}}{\inte}.$$
%  
%  $\Hom{*-2\operatorname{codim}_{X_5}(D_{i_1}\cap \dots \cap D_{i_k})}{D_{i_1}\cap \dots \cap D_{i_k}}{\inte}=\Hom{*-2\operatorname{dim}(\langle r_{i_1}, \dots, r_{i_k}\rangle)}{D_{i_1}\cap \dots \cap D_{i_k}}{\inte}$.
%   \[
%     \Hrel{*}{\varns{5}}{D_{i_1}\cap \dots \cap D_{i_k}}{\inte}=\Hom{*-2\operatorname{codim}_{X_5}(D_{i_1}\cap \dots \cap D_{i_k})}{D_{i_1}\cap \dots \cap D_{i_k}}{\inte}.
%   \]
  Recalling that there are at most triple intersections, we immediately have that $E_1^{-k,i}=0$ if $k>3$.

  \noindent
  Table \ref{tab:differential-d-1-articolo} shows the $E_1$-terms of the spectral sequence and the non-zero differen\-tials.
  \begin{table}[ht]
  \begin{center}
  \begin{tabular}{ccccc||c}
  0		&					&0 		&					&0		&9\\  
  $\oplus \Homred{2}{D_{i_1}\cap D_{i_2} \cap D_{i_3}}$	&$\stackrel{d_1^{3,8}}{\rightarrow}$	& $\oplus \Homred{4}{D_{i_1}\cap D_{i_2}}$ 	&$\stackrel{d_1^{2,8}}{\rightarrow}$	& $\oplus \Homred{6}{D_{i_1}}$ 	&8\\  
  0 		&					&0 		&					&0		&7\\  
  $\oplus \Homred{0}{D_{i_1}\cap D_{i_2} \cap D_{i_3}}$ 	&$\stackrel{d_1^{3,6}}{\rightarrow}$	& $\oplus \Homred{2}{D_{i_1}\cap D_{i_2}}$ 	&$\stackrel{\mathbf{d_1^{2,6}}}{\rightarrow}$	& $\oplus \Homred{4}{D_{i_1}}$ 	&6\\  
  0 		&					&0 		&					&0		&5\\  
  0 		&					& $\oplus \Homred{0}{D_{i_1}\cap D_{i_2}}$ 	&$\stackrel{d_1^{2,6}}{\rightarrow}$	&$\oplus \Homred{2}{D_{i_1}}$ 	&4\\  
  0 		&					&0 		&					&0		&3\\  
  0 		&					&0 		&					&$\oplus \Homred{0}{D_{i_1}}$	&2\\  
  0 		&					&0 		&					&0		&1\\  
  0 		&					&0 		&					&0 		&0\\ \hline\hline 
  -3&					&-2		&					&-1		&$-k\diagdown i$
  \end{tabular}
  \end{center}
  \caption{The $E_1$-terms and the differentials $d_1$.}\label{tab:differential-d-1-articolo}
  \end{table}
  All the indexes of the direct sums run over the indicies of the exceptional divisors $D_i$. 
  All the cohomologies are integral cohomologies.
  %
  %
  %We need to investigate about the $E_1$ terms. 
  %
  %Actually we will work only with $E_1^{3,8}$, $E_1^{2,8}$, $E_1^{2,6}$ and $E_1^{1,6}$. So we focus on those.

  Now, we focus on the homomorphism %$\oplus \Homred{2}{D_{i_1, i_2}}\stackrel{\mathbf{d_1^{2,6}}}{\rightarrow}\oplus \Homred{4}{D_{i_1}}$.
  \[
    \oplus \Homred{2}{D_{i_1, i_2}}\stackrel{\mathbf{d_1^{2,6}}}{\rightarrow}\oplus \Homred{4}{D_{i_1}}.
  \]
  Let us assume that $\operatorname{Ker}(d_1^{2,6})=0$.  
  Then $E_2^{2,6}=0$ and so $E_{\infty}^{2,6}=E_2^{2,6}=0$. 
  We remark that the spectral sequence converges to $\Hrel{k+i+1}{\varns{5}}{D}{\inte}$, that is $$E_{\infty}^{k,i}=\operatorname{gr}\left(\Hrel{k+i+1}{\varns{5}}{D}{\inte} \right).$$ 
  (Since columns are counted from $k=-1$, there is a shift by one.)
  The terms of the spectral sequence involved in $\operatorname{gr}\left(\Hrel{5}{\varns{5}}{D}{\inte} \right)$ are $E_{\infty}^{-1,5}$, $E_{\infty}^{-2,6}$ and $E_{\infty}^{-3,7}$ and all of them are zero.
  Thus, $\Hrel{5}{\varns{5}}{D}{\inte}=0$.
  
  It remains to prove that $\operatorname{Ker}(d_1^{2,6})=0$.
  By using, essentially, the theorem on page 102 of \cite{Fu}, one computes from the fan $\Delta_5$ a basis $\{\tau_j^{(i_1,i_2)}\}$ for the cohomologies $\Homred{2}{D_{i_1, i_2}}$ and a basis $\{\tau_j^{(i_1)}\}$ for $\Homred{4}{D_{i_1}}$.
  Using the tools of intersection theory for toric varieties (see Chapter 5 in \cite{Fu}), one constructs the matrix of the homomorphism $d_1^{2,6}$: this is in Table \ref{tab-matrix-2-6-articolo}. 
  The details of the computation of such homomorphism can be found in Chapter 7 of \cite{Thesis-Martino}.
  This matrix has a zero dimensional kernel over $\inte$.
  \begin{table}[ht]
  \begin{center}
  \begin{tiny}
  \begin{tabular}{c|c|c|cc|c|c|c|cc|cccc}
  &$\tau_2^{5,6}$&$\tau_2^{2,4}$&$\tau_2^{2,3}$&$\tau_3^{2,3}$&$\tau_2^{1,3}$&$\tau_2^{1,4}$&$\tau_2^{1,6}$&$\tau_2^{1,5}$&$\tau_3^{1,5}$&$\tau_2^{1,2}$&$\tau_3^{1,2}$&$\tau_4^{1,2}$&$\tau_5^{1,2}$\\\hline
  $\tau_3^{(4)}$& 0&\M 1& 0& 0& 0&\M 1& 0 &0 &0 &0 &0 &0 &0\\\hline
  $\tau_3^{(6)}$&\M $\pm$1& 0& 0& 0& 0& 0&\M 1 &0 &0 &0 &0 &0 &0\\\hline
  $\tau_4^{(3)}$&0 & 0&\M 0&\M 1&\M 0& 0&0 &0 &0 &0 &0 &0 &0\\
  $\tau_5^{(3)}$&0 & 0&\M 1&\M 0&\M 1& 0&0 &0 &0 &0 &0 &0 &0\\\hline
  $\tau_4^{(5)}$&\M 0& 0& 0& 0& 0& 0& 0 &\M 0 &\M 1 &0 &0 &0 &0\\
  $\tau_5^{(5)}$&\M 1& 0& 0& 0& 0& 0& 0 &\M 1 &\M 0 &0 &0 &0 &0\\\hline

  $\tau_6^{(2)}$&0 &\M -2&\M 0&\M 0& 0& 0& 0 &0 &0 &\M 0 &\M 0 &\M 0 &\M 1\\
  $\tau_7^{(2)}$&0 &\M -1&\M 1&\M 0& 0& 0& 0 &0 &0 &\M 0 &\M 0 &\M 1 &\M 0\\
  $\tau_8^{(2)}$&0 &\M  0&\M 0&\M 0& 0& 0& 0 &0 &0 &\M 0 &\M 1 &\M 0 &\M 0\\
  $\tau_9^{(2)}$&0 &\M +1&\M 0&\M 1& 0& 0& 0 &0 &0 &\M 1 &\M 0 &\M 0 &\M 0\\\hline

  $\tau_5^{(1)}$	&0 &0 &0&0&\M 0&\M 0&\M -2 &\M 1 &\M -2 &\M 0 &\M 0 &\M 0 &\M 0\\
  $\tau_6^{(1)}$	&0 &0 &0&0&\M 0&\M 0&\M -3 &\M 0 &\M -3 &\M 0 &\M 0 &\M 0 &\M 0\\
  $\tau_{10}^{(1)}$&0 &0 &0&0&\M 0&\M -2&\M 0 &\M 0 &\M 0 &\M 0 &\M 0 &\M 0 &\M 1\\
  $\tau_{11}^{(1)}$&0 &0 &0&0&\M 1&\M -1&\M 2 &\M 0 &\M 2 &\M 0 &\M 0 &\M 1 &\M 0\\
  $\tau_{12}^{(1)}$&0 &0 &0&0&\M 0&\M 0&\M 4 &\M 0 &\M 4 &\M 0 &\M 1 &\M 0 &\M 0\\
  $\tau_{13}^{(1)}$&0 &0 &0&0&\M 0&\M 1&\M 3 &\M 0 &\M 3 &\M 1 &\M 0 &\M 0 &\M 0\\
  \end{tabular}
  \end{tiny}
  \end{center}
  \caption{The matrix of the differential $d_1^{2,6}$. We decorate columns and rows with the basis elements.}
  \label{tab-matrix-2-6-articolo}
  \end{table}
  %
  % Using the computer algebra program \texttt{Magma} we compute the rank over $\inte$ and over $\nicefrac{\inte}{5\inte}$:
\end{proof}

\begin{teo}\label{theo-cohomology-result-torsion}
    $\operatorname{tor}(\Hom{5}{\varns{5}}{\inte}) = 0$.
\end{teo}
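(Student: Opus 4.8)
The plan is to obtain the vanishing directly from the two theorems just established by feeding them into the long exact sequence in cohomology of the pair $(\varns{5},\open{5})$ --- equivalently, the long exact sequence for cohomology of $\varns{5}$ with supports in the closed subset $E$. First I would pin down the geometry. From the geometrical picture preceding this subsection, $\resolutionname\colon\varns{5}\to\var{5}$ restricts to an isomorphism $\open{5}=\nicefrac{U}{A_5}\xrightarrow{\sim}\var{5}\setminus\Singu{\var{5}}$, while $E=\resolutionname^{-1}(\Singu{\var{5}})$; moreover, by Proposition \ref{pro-nontrivial-stabilizer} together with Theorem \ref{theo-singular-point-var-p}, the non-free locus of $A_5$ on $\mathbb{P}(V)$ maps onto exactly the $p+1=6$ singular points, which are the only singularities of $\var{5}$. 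Hence $\open{5}$ is precisely the open complement $\varns{5}\setminus E$, so that $\Hrel{k}{\varns{5}}{E}{\inte}$, the cohomology with supports in $E$, coincides with the relative cohomology $\operatorname{H}^{k}(\varns{5},\open{5};\inte)$ and the pair sequence is available.

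The key step is then to isolate the degree-$5$ segment
\[
  \Hrel{5}{\varns{5}}{E}{\inte}\longrightarrow \Hom{5}{\varns{5}}{\inte}\longrightarrow \Hom{5}{\open{5}}{\inte}.
\]
By Theorem \ref{thm-rational-cohomology-exceptional-divisor-articolo} the left-hand group vanishes, and since $5$ is odd and $5<2\cdot 5-2=8$, Theorem \ref{teo-cohomology-open-set-articolo} makes the right-hand group vanish as well. Exactness then squeezes $\Hom{5}{\varns{5}}{\inte}$ between two zero groups, forcing $\Hom{5}{\varns{5}}{\inte}=0$; in particular $\operatorname{tor}(\Hom{5}{\varns{5}}{\inte})=0$, which is the assertion. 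I note that the statement only claims triviality of the torsion because that is all the computation of $\eke{3}{\hei{5}}$ in Theorem \ref{thm-ekedahl-invariants-H-p} requires, but this argument actually kills the whole group; consistency with the recurrence of Theorem \ref{thm-abelian-P-H-X} is immediate, since the free part of $\Hom{5}{\varns{5}}{\inte}$ is forced to vanish there anyway.

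I expect no genuine obstacle in this final step: the entire difficulty of the claim is carried by the two inputs --- the Cartan--Leray computation of $\Hom{*}{\open{5}}{\inte}$ in Theorem \ref{teo-cohomology-open-set-articolo} and the vanishing $\Hrel{5}{\varns{5}}{E}{\inte}=0$ of Theorem \ref{thm-rational-cohomology-exceptional-divisor-articolo}, the latter resting on the explicit toric second-quadrant spectral sequence and the injectivity over $\inte$ of the matrix $d_1^{2,6}$. The only points demanding care here are the bookkeeping ones: confirming that $\open{5}$ really is the open complement of $E$, so that the correct pair sequence is used, and checking the index constraints ($5$ odd and $5<2p-2$) that place us inside the range where Theorem \ref{teo-cohomology-open-set-articolo} is valid.
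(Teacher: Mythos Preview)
Your proposal is correct and follows essentially the same route as the paper: the long exact sequence of the pair $(\varns{5},\open{5})$ (equivalently, cohomology with supports in $E$) combined with Theorems \ref{teo-cohomology-open-set-articolo} and \ref{thm-rational-cohomology-exceptional-divisor-articolo}. Your version is in fact slightly cleaner, since you conclude $\Hom{5}{\varns{5}}{\inte}=0$ directly from the exact sequence, whereas the paper first invokes Lemma \ref{lem-technical-lemma-betti-numebers} to reduce $\Hom{5}{\varns{5}}{\inte}$ to its torsion part before killing it---an unnecessary detour given that both flanking terms already vanish.
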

\begin{proof}
Let us consider the long exact sequence
\[
  \dots \rightarrow \Hrel{*}{X_5}{E}{\inte} \rightarrow \Hom{*}{\varns{5}}{\inte} \rightarrow \Hom{*}{\open{5}}{\inte}\rightarrow \dots
\]
From Lemma \ref{lem-technical-lemma-betti-numebers}.\textbf{iii)}, we know that $\beta^{odd}(X_5)=0$ and hence $   \Hom{odd}{\varns{5}}{\inte}=\operatorname{tor}\left(\Hom{odd}{\varns{5}}{\inte}\right)$.
% \[
%    \Hom{odd}{\varns{5}}{\inte}=\operatorname{tor}\left(\Hom{odd}{\varns{5}}{\inte}\right).
% \]
% $\Hom{odd}{\varns{5}}{\inte}=\operatorname{tor}\left(\Hom{odd}{\varns{5}}{\inte}\right)$.
%
Theorem \ref{teo-cohomology-open-set-articolo} shows that $\Hom{odd}{\open{5}}{\inte}=0$ and hence we obtain
%$\dots \rightarrow \Hrel{5}{X_5}{E}{\inte} \rightarrow \operatorname{tor}(\Hom{5}{\varns{5}}{\inte}) \rightarrow 0$.
\[
 \dots \rightarrow \Hrel{5}{X_5}{E}{\inte} \rightarrow \operatorname{tor}(\Hom{5}{\varns{5}}{\inte}) \rightarrow 0.
\]
Theorem \ref{thm-rational-cohomology-exceptional-divisor-articolo} says that $\Hrel{5}{X_5}{E}{\inte}=0$ and one has $\operatorname{tor}(\Hom{5}{\varns{5}}{\inte})=0$.
\end{proof}

\noindent
\textbf{Acknowledgements}\\
%\paragraph{Acknowledgements}
I would like to thanks to Torsten Ekedahl for suggesting me this problem and for his advice.
I thank Anders Bj\"orner and Angelo Vistoli for the priceless help and great support.
I am very grateful to Fabio Tonini for his suggestions.

\noindent
Finally, my deep gratitude goes to Zinovy Reichstein for the comments on the paper. I have really appreciated the suggested improvements on Section 2. 
In particular, Proposition 2.2 has this form because of his comments.
%, and, over all, Section 2 was reshaped along her/him suggestions.

The author is fully supported by the Swiss National Science Foundation,
grant number PP00P2\_150552/1.

%============================================================
%-----------------------------
%\newpage
%\clearpage
%\scriptsize
\addcontentsline{toc}{section}{Bibliography}
\bibliographystyle{siam}
\bibliography{licentiate}

\vspace{0.5cm}

\noindent
 {\scshape Ivan Martino}\\
 {\scshape Department of Mathematics, University of Fribourg,\\ 1700 Fribourg, Switzerland}.\\
 {\itshape E-mail address}: \texttt{ivan.martino@unifr.ch}
\end{document}